\newcommand{\norm}[1]{\left\lVert #1 \right\rVert}
\numberwithin{equation}{section}
\newcommand{\be}{\begin{eqnarray}}
\newcommand{\ee}{\end{eqnarray}}
\newcommand{\ce}{\begin{eqnarray*}}
\newcommand{\de}{\end{eqnarray*}}
\newtheorem{theorem}{Theorem}[section]
\newtheorem{lemma}[theorem]{Lemma}
\newtheorem{remark}[theorem]{Remark}
\newtheorem{definition}[theorem]{Definition}
\newtheorem{proposition}[theorem]{Proposition}
\newtheorem{Examples}[theorem]{Example}
\newtheorem{corollary}[theorem]{Corollary}
\newtheorem{assumption}{Assumption}[section]
\def\e{{\mathrm{e}}}
\def\e{\epsilon}
\def\[{{\Big[}}
\def\]{{\Big]}}
\def\<{{\langle}}
\def\>{{\rangle}}
\def\({{\Big(}}
\def\){{\Big)}}
\def\bx{{\mathbf{x}}}
\def\dif{{\mathord{{\rm d}}}}
\def\={&\!\!=\!\!&}
\def\bt{\begin{theorem}}
\def\et{\end{theorem}}
\def\bl{\begin{lemma}}
\def\el{\end{lemma}}
\def\br{\begin{remark}}
\def\er{\end{remark}}
\def\bas{\begin{assumption}}
\def\eas{\end{assumption}}
\def\bd{\begin{definition}}
\def\ed{\end{definition}}
\def\bp{\begin{proposition}}
\def\ep{\end{proposition}}
\def\bc{\begin{corollary}}
\def\ec{\end{corollary}}
\def\bx{\begin{Examples}}
\def\ex{\end{Examples}}
\def\cB{{\mathcal B}}
\def\cF{{\mathcal F}}
\def\mD{{\mathbb D}}
\def\mE{{\mathbb E}}
\def\mF{{\mathbb F}}
\def\mN{{\mathbb N}}
\def\mR{{\mathbb R}}
\def\mW{{\mathbb W}}
\def\geq{\geqslant}
\def\leq{\leqslant}
\title[LDP and MDP for DDSPDEs with jumps]{\bf{Large and moderate deviation principles for McKean-Vlasov SDEs with jumps}}
\author{Wei Liu}
\address{Wei Liu, School of Mathematics and Statistics, Wuhan University, Wuhan, Hubei 430072, PR China;
Hubei Key Laboratory of Computational Science, Wuhan University, Wuhan, Hubei 430072, P.R. China.}
\thanks{Wei Liu is supported by NSFC (No. 12071361, 11731009), the Fundamental Research Funds for the Central Universities
(No. 2042020kf0031, 2042020kf0217).}
\email{wliu.math@whu.edu.cn}
\author{Yulin Song}
\address{Yulin Song, Department of Mathematics, Nanjing University, Nanjing, 210093, P.R. China}
\thanks{Yulin Song is supported by NSFC (No. 11971227, 11790272)}
\email{ylsong@nju.edu.cn}
\author{Jianliang Zhai}
\address{Jianliang Zhai, CAS Key Laboratory of Wu Wen-Tsun Mathematics, School of Mathematical Science, University of Science and Technology of China, Hefei 230026, P.R. China}
\thanks{Jianliang Zhai's research is supported by NSFC (No. 11971456, 11671372, 11721101), School Start-up Fund (USTC) KY0010000036, the Fundamental Research Funds for the Central Universities (No. WK3470000016).}
\email{zhaijl@ustc.edu.cn}
\author{Tusheng Zhang}
\address{Tusheng Zhang, School of Mathematics, University of Manchester, Oxford Road, Manchester, M13 9PL, UK}
\thanks{}
\email{Tusheng.Zhang@manchester.ac.uk}
\date{\today}
\begin{document}

\maketitle

\begin{abstract}
\noindent
In this paper, we consider McKean-Vlasov stochastic differential equations (MVSDEs) driven by L\'evy noise. By identifying the right equations satisfied by the solutions of the MVSDEs with  shifted driving L\'evy noise, we build up a framework to fully apply the weak convergence method to establish large and moderate deviation principles for MVSDEs. In the case of ordinary SDEs, the rate function is calculated by using the solutions of the corresponding skeleton equations simply replacing the noise by the elements of the Cameron-Martin space. It turns out that the correct rate function for MVSDEs is defined through the solutions of skeleton equations replacing the noise by smooth functions and replacing the distributions involved in the equation by the distribution of the solution of the corresponding deterministic equation (without the noise). This is somehow surprising. With this approach, we obtain large and moderate deviation principles for much wider  classes of MVSDEs in comparison with the existing literature.

\vskip0.5cm \noindent{\bf Keywords:} Large deviation, moderate deviation, weak convergence method, McKean-Vlasov equation, L\'evy noise.\vspace{1mm}\\
\noindent{{\bf MSC 2010:} 60F10; 60H10; 60H15; 60J75; 37L55.}
\end{abstract}
\tableofcontents

\section{Introduction}
The purpose  of this paper is to establish large and moderate deviation principles for the following McKean-Vlasov stochastic differential equations (MVSDEs) driven by  L\'evy noise as the parameter $\epsilon$ tends to $0$,
\begin{align}\label{MVSDE-BLE zhai 3}
\dif X^{\epsilon}(t)
=&b_\epsilon(t,X^{\epsilon},\mu^{\epsilon})\dif t
+\sqrt{\epsilon}\sigma_\epsilon(t,X^{\epsilon},\mu^{\epsilon})\dif W(t)\nonumber\\
&+\epsilon \int_Z G_\epsilon(t,X^{\epsilon},\mu^{\epsilon},z)\widetilde{N}^{\epsilon^{-1}}(\dif z,\dif t),\ t\in[0,T], \epsilon\in(0,1],
\end{align}
where $X^\epsilon:=\{X^\epsilon(s),s\in[0,T]\}$, $\mu^{\epsilon}$ is the law of $X^{\epsilon}$, $W$ is a Brownian motion (BM in short), $N^{\epsilon^{-1}}$ is a Poisson random measure (PRM  in short) on $[0,T]\times Z$ with a $\sigma$-finite intensity measure $\epsilon^{-1}{{\rm Leb}}_T\otimes\nu$, and $\widetilde{N}^{\epsilon^{-1}}([0,t]\times B)=N^{\epsilon^{-1}}([0,t]\times B)-\epsilon^{-1}t\nu(B)$, $\forall B\in \mathcal{B}(Z)$ with $\nu(B)<\infty$, is the compensated PRM.
The precise assumptions on (\ref{MVSDE-BLE zhai 3}) will be given in Section 2. We notice that $b_\epsilon,~\sigma_\epsilon,~G_\epsilon$ are functionals of the path $X^\epsilon$ and the law of $X^\epsilon$. (\ref{MVSDE-BLE zhai 3}) in particular includes the MVSDE:
\begin{align}\label{MVSDE-BLE zhai 3-1}
\dif X^{\epsilon}(t)
=&b_\epsilon(t,X^{\epsilon}(t),\mu^{\epsilon}_t)\dif t
+\sqrt{\epsilon}\sigma_\epsilon(t,X^{\epsilon}(t),\mu^{\epsilon}_t)\dif W(t)\nonumber\\
+&\epsilon \int_Z G_\epsilon(t,X^{\epsilon}(t),\mu^{\epsilon}_t,z)
\widetilde{N}^{\epsilon^{-1}}(\dif t,\dif z),\ t\in[0,T], \epsilon\in(0,1],
\end{align}
where $b_\epsilon(t, \cdot,\cdot),~\sigma_\epsilon(t,\cdot,\cdot)$ and $~G_\epsilon(t,\cdot,\cdot,z)$ depend only on the value and the law of the process $X^{\epsilon}$ at time  $t$. We stress  that the general framework (\ref{MVSDE-BLE zhai 3}) covers stochastic differential equations (SDEs),  stochastic partial differential equations (SPDEs), and SDES/SPDEs with delay/memory etc.

MVSDEs were first suggested by Kac \cite{[K1],[K2]} as a stochastic toy model for the Vlasov kinetic equation of plasma, and then introduced by Mckean \cite{[MC]} to model plasma dynamics. These equations describe limiting behaviors
of individual particles in an interacting particle system of mean-field type when the number of particles goes to infinity (so-called propagation of chaos). For this reason MVSDEs are also referred as mean-field SDEs. The theory and applications of MVSDEs and associated interacting particle systems have been extensively studied by a large number of researchers under various settings due to their wide range of applications in several fields, including physics, chemistry, biology, economics, financial mathematics etc. One can refer to \cite{[BLPR],[HSS],[HW],[MV],[RZ]} and the references therein for the existence and uniqueness of solutions to MVSDEs, \cite{[ADF],[DEGZ],[JW],[MS],[MSSZ],[SZ]} for propagation of chaos,  \cite{[EA],[EGZ],[GLWZ1],[GLWZ2],[HS2020],[LMW],[LWZ],[Mal1],[Mal2],[Song]} for exponential ergodicity and functional inequalities, \cite{ARRST,[RST],[HIP], [LW],[SY]} and the references therein for large deviation principles.


On the other hand, real world  models in finance, physics, biology, etc.,
 sometimes can not be well represented by Gaussian noise. And from the point of view of particle systems, in many scenarios, the individual particles and the related whole population will demonstrate some sudden jumps. {L\'evy-type perturbations come to the stage to reproduce the performance of these natural phenomena, and a PRM is a good and natural model to express these jumps. Thus,  it is natural to consider MVSDEs driven by BM and PRM as follows:
\begin{equation}\label{MVSDE-BLE}
\dif X(t)=b(t,X(t),\mu_t)\dif t+\sigma(t,X(t),\mu_t)\dif W(t)
+\int_Z G(t,X(t-),\mu_t,z)\widetilde{N}(\dif z, \dif t).
\end{equation}

Compared with the MVSDEs driven by BM, the MVSDEs with L\'evy noise have been much less studied. Recently in  \cite{[JMW]}, Jourdain et al. studied the existence, uniqueness and particle approximations for MVSDEs driven by L\'evy noise. In analogy to the case of Gaussian noise (see \cite{[BM],[HRW]}), nonlinear and nonlocal integral Fokker-Planck PDEs can be related to  MVSDEs with L\'evy noise (see \cite{[HL],[JMW],[Li]}).

Large and moderate deviation principles can provide an exponential estimate for tail probability (or the probability of a rare event) in terms of some explicit rate function. In the case of stochastic processes, the heuristics underlying large and moderate deviations theory is to identify a deterministic path around which the diffusion is concentrated with overwhelming probability, so that the stochastic motion can be seen as a small random perturbation of this deterministic path.


Large and moderate deviation principles for classical stochastic evolution equations and SPDEs driven by BM and/or PRM have been extensively investigated in recent years. Among the approaches to deal with these problems, the weak convergence method based on a variational representation for positive measurable functionals of a BM and/or PRM (see \cite{BDG,Budhiraja-Dupuis-Maroulas.,BD 2000, BDM 2008,Budhiraja-Chen-Dupuis}) is proved to be a powerful tool to establish large and moderate deviation principles for various dynamical systems driven by Gaussian noise and/or PRM. The reader is referred to \cite{BDG,Budhiraja-Dupuis-Maroulas.,BD 2000, BDM 2008,Budhiraja-Chen-Dupuis,BPZ,Brz+Manna+Zhai_2018,BGT 2017,DXZZ 2017, DZZ 2017,DWZZ 2020,MSZ,RZ 2008,WZZ 2015,Yang-Zhai-Zhang,Zhai-Zhang} and the references therein.
The key components of the variational representation are the controlled BM and the controlled PRM. The controlled BM basically shifts the mean, while the controlled PRM plays the role of a thinning function.
We  refer to \cite{BD2019} for an excellent review of the advances on the weak convergence method during the past decade.

Assume that there is a unique strong solution $X^\epsilon$ to MVSDE (\ref{MVSDE-BLE zhai 3}). Then, there exists a measurable map  $\mathcal{G}^\epsilon$ such that the solution $X^\epsilon$ can be represented as
\begin{eqnarray}\label{eq zhai 1}
X^\epsilon=\mathcal{G}^\epsilon(\sqrt{\epsilon}W,\epsilon N^{\epsilon^{-1}}).
\end{eqnarray}
One key step to establish the LDP is to prove the weak convergence of the perturbations   $X^{\epsilon,u_\epsilon}:=\mathcal{G}^\epsilon(\sqrt{\epsilon}W+\int_0^\cdot\phi_\epsilon(s)\dif s,\epsilon N^{\epsilon^{-1}\psi_\epsilon})$ as $\epsilon\rightarrow 0$, here $u_\epsilon=(\phi_\epsilon,\psi_\epsilon)$.  It is therefore important to identify the correct equation satisfied by $X^{\epsilon,u_\epsilon}$ in this setting. It would be natural to think that
$X^{\epsilon,u_\epsilon}$ is the solution to
the following controlled SDE:
\begin{align}\label{MVSDE-BLE zhai 1}
\dif X^{\epsilon,u_\epsilon}(t)
=&b_\epsilon(t,X^{\epsilon,u_\epsilon},\mu^{\epsilon,u_\epsilon})\dif t
+\sqrt{\epsilon}\sigma_\epsilon(t,X^{\epsilon,u_\epsilon},\mu^{\epsilon,u_\epsilon})\dif W(t)
+\sigma_\epsilon(t,X^{\epsilon,u_\epsilon},\mu^{\epsilon,u_\epsilon})\phi_\epsilon (t)\dif t\nonumber\\
&+\int_Z G_\epsilon(z,X^{\epsilon,u_\epsilon},\mu^{\epsilon,u_\epsilon},z)
\Big(
\epsilon N^{\epsilon^{-1}\psi_\epsilon}(\dif s,\dif z)-\nu(\dif z)\dif s
\Big)
,\ \ \ t\in[0,T],
\end{align}
where $\mu^{\epsilon,u_\epsilon}$ is the distribution of $X^{\epsilon,u_\epsilon}$.
\vskip 0.3cm
Indeed, this was the claim in \cite{[CHM]}, in which Cai et al. attempted to apply fully weak convergence method to obtain the LDP for MVSDEs with L\'evy noise. Unfortunately this claim is wrong, which leads to a wrong rate function of the LDP.
\vskip 0.3cm
One of the contributions of this paper is to find the {\it correct equation} satisfied by  $X^{\epsilon,u_\epsilon}$. In fact we find out that  $X^{\epsilon,u_\epsilon}$ is actually the  unique solution to the following SDE:
\begin{align}\label{MVSDE-BLE zhai 2}
\dif X^{\epsilon,u_\epsilon}(t)
=&b_\epsilon(t,X^{\epsilon,u_\epsilon},\mu^{\epsilon})\dif t
+\sqrt{\epsilon}\sigma_\epsilon(t,X^{\epsilon,u_\epsilon},\mu^{\epsilon})\dif W(t)
+\sigma_\epsilon(t,Y^{\epsilon,u_\epsilon},\mu^{\epsilon})\phi_\epsilon (t)\dif t\nonumber\\
&+\int_Z G_\epsilon(z,X^{\epsilon,u_\epsilon},\mu^{\epsilon},z)
\Big(\epsilon N^{\epsilon^{-1}\psi_\epsilon}(\dif s,\dif z)-\nu(\dif z)\dif s\Big)
,\ \ \ t\in[0,T],
\end{align}
where $\mu^{\epsilon}$ is the distribution of the solution $X^{\epsilon}$ to (\ref{MVSDE-BLE zhai 3}). The reason is that when perturbing the BM and PRM in the arguments of the map  $\mathcal{G}^\epsilon(\cdot, \cdot)$, $\mu^{\epsilon}$ is already deterministic and hence it is not affected by the perturbation, as the following example shows.


\bx
Consider the following simple MVSDE:
\begin{eqnarray}\label{eq simple EX}
X^{\epsilon}(t)=x_0+\int_0^t \mathbb{E}(X^{\epsilon}(s))\dif s
+\sqrt{\epsilon}W(t),\ t\in[0,T], \epsilon\in(0,1].
\end{eqnarray}
Here $x_0\in\mathbb{R}$, $W$ is a one-dimensional BM.
Due to the existence and uniqueness of the strong solution, there exists a map $\mathcal{G}^\epsilon$ such that $X^{\epsilon}=\mathcal{G}^\epsilon(\sqrt{\epsilon}W)$. To see the equation satisfied by $Y^\epsilon:=\mathcal{G}^\epsilon(\sqrt{\epsilon}W+\int_0^\cdot\phi_\epsilon (s)\dif s)$, we take  expectation on both sides of (\ref{eq simple EX}) to get
\begin{eqnarray*}
\mathbb{E}(X^{\epsilon}(t))=x_0+\int_0^t \mathbb{E}(X^{\epsilon}(s))\dif s,\ t\in[0,T], \epsilon\in(0,1].
\end{eqnarray*}
Hence $\mathbb{E}(X^{\epsilon}(t))=x_0e^t$.
Thus we have
$$
X^{\epsilon}(t)=
x_0+\int_0^t x_0e^s\dif s+\sqrt{\epsilon}W(t)=\mathcal{G}^\epsilon(\sqrt{\epsilon}W)(t),\ t\in[0,T], \epsilon\in(0,1].
$$
Therefore, $Y^\epsilon:=\mathcal{G}^\epsilon(\sqrt{\epsilon}W+\int_0^\cdot\phi_\epsilon (s)\dif s)$ is the solution of the equation:
\begin{align}
Y^{\epsilon}(t)
=&x_0+\int_0^t x_0e^s\dif s+\sqrt{\epsilon}W(t)+\int_0^t\phi_\epsilon (s)\dif s\nonumber\\
=& x_0+\int_0^t\mathbb{E}[X^{\epsilon}(s)]\dif s+\sqrt{\epsilon}W(t)
+\int_0^t\phi_\epsilon (s)\dif s                                       ,\ t\in[0,T], \epsilon\in(0,1].
\end{align}
$Y^{\epsilon}$ does {\bf NOT} satisfy the following controlled SDE:
\begin{eqnarray}\label{eq simple EX Y}
Y^{\epsilon}(t)
=x_0+\int_0^t \mathbb{E}(Y^{\epsilon}(s))\dif s
+\sqrt{\epsilon}W(t)+\int_0^t\phi_\epsilon (s)\dif s,\ t\in[0,T], \epsilon\in(0,1].
\end{eqnarray}

\ex




Large and moderate deviations for  MVSDEs and MVSPDEs, especially driven by L\'evy noise, have been much less studied. For the MVSDE \eqref{MVSDE-BLE zhai 3} without jumps,  Herrmann et al. \cite{[HIP]} obtained the large deviation principle (LDP) in path space equipped with the uniform norm, assuming  the  superlinear growth of the drift but imposing coercivity condition, and a constant diffusion coefficient. Dos Reis et al. \cite{[RST]} obtained LDPs in path space topologies under the assumption that  coefficients $b$ and $\sigma$ have some extra regularity with respect to time. The approach in \cite{[HIP]} and \cite{[RST]} is to  first replace the distribution $\mu_t^{\epsilon}$ of $X_t^{\epsilon}$ in the coefficients with a Dirac measure $\delta_{X^0(t)}$, where $X^0$ is the solution to the following ordinary differential equation
\begin{equation}\label{MVSDE-ODE}
\dif X^0(t)= b(t,X^0(t),\delta_{X^0(t)})\dif t,
\end{equation}
 and then to use discretization, approximation and exponential equivalence arguments.
 Carrying out similar arguments, Adams et al. \cite{ARRST} studied a class of reflected McKean-Vlasov diffusions.
 These techniques
 require more stringent conditions on the coefficients.

Recently Suo and Yuan \cite{[SY]} obtained a moderate deviation principle (MDP) for MVSDEs driven by BM, assuming the  Lipschitz conditions on coefficients $b,\sigma$ and on the Lyons derivative of the coefficients $b$.
They used the weak convergence approach to first establish the MDP for the following SDEs
\begin{equation}\label{eq Y zhai}
\dif Y^{\epsilon}(t)
=b_\epsilon(t,Y^{\epsilon}(t),\delta_{X^0(t)})\dif t
+\sqrt{\epsilon}\sigma_\epsilon(t,Y^{\epsilon}(t),\delta_{X^0(t)})\dif W(t),\ \ \ t\in[0,T],
\end{equation}
where $X^0$ is given in \eqref{MVSDE-ODE}, and then proved the exponential equivalence of $X^{\epsilon}$ and $Y^{\epsilon}$. However stronger conditions on the coefficients are required in this approach.

\vskip 0.5cm
 One of the main contributions in this article  is the identification of  the correct controlled equations for MVSDEs when perturbing the driving BM and PRM.  This is  the key for us to fully use the  weak convergence method to establish LDPs and MPDs, which leads to the very natural Lipschitz conditions on the coefficients without the extra assumptions appearing in the literatures mentioned above.  The  discretization and approximation techniques can not be applied to the case of L\'evy driving noise and also require stronger conditions on the coefficients even in the Gaussian case.

\vskip 0.4cm
The paper is organized as follows. In Section 2, we introduce notations, cylindrical Brownian motion, Poisson random measures as well as various  associated spaces. In Section 3, we introduce the setup for McKean-Vlasov stochastic differential equations on Banach spaces. The framework is sufficiently general to include also SPDEs. In Section 4, we formulate two abstract results on large deviation and moderate deviation principles for the MVSDEs. In Section 5, we consider MVSDES in $\mathbb{R}^d$. We obtain a large deviation principle and a moderate deviation principle for the MVSDEs under much weaker conditions than the one existing in the literature.

\section{Framework}
Set $\mathbb{N}:=\{1,2,3,\cdots\}$, $\mathbb{R}:=(-\infty,\infty)$ and $\mathbb{R}_+:=[0,\infty)$. For a metric space $S$, the Borel $\sigma$-field on $S$ will be written as
$\mathcal{B}(S)$. We denote by $C_c(S)$ the space of real-valued continuous functions with compact supports. Let $C([0,T],S)$ be the space of continuous functions $f:[0,T]\rightarrow S$ endowed with the uniform convergence topology. Let $D([0,T],S)$ be the space of all c\`adl\`ag functions $f:[0,T]\rightarrow S$ endowed with the Skorokhod topology. For an S-valued measurable map
$X$ defined on some probability space $(\Omega,\mathcal{F},P)$ we will denote the measure induced by $X$ on $(S,\mathcal{B}(S))$ by $Law(X)$.
For a measurable space $(U,\mathcal{U})$, let $Pr(U)$ be the class of probability measures on this space. We use the symbol $``\Rightarrow"$
to denote the convergence in distribution.

For a locally compact Polish space $S$, the space of all Borel measures on $S$ is denoted by $M(S)$, and $M_{FC}(S)$ denotes the set of all  $\mu\in M(S)$ with $\mu(O)<\infty$
for each compact subset $O\subseteq S$. We endow $M_{FC}(S)$ with the weakest topology such that for each $f\in C_c(S)$ the mapping
$\mu\in M_{FC}(S)\rightarrow \int_Sf(s)\mu(\dif s)$ is continuous. This topology is metrizable such that $M_{FC}(S)$ is a Polish space, see \cite{Budhiraja-Dupuis-Maroulas.} for more details.

We fix $T>0$ throughout this paper. Let $K$ be a separable Hilbert space with inner product $\langle\cdot,\cdot\rangle_K$  and norm $\|\cdot\|_K$. Assume that $Z$ is a locally compact Polish space with a $\sigma$-finite measure $\nu\in M_{FC}(Z)$. The  probability space $(\Omega, \cF, {\mathbb F}:=\{\cF_t\}_{t\in [0,T]},P)$ is specified as
\begin{align*}
  \Omega:=C\big([0,T],K\big)\times M_{FC}\big([0,T]\times Z
  \times \mathbb{R}_+\big),\qquad \cF:=\cB(\Omega).
\end{align*}
We introduce the coordinate mappings
\begin{align*}
&W\colon \Omega \rightarrow C\big([0,T],K\big),
\qquad W(\alpha,\beta)(t)=\alpha(t),\ t\in[0,T];\\
& N\colon \Omega \rightarrow M_{FC}\big([0,T]\times Z\times \mathbb{R}_+\big),\qquad  N(\alpha,\beta)=\beta.
\end{align*}
Define for each $t\in [0,T]$ the $\sigma$-algebra
\begin{align*}
\mathcal{G}_{t}:=\sigma\left(\left\{\big(W(s), \, N((0,s]\times A)\big):\,
0\leq s\leq t,\,A\in \mathcal{B}\big(Z\times \mathbb{R}_+\big)\right\}\right).
\end{align*}
For the given $\nu$, it follows from \cite[Sec.I.8]{Ikeda-Watanabe} that there exists a unique probability measure $P$
 on $(\Omega,\mathcal{F})$ such that:
\begin{enumerate}
\item[(a)] $W$ is a $K$-cylindrical BM;
\item[(b)] $N$ is a PRM on $[0,T]\times Z\times\mathbb{R}_+$ with intensity measure $\text{Leb}_T\otimes\nu\otimes \text{Leb}_\infty$, where
$\text{Leb}_T$ and $\text{Leb}_\infty$ stand for the Lebesgue measures on $[0,T]$ and $\mathbb{R}_+$ respectively;
\item[(c)] $W$ and $N$ are independent.
\end{enumerate}
We denote by $\mathbb{F}:=\{{\mathcal{F}}_{t}\}_{t\in[0,T]}$ the
$P$-completion of $\{\mathcal{G}_{t}\}_{t\in[0,T]}$ and
$\mathcal P$ the $\mathbb{F}$-predictable $\sigma$-field
on $[0,T]\times \Omega$. The cylindrical BM $W$ and the PRM $N$
will be defined on  the (filtered) probability space $(\Omega, \cF, \mathbb{F}:=\{\cF_t\}_{t\in [0,T]},P)$.
The corresponding compensated PRM will be denoted by $\widetilde{N}$.

Denote
\begin{align*}
{\mathcal R_+}
=\left\{\varphi\colon [0,T]\times \Omega\times Z\to \mathbb{R}_+: \varphi\ \text{is}
\, (\mathcal{P}\otimes\mathcal{B}(Z))/\mathcal{B}(\mathbb{R}_+)\text{-measurable}\right\}.
\end{align*}
For any $\varphi\in{\mathcal R_+}$, $N^{\varphi}:\Omega\rightarrow M_{FC}([0,T]\times Z)$ is  a
counting process  on $[0,T]\times {Z}$ defined by
   \begin{align}\label{Jump-representation}
      N^\varphi((0,t]\times A)=\int_{(0,t]\times A\times \mathbb{R}_+}1_{[0,\varphi(s,z)]}(r)\, N(\dif s, \dif z, \dif r),\ 0\le t\le T,\ A\in\mathcal{B}(Z).
   \end{align}
$N^\varphi$ can be viewed as a controlled random measure, with $\varphi$ selecting the intensity.

Analogously, $\widetilde{N}^\varphi$ is defined by replacing $N$ with $\widetilde{N}$ in (\ref{Jump-representation}). When $\varphi\equiv c>0$, we write $N^\varphi=N^c$ and $\widetilde{N}^\varphi=\widetilde{N}^c$.



\vskip 0.3cm
For each $f\in L^2([0,T],K)$, we introduce the quantity
\begin{align*}
Q_{1}(f)
:=\frac{1}{2}\int_{0}^{T}\norm{f(s)}_{K}^{2}\,\dif s,
\end{align*}
and for each $m>0$, denote
\begin{align*}
 S_1^m:=\Big\{f\in L^{2}([0,T],K):\,Q_1(f)\leq m\Big\}.
\end{align*}
Equipped with the weak topology, $S_1^m$ is a compact subset of $L^2([0,T],K).$

For each measurable function
$g\colon [0,T]\times Z\to [0,\infty)$, define
\begin{align*}
Q_2(g):=\int_{[0,T]\times Z}\ell\big(g(s,z)\big) \, \nu(\dif z)\dif s,
\end{align*}
where $\ell(x)=x\log x-x +1,\ \ell(0):=1$.
For each $m>0$, denote
\begin{align*}
     S_2^m:=\Big\{g:[0,T]\times Z\rightarrow[0,\infty):\,Q_2(g)\leq m\Big\}.
\end{align*}
Any measurable function $g\in S_2^m$ can be identified with a measure $\hat{g}\in M_{FC}([0,T]\times Z)$, defined by
   \begin{align}\label{eq.corres-func-meas}
      \hat{g}(A)=\int_A g(s,z)\, \nu(\dif z)\dif s,\ \forall A\in\mathcal{B}([0,T]\times Z).
   \end{align}
This identification induces a topology on $S_2^m$ under which $S_2^m$ is a compact space (see the Appendix of \cite{Budhiraja-Chen-Dupuis}).

Denote
\begin{eqnarray}\label{eq P22}
S:= \bigcup_{m\in\mathbb{N}}\Big\{S_1^m\times S_2^m\Big\},
\end{eqnarray}
and equip it with the usual product topology.

For any $m\in(0,\infty)$, let $\mathcal{S}_1^m$ be a space of stochastic processes on $\Omega$ defined by
\begin{align*}
\mathcal{S}_1^m:=\{\varphi\colon [0,T]\times
\Omega\to K:\, {\mathbb{F}}\text{-predictable and} \, \varphi(\cdot,\omega)\in S_1^m
\text{ for $P$-a.e. $\omega\in \Omega$}\}.
\end{align*}

Let $\{Z_n\}_{n\in\mN}$ be a sequence of compact sets satisfying that $Z_n\subseteq Z$
and $ Z_n \nearrow Z$.  For each $n\in\mN$, let
\begin{align}\label{Eq Rn}
     \mathcal{R}_{b,n}
= \Big\{\psi\in \mathcal{R}_+:
\psi(t,z,\omega)\in \begin{cases}
 [\tfrac{1}{n},n], &\text{if }z\in Z_n\\
\{1\}, &\text{if }z\in Z_n^c
\end{cases}
\text{ for all }(t,\omega)\in [0,T]\times \Omega
\Big\},
\end{align}
and $\mathcal{R}_{b}=\bigcup _{n=1}^\infty \mathcal{R}_{b,n}$.
For any $m\in(0,\infty)$, let $\mathcal{S}_2^m$ be a space of stochastic processes on $\Omega$ defined by
\begin{align*}
\mathcal{S}_2^m:=\{\psi\in  \mathcal{R}_{b}:\, \psi(\cdot,\cdot,\omega)\in S_2^m
\text{ for $P$-a.e. $\omega\in \Omega$}\}.
\end{align*}

\section{McKean-Vlasov SDES}
Now we are in the position to introduce the framework of distribution-dependent SDEs on Banach spaces. Our framework is sufficiently general to also include SPDEs.

Let $H$ be a separable Hilbert space with inner product $\langle\cdot,\cdot\rangle_H$  and norm $\|\cdot\|_H$. Let $V$ and $E$ be separable
Banach spaces with norms $\|\cdot\|_V$ and $\|\cdot\|_E$ such that
$$
V\subset H\subset E
$$
continuously and densely.
\vskip 0.3cm
In the setting of SPDEs, people often take $V=H_0^{1,2}(D)$, $H=L^2(D)$ and $E=H_0^{-1,2}(D)$, where $D$ is an open domain in $\mathbb{R}^d$, $H_0^{1,2}(D)$ is the Sobolev space of order one, $H_0^{-1,2}(D)$ is the dual space of $H_0^{1,2}(D)$.

\vskip 0.3cm
$\|\cdot\|_V$ is extended to a function on $H$ by setting $\|x\|_V:=\infty$ if $x\in H\setminus V$. Note that this extension is $\cB(H)$-measurable and lower semi-continuous. Hence, the following
path space is well defined:
$$
\mathbb{D}:=\{x\in D([0,T],H):\ \int_0^T\|x(t)\|_V\dif t<\infty\}
$$
endowed with the metric
$$
d(x_1,x_2):=\int_0^T\|x_1(t)-x_2(t)\|_V\dif t+d_T(x_1,x_2),
$$
where $d_T(x_1,x_2)$ is the Skorokhod distance on $D([0,T],H)$. It is easy to see that $(\mathbb{D},d)$ is a Polish space.

Let $\mathcal{L}_2(K,H)$ be the space of all Hilbert-Schmidt operators from $K$ to $H$ equipped with the usual Hilbert-Schmidt norm $\|\cdot\|_{\mathcal{L}_2}$. Denote by $\mathcal{B}_t(\mD)$ the $\sigma$-algebra generated by all maps $\pi_s:\mD\rightarrow H,\ s\in[0,t]$, where $\pi_s(w):=w(s),\ w\in\mD$.


\bas\label{as1} Throughout this paper we always assume that, for any fixed $J\in Pr(D([0,T],H))$:
\begin{itemize}
\item $b(\cdot,\cdot,J): [0,T]\times \mD\rightarrow E\text{ is }\mathcal{B}([0,T])\otimes\mathcal{B}(\mD)/\mathcal{B}(E)\text{-measurable};
\label{page 5}
$
\item $\sigma(\cdot,\cdot,J): [0,T]\times \mD\rightarrow \mathcal{L}_2(K,H)\text{ is }\mathcal{B}([0,T])\otimes\mathcal{B}(\mD)/\mathcal{B}(\mathcal{L}_2(K,H))\text{-measurable};$
\item $
G(\cdot,\cdot,J,\cdot): [0,T]\times \mD\times Z\rightarrow H\text{ is }\mathcal{B}([0,T])\otimes\mathcal{B}(\mD)\otimes \mathcal{B}(Z)/\mathcal{B}(H)\text{-measurable};$

\item $b(t,\cdot,J) \text{ is }\mathcal{B}_t(\mD)/\mathcal{B}(E)\text{-measurable},$ for any $t\in[0,T]$;

\item $\sigma(t,\cdot,J)\text{ is }\mathcal{B}_t(\mD)/\mathcal{B}(\mathcal{L}_2(K,H))\text{-measurable},$ for any $t\in[0,T]$;

\item $G(t,\cdot,J,z)\text{ is }\mathcal{B}_t(\mD)/\mathcal{B}(H)\text{-measurable},$ for any $t\in[0,T]$ and $z\in Z$;

\item $\label{page 6}
G(\cdot,\cdot,J,\cdot)\text{ is predictable with respect to }\{\mathcal{B}_t(\mD),\ t\in[0,T]\}.
\footnote{See Definition 3.3 in \cite{Ikeda-Watanabe}, page 61. An $H$-valued function $f(t,x,z)$ defined on $[0,T]\times\mD\times Z$ is called $\{\mathcal{B}_t(\mD),~t\in[0,T]\}$-predictable
if the mapping $(t,x,z)\rightarrow f(t,x,z)$ is $\mathcal{I}/\mathcal{B}(H)$-measurable where $\mathcal{I}$ is the smallest $\sigma$-field
on $[0,T]\times\mD\times Z$ such that all real valued function $g$ having the following properties are measurable:

(i) for each $t>0$, $(x,z)\rightarrow g(t,x,z)$ is $\mathcal{B}_t(\mD)\otimes \mathcal{B}(Z)/\mathcal{B}(\mathbb{R})$-measurable;

(ii) for each $(x,z)$, $t\rightarrow g(t,x,z)$ is left continuous.
}\label{Page 1}
$
\end{itemize}
\eas



In this section, we consider the following general distribution-dependent SDEs with jumps
\begin{eqnarray}\label{eq YJ}
\dif Y(t)=b(t,Y,J)\dif t+\sigma(t,Y,J)\dif W(t)+\int_{Z}G(t,Y,J,z)\widetilde{N}^1(\dif t,\dif z),\ 0\le t\le T
\end{eqnarray}
with initial value $Y(0)=h\in H$.

\begin{remark}
 We stress that the abstract formulation of SDEs (\ref{eq YJ}) is general enough to cover many types of SPDEs, such as SPDEs with delay, distribution-dependent SPDEs, McKean-Vlasov SDEs, etc.
\end{remark}

\vskip 0.2cm

We now introduce some definitions related to the solutions of (\ref{eq YJ}).

\begin{definition}\label{def 1}
For a fixed  $J\in Pr(D([0,T],H))$, $Y$ is called a solution of (\ref{eq YJ})
if
\begin{itemize}
\item[(a)] $Y=\{Y(t),t\in[0,T]\}$ is an $\mF$-adapted process with paths in $\mD$,

  \item[(b)] $\int_0^T\|b(t,Y,J)\|_E\dif t + \int_0^T\|\sigma(t,Y,J)\|^2_{\mathcal{L}_2}\dif t + \int_0^T\!\!\int_{Z}\|G(t,Y,J,z)\|^2_H\nu(\dif z)\dif t <\infty,\ P\text{-a.s.,}$
  \item[(c)]  as a stochastic process on $E$, $Y$ satisfies
  \begin{align}\label{eq YJ 0}
  Y(t)=&h+\int_0^tb(s,Y,J)\dif s + \int_0^t\sigma(s,Y,J)\dif W(s)\cr
  &+\int_0^t\!\!\int_{Z}G(s,Y,J,z)\widetilde{N}^1(\dif z,\dif s),\ t\in[0,T],\ P\text{-a.s..}
  \end{align}
\end{itemize}
\end{definition}

\begin{remark}\label{Rem 1.4}
If (a) holds, the assumptions on $b$, $\sigma$ and $G$ imply that

\begin{itemize}
  \item $\{b(s,Y(\omega),J),\ s\in[0,T],\ \omega\in\Omega\}$ is an $E$-valued $\mathbb{F}$-adapted process, 
  \item $\{\sigma(s,Y(\omega),J),\ s\in[0,T],\ \omega\in\Omega\}$ is a $\mathcal{L}_2(K,H)$-valued $\mathbb{F}$-adapted process, 
  \item $\{G(s,Y(\omega),J,z),\ s\in[0,T],\ \omega\in\Omega,\ z\in Z\}$ is a $H$-valued $\mathbb{F}$-predictable process\footnote{
      An $H$-valued function $f(t,x,z)$ defined on $[0,T]\times\Omega\times Z$ is called $\mF$-predictable
if the mapping $(t,x,z)\rightarrow f(t,x,z)$ is $\mathcal{I}_{\mF}/\mathcal{B}(H)$-measurable where $\mathcal{I}_{\mF}$ is the smallest $\sigma$-field
on $[0,T]\times\Omega\times Z$ with respect to which all real valued function $g$ having the following properties are measurable:

(i) for each $t>0$, $(x,z)\rightarrow g(t,x,z)$ is $\mathcal{F}_t\otimes \mathcal{B}(Z)/\mathcal{B}(\mathbb{R})$-measurable,

(ii) for each $(x,z)$, $t\rightarrow g(t,x,z)$ is left continuous. \label{page 2}
}. 
\end{itemize}
Furthermore, if (b) holds, then $\int_0^tb(s,Y,J)\dif s$, $\int_0^t\sigma(s,Y,J)\dif W(s)$ and $\int_0^t\!\!\int_{Z}G(s,Y,J,z)\widetilde{N}^1(\dif s,\dif z)$ are well-defined as the Lebesgue-Stieltjes integral and the $It\hat{o}$ integrals respectively. The reader is referred to \cite{RSZ 2008} and \cite{HY Zhao} for more details.
\end{remark}

\begin{definition}\label{def 2}
The pathwise uniqueness is said to hold for (\ref{eq YJ}) with the fixed $J\in Pr(D([0,T],H))$, if for any two solutions  $Y_1$ and $Y_2$ of (\ref{eq YJ}),
$$
Y_1(t)=Y_2(t),\ t\in[0,T],\ P\text{-a.s..}
$$
\end{definition}
\vskip 0.4cm
Now consider the McKean-Vlasov equation:
\begin{align}\label{eq X}
\dif X(t)=&b(t,X,Law(X))\dif t+\sigma(t,X,Law(X))\dif W(t)\nonumber\\
&+\int_{Z}G(t,X,Law(X),z)\widetilde{N}^1(\dif z,\dif t)
\end{align}
with initial value $X(0)=h\in H$.

Notice that a stochastic process $X=\{X(t)\}_{0\le t\le T}$ is a solution to equation (\ref{eq X}) if
$X$ is a solution of (\ref{eq YJ}) with $J=Law(X)$.

\begin{definition}\label{def 4}
The pathwise uniqueness is said to hold for (\ref{eq X}), if for any two solutions $X_1$ and $X_2$ of (\ref{eq X}),
$$
X_1(t)=X_2(t),\ t\in[0,T]\ P\text{-a.s.,}
$$
and hence $Law(X_1)=Law(X_2)$.
\end{definition}
\vskip 0.5cm

Now we state a perturbation result.
\begin{theorem}\label{Thm 01}
Fix  $J\in Pr(D([0,T],H))$. Suppose that $Y$ is a solution of (\ref{eq YJ}), and pathwise uniqueness holds with the fixed $J$. Then there exists a unique map
$\Gamma_J:C([0,T],K)\times M_{FC}\big([0,T]\times Z\big)\rightarrow\mD$ such that
$$
Y=\Gamma_J(W,N^1).
$$
Moreover for any $m\in(0,\infty)$ and $u=(\phi,\psi)\in \mathcal{S}_1^m\times\mathcal{S}_2^m$, letting
\begin{eqnarray}\label{eq def Yu}
Y^u:=\Gamma_J(W+\int_0^\cdot\phi(s)\dif s, N^\psi),
\end{eqnarray}
we have
\begin{itemize}
\item[(a)] $Y^u=\{Y^u(t),t\in[0,T]\}$ is an $\mF$-adapted process with paths in $\mD$,
  \item[(b)] \begin{align*}
  &\int_0^T\|b(t,Y^u,J)\|_E\dif t + \int_0^T\|\sigma(t,Y^u,J)\|^2_{\mathcal{L}_2}\dif t + \int_0^T\|\sigma(t,Y^u,J)\phi(t)\|_H\dif t\\
  &+ \int_0^T\!\!\int_{Z}\|G(t,Y^u,J,z)\|^2_H\psi(t,z)\nu(\dif z)\dif t
  + \int_0^T\!\!\int_{Z}\|G(t,Y^u,J,z)(\psi(t,z)-1)\|_H\nu(\dif z)\dif t\\
  &<\infty,\ P\text{-a.s.,}
  \end{align*}
  \item[(c)]  as a stochastic equation on $E$, $Y^u$ satisfies
  \begin{align}\label{eq YJ u}
  Y^u(t)
  =&h+\int_0^tb(s,Y^u,J)\dif s + \int_0^t\sigma(s,Y^u,J)\dif W(s)+\int_0^t\sigma(s,Y^u,J)\phi(s)\dif s \cr
  &+ \int_0^t\!\!\int_{Z}G(s,Y^u,J,z)\Big(N^\psi(\dif z,\dif s)-\nu(\dif z)\dif s\Big),\ t\in[0,T],\ P\text{-a.s..}
  \end{align}
\end{itemize}

Moreover, $Y^u$ is the unique stochastic process satisfying (a)-(c).
\end{theorem}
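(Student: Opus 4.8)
The plan is to use the Girsanov transformation / change of measure to relate the controlled equation \eqref{eq YJ u} back to the original equation \eqref{eq YJ}. The key technical input is the standard variational-representation machinery for Brownian motion and Poisson random measures (see \cite{BDG, Budhiraja-Dupuis-Maroulas., Budhiraja-Chen-Dupuis}): for $u=(\phi,\psi)\in\mathcal{S}_1^m\times\mathcal{S}_2^m$ the shifted pair $\big(W+\int_0^\cdot\phi(s)\dif s,\,N^\psi\big)$ is, under a suitably defined equivalent probability measure $Q^u$ obtained from the exponential martingale $\mathcal{E}_t(u)$ built from $\phi$ and $\psi$, distributed exactly as $(W,N^1)$ under $P$. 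Concretely, first I would record that $\Gamma_J$ exists and is unique: this follows from pathwise uniqueness for \eqref{eq YJ} together with the Yamada--Watanabe-type argument already implicit in the setup — since $Y$ is $\mathbb{F}$-adapted and \eqref{eq YJ} has a pathwise-unique solution for the fixed $J$, $Y$ is a measurable functional of the driving pair $(W,N^1)$, giving $Y=\Gamma_J(W,N^1)$.

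Next I would define $Y^u:=\Gamma_J\big(W+\int_0^\cdot\phi(s)\dif s,\,N^\psi\big)$ and verify properties (a)--(c). For adaptedness (a), I would note that $\Gamma_J$ can be taken to be progressively measurable in the sense that $\Gamma_J(w,n)(t)$ depends only on $(w,n)$ restricted to $[0,t]$ (this follows because the solution of \eqref{eq YJ} up to time $t$ depends only on the noise up to time $t$, using the predictability assumptions in Assumption \ref{as1} and Definition \ref{def 1}); since $W+\int_0^\cdot\phi(s)\dif s$ and $N^\psi$ are $\mathbb{F}$-adapted/predictable for $\phi\in\mathcal{S}_1^m$, $\psi\in\mathcal{S}_2^m$, so is $Y^u$. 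To get (b) and (c), I would work under the measure $Q^u$: under $Q^u$, $\widetilde W^u:=W+\int_0^\cdot\phi(s)\dif s$ is a cylindrical BM and $N^\psi$ is a PRM with intensity $\mathrm{Leb}_T\otimes\nu$, so $\big(\widetilde W^u, N^\psi\big)$ has the same law under $Q^u$ as $(W,N^1)$ under $P$. Hence $Y^u=\Gamma_J\big(\widetilde W^u,N^\psi\big)$ satisfies, $Q^u$-a.s., the integrability bound (b) (with $\psi\equiv 1$ replaced by $\psi$, and the extra drift and extra compensator terms accounted for by splitting $\widetilde N^{1}$ computed against $\widetilde W^u$ and the compensated $N^\psi$) and the equation
\[
Y^u(t)=h+\int_0^t b(s,Y^u,J)\dif s+\int_0^t\sigma(s,Y^u,J)\dif\widetilde W^u(s)+\int_0^t\!\!\int_Z G(s,Y^u,J,z)\big(N^\psi(\dif z,\dif s)-\nu(\dif z)\dif s\big).
\]
Rewriting $\dif\widetilde W^u(s)=\dif W(s)+\phi(s)\dif s$ turns the $\sigma\,\dif\widetilde W^u$ term into $\int_0^t\sigma\,\dif W+\int_0^t\sigma\phi\,\dif s$, which is precisely \eqref{eq YJ u}. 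Since $Q^u\sim P$, these statements hold $P$-a.s.\ as well, and the integrability in (b) (in particular $\int_0^T\|\sigma(t,Y^u,J)\phi(t)\|_H\,\dif t<\infty$ and the two jump integrals) follows from Cauchy--Schwarz using $\phi(\cdot,\omega)\in S_1^m$, $\psi(\cdot,\cdot,\omega)\in S_2^m$ together with property (b) transported from \eqref{eq YJ 0}.

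Finally, for uniqueness of $Y^u$ among processes satisfying (a)--(c): suppose $\widetilde Y$ is another such process. Transporting to the measure $Q^u$ and reversing the computation above, $\widetilde Y$ solves \eqref{eq YJ 0} driven by $\big(\widetilde W^u, N^\psi\big)$ with the same fixed $J$; since $\big(\widetilde W^u,N^\psi\big)$ under $Q^u$ realizes the same probabilistic structure as $(W,N^1)$ under $P$, pathwise uniqueness for \eqref{eq YJ} (which holds for the fixed $J$ by hypothesis) forces $\widetilde Y=\Gamma_J\big(\widetilde W^u,N^\psi\big)=Y^u$, $Q^u$-a.s., hence $P$-a.s. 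I expect the main obstacle to be the careful bookkeeping of the change of measure for the \emph{Poisson} part — in particular justifying that $\mathcal{E}_t(u)$ is a genuine martingale (not merely a supermartingale) for $\psi\in\mathcal{S}_2^m$, which is where the boundedness built into $\mathcal{R}_{b,n}$ and the $S_2^m$ constraint are used, and then matching the compensator terms so that the ``$-\nu(\dif z)\dif s$'' in \eqref{eq YJ u} comes out correctly rather than ``$-\psi\,\nu(\dif z)\dif s$''; everything else is routine once the equivalence of measures is in place.
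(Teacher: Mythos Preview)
Your proposal is correct and follows essentially the same approach indicated by the paper, which does not give a full proof but refers to \cite{[Zhai]} and explicitly states that the argument ``requires the careful use of the Girsanov Theorem for the mixture of Brownian motion and Poisson random measures.'' The technical concerns you flag---the martingale property of $\mathcal{E}_t(u)$ via the boundedness built into $\mathcal{R}_{b,n}$, and the compensator bookkeeping so that $N^\psi(\dif z,\dif s)-\nu(\dif z)\dif s$ (rather than $-\psi\,\nu(\dif z)\dif s$) appears---are exactly the points one must handle, and your outline addresses them correctly.
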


\begin{remark}\label{Rem 1.8}
Note that \eqref{eq YJ u} is equivalent to
  \begin{align*}
  Y^u(t)=
  &h+\int_0^tb(s,Y^u,J)\dif s + \int_0^t\sigma(s,Y^u,J)\dif W(s) \cr
  &+\int_0^t\sigma(s,Y^u,J)\phi(s)
  +\int_0^t\!\!\int_{Z}G(s,Y^u,J,z)\widetilde{N}^\psi(\dif z,\dif s)\cr
  &+\int_0^t\!\!\int_{Z}G(s,Y^u,J,z)\Big(\psi(s,z)-1\Big)\nu(\dif z)\dif s,\ t\in[0,T],\ P\text{-a.s..}
  \end{align*}
  where
\begin{align}\label{eq 1.9}
\int_0^t\!\!\int_{Z}G(s,Y^u,J,z)\widetilde{N}^\psi(\dif z,\dif s)
=\int_0^t\!\!\int_{Z}\!\int_{[0,\infty)}G(s,Y^u,J,z)1_{[0,\psi(s,z)]}(r)\widetilde{N}(\dif z,\dif s,\dif r).
\end{align}
\end{remark}
\vskip 0.4cm
Although the claim in Theorem \ref{Thm 01} is not surprising, its rigorous proof requires the careful use of the Girsanov Theorem for the mixture of Brownian motion and Poisson random measures. We refer the reader to \cite{[Zhai]} for details.
\vskip 0.3cm
Applying Theorem \ref{Thm 01} and Remark \ref{Rem 1.8}, we immediately have
\begin{theorem}\label{Thm 02}
Assume  that $X$  is a solution of (\ref{eq X}) with initial value $X(0)=h\in H$, and that the pathwise uniqueness holds for (\ref{eq YJ}) with  $J=Law(X)$.
Then
$X=\Gamma_{Law(X)}(W,N^1)$, where $\Gamma_{Law(X)}$ is the map $\Gamma_J$ in Theorem \ref{Thm 01} with $J=Law(X)$.

Moreover for any $m\in(0,\infty)$ and $u=(\phi,\psi)\in \mathcal{S}_1^m\times\mathcal{S}_2^m$, let
$X^u:=\Gamma_{Law(X)}(W+\int_0^\cdot\phi(s)\dif s, N^\psi)$, then we have
\begin{itemize}
  \item[(a)] $X^u=\{X^u(t),t\in[0,T]\}$ is an $\mF$-adapted process with paths in $\mD$,
  \item[(b)]  as a stochastic equation on $E$, $X^u$ satisfies
  \begin{align}\label{eq YJ u00}
  X^u(t)
  =&
  h+\int_0^tb(s,X^u,Law(X))\dif s + \int_0^t\sigma(s,X^u,Law(X))\dif W(s)\cr
  &+\int_0^t\sigma(s,X^u,Law(X))\phi(s)\dif s
  +\int_0^t\!\!\int_{Z}G(s,X^u,Law(X),z)\widetilde{N}^\psi(\dif z,\dif s)\cr
  &+\int_0^t\!\!\int_{Z}G(s,X^u,Law(X),z)\Big(\psi(s,z)-1\Big)\nu(\dif z)\dif s
  ,\ t\in[0,T],\ P\text{-a.s..}
  \end{align}
\end{itemize}
\end{theorem}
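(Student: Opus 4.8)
The plan is to derive the statement as a direct corollary of Theorem \ref{Thm 01} by specializing the fixed law $J$ to $Law(X)$. First I would observe that since $X$ is, by definition, a solution of the non-distribution-dependent equation \eqref{eq YJ} with the particular choice $J = Law(X)$, and since pathwise uniqueness is assumed to hold for \eqref{eq YJ} with this $J$, Theorem \ref{Thm 01} applies verbatim: it yields the measurable map $\Gamma_{Law(X)} := \Gamma_J|_{J = Law(X)}$ with the representation $X = \Gamma_{Law(X)}(W, N^1)$. This gives the first assertion immediately. For the second part, define $X^u := \Gamma_{Law(X)}(W + \int_0^\cdot \phi(s)\dif s, N^\psi)$ for $u = (\phi,\psi) \in \mathcal{S}_1^m \times \mathcal{S}_2^m$; this is exactly the object $Y^u$ of Theorem \ref{Thm 01} with $J = Law(X)$, so properties (a), (b), (c) of that theorem hold for $X^u$ with $J$ replaced everywhere by $Law(X)$. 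In particular property (a) here is property (a) there, and the equation \eqref{eq YJ u00} is obtained by writing out \eqref{eq YJ u} with $J = Law(X)$.

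The only genuine content beyond a verbatim specialization is the passage from the form \eqref{eq YJ u} to the form \eqref{eq YJ u00}, i.e. replacing the controlled compensated integral against $N^\psi(\dif z,\dif s) - \nu(\dif z)\dif s$ by the sum of a $\widetilde N^\psi$-integral and a Bochner integral against $(\psi(s,z)-1)\nu(\dif z)\dif s$. This is precisely the decomposition recorded in Remark \ref{Rem 1.8}: one writes $N^\psi(\dif z,\dif s) - \nu(\dif z)\dif s = \widetilde N^\psi(\dif z,\dif s) + (\psi(s,z)-1)\nu(\dif z)\dif s$, which is legitimate because the integrability bound in part (b) of Theorem \ref{Thm 01} guarantees that both $\int_0^t\!\!\int_Z G(s,X^u,Law(X),z)\,\widetilde N^\psi(\dif z,\dif s)$ (via \eqref{eq 1.9}) and $\int_0^t\!\!\int_Z \|G(s,X^u,Law(X),z)(\psi(s,z)-1)\|_H\,\nu(\dif z)\dif s$ are finite $P$-a.s., so the splitting is of a convergent integral into two convergent pieces. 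Hence \eqref{eq YJ u00} follows, and I would simply cite Remark \ref{Rem 1.8}.

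I do not anticipate a substantive obstacle here, since the heavy lifting—the construction of $\Gamma_J$ and the verification of the controlled equation via the Girsanov theorem for the mixed Brownian/Poisson noise—has already been carried out in Theorem \ref{Thm 01} (with the details deferred to \cite{[Zhai]}). The one point requiring a line of care is simply noting that $Law(X)$ is a fixed, deterministic element of $Pr(D([0,T],H))$ once $X$ is a given solution, so that it plays exactly the role of the frozen $J$ in Theorem \ref{Thm 01} and is unaffected by the perturbation of $(W,N^1)$ to $(W + \int_0^\cdot\phi\,\dif s, N^\psi)$; this is the conceptual heart of the paper, already illustrated in the Example with \eqref{eq simple EX}. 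Everything else is a transcription of the hypotheses and conclusions of Theorem \ref{Thm 01} and Remark \ref{Rem 1.8}, so the proof is short.
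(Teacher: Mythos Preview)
Your proposal is correct and follows essentially the same approach as the paper: the paper simply states that the theorem follows immediately from Theorem~\ref{Thm 01} and Remark~\ref{Rem 1.8}, which is precisely what you do by specializing $J = Law(X)$ and then invoking the decomposition in Remark~\ref{Rem 1.8} to pass from \eqref{eq YJ u} to \eqref{eq YJ u00}. Your additional remark that $Law(X)$ is a fixed deterministic element of $Pr(D([0,T],H))$, and hence unaffected by the perturbation of the driving noise, is exactly the conceptual point the paper emphasizes in the introduction.
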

\section{Large and moderate deviation principles}
In this section, we will consider the large and moderate deviation principles  of the solutions:
\begin{align}\label{eq X0}
\dif X^\epsilon(t)
=&b_\epsilon(t,X^\epsilon,Law(X^\epsilon))\dif t
+\sqrt{\epsilon}\sigma_\epsilon(t,X^\epsilon,Law(X^\epsilon))\dif W(t)\nonumber\\
&+\epsilon\int_{Z}G_\epsilon(t,X^\epsilon,Law(X^\epsilon),z)\widetilde{N}^{\epsilon^{-1}}
(\dif z,\dif t)
\end{align}
with initial value $X^\epsilon(0)=h\in H$, as $\epsilon \downarrow 0$.
\subsection{Large deviation principle}
Let us first recall the definition of a rate function and LDP.

Let $\mathcal{E}$ be a Polish space with the Borel $\sigma$-field $\mathcal{B}(\mathcal{E}).$ Recall

\begin{definition}[Rate function] A function $I: \mathcal{E} \rightarrow[0, \infty]$ is called a rate function on $\mathcal{E},$ if for each $M<\infty,$ the level set $\{x \in \mathcal{E}: I(x) \leq M\}$ is a compact subset of $\mathcal{E}$.
\end{definition}

\begin{definition}[Large deviation principle] Let I be a rate function on $\mathcal{E}$. Given a collection $\{\hbar(\epsilon)\}_{\epsilon>0}$ of
positive reals, a family $\left\{\mathbb{X}^{\epsilon}\right\}_{\epsilon>0}$ of $\mathcal{E}$-valued random elements is said to satisfy a LDP on $\mathcal{E}$ with speed $\hbar(\epsilon)$ and rate function $I$ if the following two claims hold.
\begin{itemize}
  \item[(a)]
 (Upper bound) For each closed subset $C$ of $\mathcal{E},$
$$
\limsup_{\epsilon \rightarrow 0} \hbar(\epsilon) \log {P}\left(\mathbb{X}^{\epsilon} \in C\right) \leq - \inf_{x \in C} I(x).
$$
 \item[(b)] (Lower bound) For each open subset $O$ of $\mathcal{E}$
$$
\liminf_{\epsilon \rightarrow 0} \hbar(\epsilon) \log {P}\left(\mathbb{X}^{\epsilon} \in O\right) \geq - \inf_{x \in O} I(x).
$$
\end{itemize}
\end{definition}

Introduce the hypothesis:
\begin{itemize}
  \item[(S0)] For any fixed $\epsilon>0$ and $J\in Pr(D([0,T],H))$, the maps $b_\epsilon(\cdot,\cdot, J):[0,T]\times\mD\rightarrow E$,
 $\sigma_\epsilon(\cdot,\cdot, J):[0,T]\times\mD\rightarrow \mathcal{L}_2(K,H)$ and $G_\epsilon(\cdot,\cdot, J,\cdot):[0,T]\times\mD\times Z\rightarrow H$ satisfy the Assumption \ref{as1};
  \item[(S1)] (\ref{eq X0}) has a unique solution $X^\epsilon$ as stated in Definition \ref{def 4};
  \item[(S2)] Pathwise uniqueness holds for the following SDE with the fixed $J$ replaced by $Law(X^\epsilon)$ as stated in Definition \ref{def 2},
\begin{align}\label{eq X0 1}
\dif Y^\epsilon(t)
=b_\epsilon(t,Y^\epsilon,J)\dif t+\sqrt{\epsilon}\sigma_\epsilon(t,Y^\epsilon,J)\dif W(t)
+\epsilon\int_{Z}G_\epsilon(t,Y^\epsilon,J,z)\widetilde{N}^{\epsilon^{-1}}(\dif z,\dif t)
\end{align}
with initial value $Y^\epsilon(0)=h\in H$.
\end{itemize}

\vskip 0.5cm

Theorem \ref{Thm 02} states
that there exists a map $\Gamma^\epsilon_{Law(X^\epsilon)}$ such that $X^\epsilon=\Gamma^\epsilon_{Law(X^\epsilon)}(\sqrt{\epsilon}W(\cdot),\epsilon N^{\epsilon^{-1}})$. Moreover, for any $m\in(0,\infty)$, $u_\epsilon=(\phi_\epsilon,\psi_\epsilon)\in \mathcal{S}_1^m\times\mathcal{S}_2^m$, let
\begin{equation}\label{sol-control}
Z^{u_\epsilon}:=\Gamma^\epsilon_{Law(X^\epsilon)}(\sqrt{\epsilon}W(\cdot)
+\int_0^\cdot\phi_\epsilon(s)\dif s, \epsilon N^{\epsilon^{-1}\psi_\epsilon}),
\end{equation}
then $Z^{u_\epsilon}$ is the unique solution of the equation:
\begin{align}
  &Z^{u_\epsilon}(t)\nonumber\\
  =&
  h+\int_0^tb_\epsilon(s,Z^{u_\epsilon},Law(X^\epsilon))\dif s + \sqrt{\epsilon}\int_0^t\sigma_\epsilon(s,Z^{u_\epsilon},Law(X^\epsilon))\dif W(s)\nonumber\\
  &+\int_0^t\sigma_\epsilon(s,Z^{u_\epsilon},Law(X^\epsilon))\phi_\epsilon(s)\dif s
  +\epsilon\int_0^t\!\!\int_{Z}G_\epsilon(s,Z^{u_\epsilon},Law(X^\epsilon),z)
  \widetilde{N}^{\epsilon^{-1}\psi_\epsilon}(\dif z,\dif s)\nonumber\\
  &+\int_0^t\!\!\int_{Z}G_\epsilon(s,Z^{u_\epsilon},Law(X^\epsilon),z)
  \Big(\psi_\epsilon(s,z)-1\Big)\nu(\dif z)\dif s,\ t\in[0,T],\ P\text{-a.s..}
  \label{eq X0 2-second}
  \end{align}
\vskip 0.5cm

The next  result is a re-formulation of  Theorem 2.4 in \cite{Budhiraja-Chen-Dupuis}, Theorem 4.2 in \cite{Budhiraja-Dupuis-Maroulas.} in the current setting.
\begin{theorem}\label{Thm zhunze 1}
Assume that (S0)-(S2) hold.
Suppose that there exists a measurable map $\Gamma^0:S\rightarrow\mD$
{
\footnote{
The existence of $\Gamma^0$ implies that there exists a measurable map $\widetilde{\Gamma}^0:C([0,T],K)\times M_{FC}\big([0,T]\times Z\big)\rightarrow\mD$ such that, for any $u=(\phi,\psi)\in S$,
$$
\widetilde{\Gamma}^0(\int_0^\cdot\phi(s)\dif s,\hat{\psi}):={\Gamma}^0(u).
$$
The definition of $\hat{\psi}$ can be found in (\ref{eq.corres-func-meas}).
 }
 }
 such that

(a) For any $m\in(0,\infty)$ and any family $\{u_\epsilon=(\phi_\epsilon,\psi_\epsilon);~ \epsilon>0\}\subset\mathcal{S}_1^m\times\mathcal{S}_2^m$
satisfying that $u_\epsilon$ converges in law as ${S}_1^m\times {S}_2^m$-valued random elements to some element $u=(\phi,\psi)$ as
$\epsilon\rightarrow 0$,  $Z^{u_\epsilon}$ converges in law to $\Gamma^0(\phi,\psi)$.

(b) For every $m\in(0,\infty)$, the set
$$
 \Big\{\Gamma^0(\phi,\psi); (\phi,\psi)\in S^m_1\times S^m_2\Big\}
$$
is a compact subset of $\mD$.

Then the family $\{X^\epsilon\}_{\epsilon>0}$ satisfies a LDP in $\mD$ with speed $\epsilon$ and the rate function $I$
given by
\begin{eqnarray}\label{eq rate function 0}
I(g):=\inf_{(\phi,\psi)\in S,\ g=\Gamma^0(\phi,\psi)}\{Q_1(\phi)+Q_2(\psi)\},\ g\in \mD,
\end{eqnarray}
with the convention $\inf\{\emptyset\}=\infty$.
\end{theorem}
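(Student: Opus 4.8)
The plan is to derive Theorem~\ref{Thm zhunze 1} from the general weak-convergence criterion for Laplace principles of functionals of a cylindrical Brownian motion and an independent Poisson random measure with $\sigma$-finite intensity (\cite{Budhiraja-Chen-Dupuis,Budhiraja-Dupuis-Maroulas.}); the only ingredient specific to the McKean--Vlasov structure is the identification, supplied by Theorem~\ref{Thm 02}, of the process that occupies the place of the controlled object in the variational representation. Since $\mD$ is a Polish space, the asserted LDP with rate function $I$ is equivalent to the Laplace principle with the same $I$, so it suffices to verify that $I$ is a rate function and that, for every bounded continuous $f\colon\mD\to\mathbb{R}$,
$$
\lim_{\epsilon\downarrow 0}\Big(-\epsilon\log\mE\big[e^{-f(X^\epsilon)/\epsilon}\big]\Big)=\inf_{g\in\mD}\big\{f(g)+I(g)\big\}.
$$
By (S0)--(S2) and Theorem~\ref{Thm 02}, $X^\epsilon=\Gamma^\epsilon_{Law(X^\epsilon)}\big(\sqrt{\epsilon}W,\epsilon N^{\epsilon^{-1}}\big)$ with $\Gamma^\epsilon_{Law(X^\epsilon)}$ a fixed measurable map, because $Law(X^\epsilon)$ is deterministic; hence $X^\epsilon$ is a measurable functional of $(\sqrt{\epsilon}W,\epsilon N^{\epsilon^{-1}})$, and the variational representation of \cite{Budhiraja-Chen-Dupuis} gives
$$
-\epsilon\log\mE\big[e^{-f(X^\epsilon)/\epsilon}\big]=\inf_{u_\epsilon=(\phi_\epsilon,\psi_\epsilon)}\mE\Big[Q_1(\phi_\epsilon)+Q_2(\psi_\epsilon)+f\big(Z^{u_\epsilon}\big)\Big],
$$
where, for controls $u_\epsilon\in\mathcal{S}_1^m\times\mathcal{S}_2^m$, the process $Z^{u_\epsilon}$ of \eqref{sol-control} is exactly the solution of \eqref{eq X0 2-second} provided by Theorem~\ref{Thm 02}.

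For the bound ``$\limsup\le$'' I would fix $\delta>0$, choose $g_0\in\mD$ with $f(g_0)+I(g_0)\le\inf_g\{f+I\}+\delta$ and, by the definition \eqref{eq rate function 0} of $I$, a \emph{deterministic} pair $(\phi_0,\psi_0)\in S$ with $\Gamma^0(\phi_0,\psi_0)=g_0$ and $Q_1(\phi_0)+Q_2(\psi_0)\le I(g_0)+\delta$; testing the representation with the constant control $u_\epsilon\equiv(\phi_0,\psi_0)$, which trivially converges in law to $(\phi_0,\psi_0)$, hypothesis (a) yields $Z^{u_\epsilon}\Rightarrow\Gamma^0(\phi_0,\psi_0)=g_0$, so $\mE[f(Z^{u_\epsilon})]\to f(g_0)$ by boundedness and continuity of $f$, whence $\limsup_\epsilon(-\epsilon\log\mE[e^{-f(X^\epsilon)/\epsilon}])\le Q_1(\phi_0)+Q_2(\psi_0)+f(g_0)\le\inf_g\{f+I\}+2\delta$. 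For the bound ``$\liminf\ge$'' I would pick near-optimal controls $u_\epsilon=(\phi_\epsilon,\psi_\epsilon)$ in the representation; boundedness of $f$ forces $\sup_\epsilon\mE[Q_1(\phi_\epsilon)+Q_2(\psi_\epsilon)]<\infty$, so after the standard localization (replacing the controls, up to an arbitrarily small error, by ones in $\mathcal{S}_1^M\times\mathcal{S}_2^M$ for a suitable $M<\infty$) the family $\{u_\epsilon\}$ is tight, since $S_1^M\times S_2^M$ is compact. Along any subsequence I would extract a further subsequence with $u_\epsilon\Rightarrow u=(\phi,\psi)$; then hypothesis (a) gives $Z^{u_\epsilon}\Rightarrow\Gamma^0(\phi,\psi)$, and the Skorokhod representation theorem, Fatou's lemma, lower semicontinuity of $Q_1$ in the weak topology and of $Q_2$, continuity and boundedness of $f$, and the inequality $I(\Gamma^0(\phi,\psi))\le Q_1(\phi)+Q_2(\psi)$ from \eqref{eq rate function 0} together yield
$$
\liminf_{\epsilon\downarrow0}\mE\big[Q_1(\phi_\epsilon)+Q_2(\psi_\epsilon)+f(Z^{u_\epsilon})\big]\ \ge\ \mE\big[I(\Gamma^0(\phi,\psi))+f(\Gamma^0(\phi,\psi))\big]\ \ge\ \inf_{g\in\mD}\big\{f(g)+I(g)\big\}.
$$
Since this holds along every subsequence it holds as $\epsilon\downarrow0$; combining with ``$\limsup\le$'' and letting $\delta\downarrow0$ gives the Laplace principle.

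It then remains to check that $I$ is a rate function. The level set $\{g\in\mD:I(g)\le M\}$ is contained in $\{\Gamma^0(\phi,\psi):(\phi,\psi)\in S_1^{M+1}\times S_2^{M+1}\}$, which is compact by hypothesis (b); and it is closed, for if $g_n\to g$ with $I(g_n)\le M$ one chooses $(\phi_n,\psi_n)$ with $\Gamma^0(\phi_n,\psi_n)=g_n$ and $Q_1(\phi_n)+Q_2(\psi_n)\le M+\tfrac1n$, passes to a subsequence with $(\phi_n,\psi_n)\to(\phi,\psi)$ in the compact space $S_1^{M+1}\times S_2^{M+1}$, and applies hypothesis (a) to the constant controls $(\phi_n,\psi_n)$ along a sufficiently slowly decreasing $\epsilon_n\downarrow0$ to force the common weak limit $\Gamma^0(\phi,\psi)$ to coincide with $\lim_n g_n=g$; lower semicontinuity of $Q_1,Q_2$ then gives $I(g)\le Q_1(\phi)+Q_2(\psi)\le M$. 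Hence the level sets of $I$ are compact, and the Laplace principle yields the LDP with speed $\epsilon$ and rate function $I$. The main point to get right — and the only place where the McKean--Vlasov feature genuinely enters — is that the controlled process in the representation solves \eqref{eq X0 2-second} with the \emph{frozen} distribution $Law(X^\epsilon)$ rather than a self-consistent law, which is exactly the content of Theorem~\ref{Thm 02}; the remaining steps are the standard execution of the weak-convergence method, requiring only the measure-theoretic care appropriate to a mixture of a cylindrical Brownian motion and a Poisson random measure with $\sigma$-finite intensity.
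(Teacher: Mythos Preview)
Your proposal is correct and follows exactly the route the paper indicates: the paper does not give a proof of Theorem~\ref{Thm zhunze 1} at all, but simply states that it is a re-formulation of \cite[Theorem 2.4]{Budhiraja-Chen-Dupuis} and \cite[Theorem 4.2]{Budhiraja-Dupuis-Maroulas.} in the present setting, the only new ingredient being the identification, via Theorem~\ref{Thm 02}, of the controlled process $Z^{u_\epsilon}$ as the solution of \eqref{eq X0 2-second} with the \emph{frozen} law $Law(X^\epsilon)$. Your write-up spells out the standard Laplace-principle argument behind those cited theorems (variational representation, deterministic controls for the upper bound, tightness plus lower semicontinuity for the lower bound, and the diagonal argument to deduce continuity of $\Gamma^0$ on $S_1^m\times S_2^m$ from hypothesis~(a) for the goodness of $I$), which is more detail than the paper itself provides but is entirely in line with its intended proof.
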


The next theorem provides a convenient, sufficient condition for verifying the assumptions in Theorem \ref{Thm zhunze 1}.
\begin{theorem}\label{Thm zhunze 2}
Assume that (S0)-(S2) hold. Suppose that there exists a measurable map $\Gamma^0:S\rightarrow\mD$ such that

(a) For any $m\in(0,\infty)$, any family $\{u_\epsilon=(\phi_\epsilon,\psi_\epsilon);~ \epsilon>0\}\subset \mathcal{S}_1^m\times\mathcal{S}_2^m$, and any
$\delta>0$,
$$
\lim_{\epsilon\rightarrow 0}P\Big(d(Z^{u_\epsilon},\Gamma^0(\phi_\epsilon,\psi_\epsilon))>\delta\Big)=0.
$$

(b) For any $m\in(0,\infty)$ and any family $\{(\phi_n,\psi_n)\in S^m_1\times S^m_2,~n\in\mathbb{N}\}$ satisfying that
$(\phi_n,\psi_n)$ converges to some element $(\phi,\psi)$ in $S^m_1\times S^m_2$ as $n\rightarrow \infty$,
$\Gamma^0(\phi_n,\psi_n)$ converges to $\Gamma^0(\phi,\psi)$ in the space $\mD$.

Then the family $\{X^\epsilon\}_{\epsilon>0}$ satisfies a LDP in $\mD$ with speed $\epsilon$ and the rate function $I$
given by (\ref{eq rate function 0}).
\end{theorem}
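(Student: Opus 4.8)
The plan is to deduce Theorem~\ref{Thm zhunze 2} from Theorem~\ref{Thm zhunze 1} by showing that its two hypotheses (a) and (b) imply, respectively, hypotheses (a) and (b) of Theorem~\ref{Thm zhunze 1}. Hypothesis (b) of the two theorems differs only superficially: the compactness of $\{\Gamma^0(\phi,\psi):(\phi,\psi)\in S_1^m\times S_2^m\}$ in $\mD$ follows from the joint continuity assumed in Theorem~\ref{Thm zhunze 2}(b) together with the compactness of $S_1^m$ (weak topology) and of $S_2^m$ (the topology induced by the identification \eqref{eq.corres-func-meas}); since $S_1^m\times S_2^m$ is compact and $\Gamma^0$ restricted to it is continuous into the metric space $\mD$, the image is compact. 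So the work is concentrated in deriving Theorem~\ref{Thm zhunze 1}(a) from Theorem~\ref{Thm zhunze 2}(a)--(b).

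First I would take a family $\{u_\epsilon=(\phi_\epsilon,\psi_\epsilon)\}\subset\mathcal{S}_1^m\times\mathcal{S}_2^m$ converging in law, as $S_1^m\times S_2^m$-valued random elements, to some $u=(\phi,\psi)$, and show $Z^{u_\epsilon}\Rightarrow\Gamma^0(\phi,\psi)$ in $\mD$. The standard device here is the Skorokhod representation theorem: pass to a common probability space on which copies (still denoted $u_\epsilon$, $u$) converge almost surely in $S_1^m\times S_2^m$. On that space, hypothesis (b) of Theorem~\ref{Thm zhunze 2} gives $\Gamma^0(\phi_\epsilon,\psi_\epsilon)\to\Gamma^0(\phi,\psi)$ in $\mD$ almost surely, hence in probability. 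Meanwhile hypothesis (a) of Theorem~\ref{Thm zhunze 2}, applied to the family $\{u_\epsilon\}$ (which lies in $\mathcal{S}_1^m\times\mathcal{S}_2^m$), yields $d(Z^{u_\epsilon},\Gamma^0(\phi_\epsilon,\psi_\epsilon))\to 0$ in probability. Combining the two by the triangle inequality for $d$ gives $d(Z^{u_\epsilon},\Gamma^0(\phi,\psi))\to 0$ in probability, which in particular implies $Z^{u_\epsilon}\Rightarrow\Gamma^0(\phi,\psi)$, i.e. Theorem~\ref{Thm zhunze 1}(a). One should note that $Z^{u_\epsilon}$ is defined (via \eqref{sol-control}) with respect to the original driving noise, so some care is needed to make sense of $Z^{u_\epsilon}$ on the Skorokhod space; the clean way is to observe that the law of $d(Z^{u_\epsilon},\Gamma^0(\phi_\epsilon,\psi_\epsilon))$ is unchanged under the representation, so the convergence in probability transfers. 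A marginally subtler point is that hypothesis~(b) of Theorem~\ref{Thm zhunze 2} is stated for deterministic sequences indexed by $n\in\mathbb{N}$ converging in $S_1^m\times S_2^m$; to apply it along the almost-sure realizations one uses that almost-sure convergence along the continuous parameter $\epsilon\to0$ can be reduced to sequential convergence, and that the exceptional null set can be discarded.

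With both hypotheses of Theorem~\ref{Thm zhunze 1} verified, the conclusion — the LDP in $\mD$ with speed $\epsilon$ and rate function $I$ given by \eqref{eq rate function 0} — follows immediately by invoking Theorem~\ref{Thm zhunze 1}. The main obstacle I anticipate is not any single deep estimate but rather the bookkeeping around the Skorokhod representation: ensuring that the controlled process $Z^{u_\epsilon}$, the skeleton $\Gamma^0(\phi_\epsilon,\psi_\epsilon)$, and the limit $\Gamma^0(\phi,\psi)$ are all realized consistently so that the triangle-inequality argument is legitimate, and handling the mismatch between the continuous index $\epsilon$ and the discrete index $n$ in hypothesis~(b). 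These are routine but must be done carefully; everything else is a direct transcription.
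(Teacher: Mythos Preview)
Your proposal is correct and matches the paper's intended route: the paper omits the proof entirely, noting only that it ``is very similar to that of Theorem 3.2 in \cite{MSZ}'', and the standard argument there is precisely the reduction to Theorem~\ref{Thm zhunze 1} that you outline. One small simplification: you can bypass the Skorokhod representation (and the bookkeeping you flag about realizing $Z^{u_\epsilon}$ on the new space) by arguing directly that (b) plus the continuous mapping theorem gives $\Gamma^0(u_\epsilon)\Rightarrow\Gamma^0(u)$, and then combining this with (a) via the converging-together lemma to get $Z^{u_\epsilon}\Rightarrow\Gamma^0(u)$.
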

\vskip 0.4cm
The proof of Theorem \ref{Thm zhunze 2} is very similar to that of Theorem 3.2 in \cite{MSZ}, so we omit it here.

\subsection{Moderate deviation principle}

Theorem \ref{Thm 02} can also be applied to establish a MDP of the solution $X^\epsilon$ to (\ref{eq X0}) as $\epsilon$ decreases to 0.

\vskip 0.2cm

Introduce the following conditions.
\begin{itemize}
  \item[(S3)] As the parameter $\epsilon$ tends to zero, the solution $X^\epsilon$ of (\ref{eq X0}) will converge in probability (in a suitable path space) to $X^0$ given as the solution of the following
deterministic equation
\begin{eqnarray}\label{eq X0 1}
\dif X^0(t)=b_0(t,X^0,Law(X^0))\dif t
\end{eqnarray}
with initial value $X^0(0)=h\in H$.
  \item[(S4)] (\ref{eq X0 1}) has a unique solution $X^0=\{X^0(t),\ t\in[0,T]\}$.
\end{itemize}

\vskip 0.3cm
Assume $a(\epsilon)>0,~\epsilon>0$ satisfies
\begin{eqnarray}\label{eq a ep}
a(\epsilon)\rightarrow 0,\ \ \ \epsilon/a^2(\epsilon)\rightarrow 0,\ \ \ as\ \epsilon\rightarrow 0.
\end{eqnarray}

Let
\begin{eqnarray*}
M^\epsilon(t)=\frac{1}{a(\epsilon)}(X^\epsilon(t)-X^0(t)),\ t\in[0,T].
\end{eqnarray*}
Then
$M^\epsilon=\{M^\epsilon(t),\ t\in[0,T]\}$ satisfies the following SDEs
\begin{align}\label{Eq MDP 0}
\dif M^\epsilon(t)
=&\frac{1}{a(\epsilon)}\left(b_\epsilon(t,a(\epsilon)M^\epsilon+X^0,Law(X^\epsilon))
-b_0(t,X^0,Law(X^0))\right)\dif t\cr
&+\frac{\sqrt{\epsilon}}{a(\epsilon)}\sigma_\epsilon(t,a(\epsilon)M^\epsilon+X^0,
Law(X^\epsilon))\dif W(t)\cr
&+\frac{\epsilon}{a(\epsilon)}\int_{Z}G_\epsilon(t,a(\epsilon)M^\epsilon+X^0,Law(X^\epsilon),z)
\widetilde{N}^{\epsilon^{-1}}(\dif z,\dif t),
\end{align}
with $M^\epsilon(0)=0$.

Denote
$$
\mathcal{R}:=\{\varphi:[0,T]\times Z\times\Omega\rightarrow \mathbb{R};\varphi\ \text{is}\  (\mathcal{P}\otimes\mathcal{B}(Z))/\mathcal{B}(\mathbb{R})\text{-measurable}\}.
$$
For any given $\epsilon>0$ and $m\in(0,\infty)$, denote
$$\aligned
S^m_{+,\epsilon} &:=\{g:[0,T]\times Z\rightarrow[0,\infty)\,|\,\ Q_2(g)\leq ma^2(\epsilon)\},\\
S^m_\epsilon &:=\{\varphi:[0,T]\times Z\rightarrow \mathbb{R}\,|\,\varphi=(g-1)/a(\epsilon),\ g\in S^m_{+,\epsilon}\},\\
\mathcal{S}^m_{+,\epsilon} &:=\{g\in\mathcal{R}_b\,|\, g(\cdot,\cdot,\omega)\in S^m_{+,\epsilon},\text{ for }P\text{-a.e. }\omega\in\Omega\},\\
\mathcal{S}^m_\epsilon &:=\{\varphi\in\mathcal{R}\,|\,\varphi(\cdot,\cdot,\omega)\in S^m_\epsilon,\text{ for }P\text{-a.e. }\omega\in\Omega\}.
\endaligned$$

Denote $L^2(\nu_T)$ the space of all $\mathcal{B}([0,T])\otimes\mathcal{B}(Z)/ \mathcal{B}(\mathbb{R})$ measurable functions $f$ satisfying that
$$\|f\|^2_2:=\int_0^T\!\!\int_Z|f(s,z)|^2\nu(\dif z)\dif s < +\infty.$$
Then $(L^2(\nu_T),\|\cdot\|_2)$ is a Hilbert space. Denote by $B_2(r)$  the ball of radius $r$ centered at $0$ in $L^2(\nu_T)$.
Throughout this paper, $B_2(r)$ is equipped with the weak topology of $L^2(\nu_T)$ and therefore compact.

Suppose $g\in S^m_{+,\epsilon}$. By Lemma 3.2 in \cite{BDG}, there exists a constant $\kappa_2(1)>0$ (independent of $\epsilon$) such that
$\varphi1_{\{|\varphi|\leq1/a(\epsilon)\}}\in B_2(\sqrt{m\kappa_2(1)})$, where $\varphi=(g-1)/a(\epsilon)$.

\vskip 0.2cm
Assume that (S0)--(S2) hold. Recall the map $\Gamma^\epsilon_{Law(X^\epsilon)}$ defined in (4.3). Set
$$
\Upsilon^\epsilon_{Law(X^\epsilon)}\left(\cdot,\cdot\right)
   :=
\frac{1}{a(\epsilon)}\left(\Gamma^\epsilon_{Law(X^\epsilon)}\left(\cdot,\cdot \right)-X^0\right).
$$
Then, by the property of $\Gamma^\epsilon_{Law(X^\epsilon)}$, we have
\begin{itemize}
  \item[(a)] $\Upsilon^\epsilon_{Law(X^\epsilon)}$ is a measurable map from $C([0,T],K)\times M_{FC}\big([0,T]\times Z\big)\rightarrow\mD$ such that
      $$
        M^\epsilon=\Upsilon^\epsilon_{Law(X^\epsilon)}\left(\sqrt{\epsilon}W(\cdot),\epsilon N^{\epsilon^{-1}}\right).
      $$
  \item [(b)] for any $m\in(0,\infty)$, $u_\epsilon=(\phi_\epsilon,\psi_\epsilon)\in \mathcal{S}^m_1\times\mathcal{S}^m_{+,\epsilon}$, let
  $$
    M^{u_\epsilon}
  :=
    \Upsilon^\epsilon_{Law(X^\epsilon)}
       \left(\sqrt{\epsilon}W(\cdot)+a(\epsilon)\int_0^\cdot\phi_\epsilon(s)ds,\epsilon N^{\epsilon^{-1}\psi_\epsilon}\right),
  $$
  then $M^{u_\epsilon}$ is the unique solution of the following SDE: for each $t\in[0,T]$,
  \begin{align}\label{eq MDP 1-second}
    M^{u_\epsilon}(t)
   =&\frac{1}{a(\epsilon)}\int_0^t\left(b_\epsilon(s,a(\epsilon)M^{u_\epsilon}
   +X^0,Law(X^\epsilon))-b_0(s,X^0,Law(X^0))\right)\dif s\cr
   &+\frac{\sqrt{\epsilon}}{a(\epsilon)}\int_0^t\sigma_\epsilon(s,a(\epsilon)
   M^{u_\epsilon}+X^0,Law(X^\epsilon))\dif W(s)\cr
   &+\int_0^t\sigma_\epsilon(s,a(\epsilon)M^{u_\epsilon}+X^0,Law(X^\epsilon))
   \phi_\epsilon(s)\dif s\\
  &+\frac{\epsilon}{a(\epsilon)}\int_0^t\!\!\int_{Z}G_\epsilon(s,a(\epsilon)M^{u_\epsilon}+X^0,
    Law(X^\epsilon),z)\widetilde{N}^{\epsilon^{-1}\psi_\epsilon}(\dif z,\dif s)\cr
  &+\frac{1}{a(\epsilon)}\int_0^t\!\!\int_{Z}G_\epsilon(s,a(\epsilon)M^{u_\epsilon}+X^0,Law(X^\epsilon),z)
   \Big(\psi_\epsilon(s,z)-1\Big)\nu(\dif z)\dif s.\nonumber
  \end{align}

\end{itemize}


\vskip 0.5cm

Next we state the  result on MDP which is a re-formulation  of \cite[Theorem 2.3]{BDG} in the present setting.
\begin{theorem}\label{TH MDP 1}
Assume that  (S0)-(S4) hold. Suppose that there exists a measurable map $\Upsilon^0:L^2([0,T],K)\times L^2(\nu_T)\rightarrow\mD$
{
\footnote{
The existence of $\Upsilon^0$ implies that there exists a measurable map $\widetilde{\Upsilon}^0:C([0,T],K)\times L^2(\nu_T)\rightarrow\mD$ such that, for any $u=(\phi,\varphi)\in L^2([0,T],K)\times L^2(\nu_T)$,
$$
\widetilde{\Upsilon}^0(\int_0^\cdot\phi(s)\dif s,\varphi):=\Upsilon^0(u).
$$
 }
 }
 such that
\begin{itemize}
  \item[(MDP 1)] for any given $m\in(0,\infty)$, the set
  $$
  \Big\{\Upsilon^0(\phi,\varphi);\ (\phi,\varphi)\in S^m_1\times B_2(m)\Big\}
  $$
  is a compact subset of $\mD$;

  \item[(MDP 2)]for any given $m\in(0,\infty)$ and any family $\{(\phi_\epsilon,\psi_\epsilon);\epsilon>0\}\subset \mathcal{S}^m_1\times\mathcal{S}^m_{+,\epsilon}$ satisfying that $\phi_\epsilon\to\phi$ in $S^m_1$ and
  for some $\beta\in(0,1]$, $\varphi_\epsilon1_{\{|\varphi_\epsilon|\leq \beta/a(\epsilon)\}}\to\varphi$ in $B_2(\sqrt{m\kappa_2(1)})$
  where $\varphi_\epsilon=(\psi_\epsilon-1)/a(\epsilon)$,
  then,
  $$
   M^{u_\epsilon}\Rightarrow \Upsilon^0(\phi,\varphi)\text{ in }\mD,
  $$ where $M^{u_\epsilon}$ is the solution to (\ref{eq MDP 1-second}).
\end{itemize}
Then $\{M^\epsilon(t),\ t\in[0,T]\}_{\epsilon>0}$ satisfies a LDP with speed $\epsilon/a^2(\epsilon)$ and the rate
function $I$ given by
{\small
\begin{align}\label{eq rate MDP 1}
&I(g)\\
:=&\inf_{(\phi,\varphi)\in L^2([0,T],K)\times L_2(\nu_T),\ g=\Upsilon^0(\phi,\varphi)}
     \Big\{\frac{1}{2}\int_0^T\|\phi(s)\|^2_K\dif s+\frac{1}{2}\int_0^T\!\!\int_Z|\varphi(s,z)|^2\nu(\dif z)\dif s\Big\},\ g\in \mD,\nonumber
\end{align}
}
with the convention $\inf\emptyset=\infty$.
\end{theorem}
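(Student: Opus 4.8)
The plan is to recognize Theorem \ref{TH MDP 1} as an instance of the abstract weak-convergence criterion for moderate deviations in \cite{BDG}, applied to the family of functionals $M^\epsilon = \Upsilon^\epsilon_{Law(X^\epsilon)}(\sqrt{\epsilon}W,\epsilon N^{\epsilon^{-1}})$. The essential point is that, by Theorem \ref{Thm 02} together with conditions (S0)--(S2), the scaled process $M^\epsilon$ is realized as a measurable image of the driving noise $(\sqrt{\epsilon}W,\epsilon N^{\epsilon^{-1}})$ through the map $\Upsilon^\epsilon_{Law(X^\epsilon)}$, and the perturbed process $M^{u_\epsilon}$ satisfies the explicit controlled SDE \eqref{eq MDP 1-second}. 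Thus the whole problem fits the template of \cite[Theorem 2.3]{BDG}: one has a sequence of measurable maps, a variational representation for exponential functionals of the mixed Brownian--Poisson noise, and one needs to verify the two sufficient conditions (MDP 1)--(MDP 2), which are precisely the ``compactness of the level sets of the skeleton map'' and the ``weak convergence of the controlled processes'' conditions in the MDP regime (speed $\epsilon/a^2(\epsilon)$, with the Poisson control rescaled around $1$ at rate $a(\epsilon)$ as encoded in the spaces $S^m_{+,\epsilon}$, $S^m_\epsilon$).

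First I would set up the correspondence explicitly: use condition (S1) to obtain $X^\epsilon$ and hence $Law(X^\epsilon)$ (deterministic), then invoke Theorem \ref{Thm 02} with $J = Law(X^\epsilon)$ (legitimate by (S2)) to get the map $\Gamma^\epsilon_{Law(X^\epsilon)}$ and the identity $X^\epsilon = \Gamma^\epsilon_{Law(X^\epsilon)}(\sqrt{\epsilon}W,\epsilon N^{\epsilon^{-1}})$; then define $\Upsilon^\epsilon_{Law(X^\epsilon)} := a(\epsilon)^{-1}(\Gamma^\epsilon_{Law(X^\epsilon)}(\cdot,\cdot) - X^0)$ as in the excerpt, which is measurable since $X^0$ is deterministic by (S4). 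The controlled identity $M^{u_\epsilon} = \Upsilon^\epsilon_{Law(X^\epsilon)}(\sqrt{\epsilon}W + a(\epsilon)\int_0^\cdot \phi_\epsilon\,ds,\ \epsilon N^{\epsilon^{-1}\psi_\epsilon})$ together with \eqref{eq MDP 1-second} then follows from Theorem \ref{Thm 02}(b) and Remark \ref{Rem 1.8} by subtracting the deterministic equation \eqref{eq X0 1} for $X^0$ and dividing by $a(\epsilon)$; one checks that the appropriate integrability in the sense of Theorem \ref{Thm 01}(b) holds because of the scaling factors $\sqrt{\epsilon}/a(\epsilon)$, $\epsilon/a(\epsilon)$ and the defining bounds on $S^m_1$, $S^m_{+,\epsilon}$. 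Second, I would verify that the variational representation of \cite{BDG} for functionals of the mixed noise $(W,N)$ applies in our space $\mathbb{D}$ (a Polish space, as noted), and that the moderate-deviation scaling — controls $\phi_\epsilon$ with $Q_1(\phi_\epsilon)\le m$ and $\psi_\epsilon$ with $Q_2(\psi_\epsilon)\le m a^2(\epsilon)$, corresponding to $\varphi_\epsilon=(\psi_\epsilon-1)/a(\epsilon)$ lying (after truncation) in a fixed $L^2(\nu_T)$-ball — matches the hypotheses under which \cite[Theorem 2.3]{BDG} produces an LDP with speed $\epsilon/a^2(\epsilon)$ and rate \eqref{eq rate MDP 1}.

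The main obstacle — and the only place where real work beyond quotation is needed — is confirming that conditions (MDP 1)--(MDP 2), as stated here, are exactly the conditions required by \cite[Theorem 2.3]{BDG} in the present generality, namely that the noise is a \emph{mixture} of a cylindrical Brownian motion and a Poisson random measure (the cited theorem must be the mixed version, or one combines the Brownian part of \cite{BD 2000} with the Poisson part of \cite{Budhiraja-Chen-Dupuis,BDG}). In particular one must check: (i) the truncation $\varphi_\epsilon 1_{\{|\varphi_\epsilon|\le \beta/a(\epsilon)\}}$ is the correct object — this is handled by Lemma 3.2 of \cite{BDG}, quoted in the excerpt, which guarantees it lands in $B_2(\sqrt{m\kappa_2(1)})$ and that the contribution of the un-truncated tail is negligible at the relevant exponential scale; (ii) the compactness in (MDP 1) over $S^m_1\times B_2(m)$ (both equipped with weak topologies, hence compact) transfers through $\Upsilon^0$ to compactness in $\mathbb{D}$, which is exactly the level-set compactness needed to make $I$ a rate function; and (iii) the weak convergence $M^{u_\epsilon}\Rightarrow \Upsilon^0(\phi,\varphi)$ in (MDP 2) is precisely the Laplace-principle lower/upper bound input. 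Granting that \cite[Theorem 2.3]{BDG} covers the mixed-noise setting (which is the standard state of the art and is what the paper's Section 2 framework is built for), the proof is then a direct citation: (MDP 1) gives the rate-function property and the compactness needed for the upper bound, (MDP 2) gives the Laplace principle, and together with the equivalence of the Laplace principle and the LDP on the Polish space $\mathbb{D}$ one concludes that $\{M^\epsilon\}_{\epsilon>0}$ satisfies the LDP with speed $\epsilon/a^2(\epsilon)$ and rate function $I$ of \eqref{eq rate MDP 1}. I would therefore keep the proof short, spelling out the reduction and the scaling bookkeeping in detail and referring to \cite{BDG} for the abstract machinery.
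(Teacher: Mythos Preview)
Your proposal is correct and follows exactly the paper's approach: the paper itself states that Theorem~\ref{TH MDP 1} is a re-formulation of \cite[Theorem 2.3]{BDG} in the present setting and gives no further proof, and your write-up simply spells out why this re-formulation is legitimate, namely that (S0)--(S2) together with Theorem~\ref{Thm 02} produce the measurable maps $\Upsilon^\epsilon_{Law(X^\epsilon)}$ and the controlled equation \eqref{eq MDP 1-second}, after which conditions (MDP~1)--(MDP~2) are exactly the compactness and weak-convergence hypotheses of the cited abstract theorem.
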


\vskip 0.5cm
Here is a sufficient condition for verifying the assumptions in Theorem \ref{TH MDP 1}.
\begin{theorem}\label{TH MDP 2}
Assume that  (S0)-(S4) hold. Suppose that there exists a measurable map $\Upsilon^0:L^2([0,T],K)\times L^2(\nu_T)\rightarrow\mD$ such that
\begin{itemize}
  \item[(MDP 1')] Given $m\in(0,\infty)$, for any $(\phi_n,\varphi_n)\in S^m_1\times B_2(m), n\in\mathbb{N}$ and $\phi_n\rightarrow \phi$
  in $S^m_1$, $\varphi_n\rightarrow\varphi$ in $B_2(m)$, as $n\rightarrow\infty$, then
  $$
  \Upsilon^0(\phi_n,\varphi_n)\rightarrow\Upsilon^0(\phi,\varphi)\text{ in }\mD;
  $$

  \item[(MDP 2')]Given $m\in(0,\infty)$, let $\{(\phi_\epsilon,\psi_\epsilon)\}_{\epsilon>0}$ be such that for every $\e>0$,
  $(\phi_\epsilon,\psi_\epsilon)\in \mathcal{S}^m_1\times\mathcal{S}^m_{+,\epsilon}$, and
  for some $\beta\in(0,1]$, $\varphi_\epsilon1_{\{|\varphi_\epsilon|\leq \beta/a(\epsilon)\}}\in B_2(\sqrt{m\kappa_2(1)})$
  where $\varphi_\epsilon=(\psi_\epsilon-1)/a(\epsilon)$, for any $\varpi>0$,
  $$
   \lim_{\epsilon\rightarrow0}
      P\Big(
         d(M^{u_\epsilon}, \Upsilon^0(\phi_\epsilon,\varphi_\epsilon1_{\{|\varphi_\epsilon|\leq \beta/a(\epsilon)\}}))>\varpi\Big)=0.
  $$
\end{itemize}
Then $\{M^\epsilon(t),\ t\in[0,T]\}_{\epsilon>0}$ satisfies a LDP with speed $\epsilon/a^2(\epsilon)$ and the rate
function $I$ defined by (\ref{eq rate MDP 1}).

\end{theorem}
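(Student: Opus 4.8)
The plan is to deduce Theorem \ref{TH MDP 2} from Theorem \ref{TH MDP 1} by verifying that conditions (MDP 1') and (MDP 2') imply (MDP 1) and (MDP 2) respectively, following the strategy used for the proof of Theorem \ref{Thm zhunze 2} (i.e.\ the argument of Theorem 3.2 in \cite{MSZ}) adapted to the moderate deviation scaling.

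First I would show that (MDP 1') implies (MDP 1). The set $S^m_1\times B_2(m)$ is compact in $L^2([0,T],K)\times L^2(\nu_T)$, since $S^m_1$ is weakly compact in $L^2([0,T],K)$ (as recalled after the definition of $Q_1$) and $B_2(m)$ is weakly compact in the Hilbert space $L^2(\nu_T)$. Condition (MDP 1') says precisely that $\Upsilon^0$ restricted to this set is sequentially continuous, hence continuous (the topology being metrizable). Therefore $\{\Upsilon^0(\phi,\varphi):(\phi,\varphi)\in S^m_1\times B_2(m)\}$ is the continuous image of a compact set, hence compact in $\mD$; this is (MDP 1).

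Next I would show that (MDP 2') implies (MDP 2). Fix $m\in(0,\infty)$ and a family $\{(\phi_\epsilon,\psi_\epsilon)\}_{\epsilon>0}\subset\mathcal{S}^m_1\times\mathcal{S}^m_{+,\epsilon}$ with $\phi_\epsilon\to\phi$ in $S^m_1$ and $\varphi_\epsilon 1_{\{|\varphi_\epsilon|\le\beta/a(\epsilon)\}}\to\varphi$ in $B_2(\sqrt{m\kappa_2(1)})$, where $\varphi_\epsilon=(\psi_\epsilon-1)/a(\epsilon)$. By (MDP 2'), $d(M^{u_\epsilon},\Upsilon^0(\phi_\epsilon,\varphi_\epsilon 1_{\{|\varphi_\epsilon|\le\beta/a(\epsilon)\}}))\to 0$ in probability. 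On the other hand, since $(\phi_\epsilon,\varphi_\epsilon 1_{\{|\varphi_\epsilon|\le\beta/a(\epsilon)\}})$ lives in the fixed compact set $S^m_1\times B_2(\sqrt{m\kappa_2(1)})$ and converges to $(\phi,\varphi)$ there, the continuity of $\Upsilon^0$ on that set (established as in the previous paragraph, applying (MDP 1') with $m$ replaced by a large enough constant so that $B_2(\sqrt{m\kappa_2(1)})\subseteq B_2(m')$) gives $\Upsilon^0(\phi_\epsilon,\varphi_\epsilon 1_{\{|\varphi_\epsilon|\le\beta/a(\epsilon)\}})\to\Upsilon^0(\phi,\varphi)$ in $\mD$. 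Combining these two facts via the triangle inequality for $d$ yields $M^{u_\epsilon}\Rightarrow\Upsilon^0(\phi,\varphi)$ in $\mD$, which is (MDP 2). With (MDP 1) and (MDP 2) in hand, Theorem \ref{TH MDP 1} gives the LDP for $\{M^\epsilon\}_{\epsilon>0}$ with speed $\epsilon/a^2(\epsilon)$ and rate function \eqref{eq rate MDP 1}, completing the proof.

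The only subtle point, and the one I would treat most carefully, is the bookkeeping of the radii: (MDP 1') is stated for balls $B_2(m)$ indexed by the same $m$ as the constraint on $\phi$, whereas in (MDP 2') the truncated controls $\varphi_\epsilon 1_{\{|\varphi_\epsilon|\le\beta/a(\epsilon)\}}$ are only known to lie in $B_2(\sqrt{m\kappa_2(1)})$; one must invoke (MDP 1') at the level $m'=\max\{m,\sqrt{m\kappa_2(1)}\}$ (or simply note that $\Upsilon^0$ is continuous on $S^{m'}_1\times B_2(m')$ for every $m'$, since the hypotheses quantify over all $m$) so that the compactness and continuity arguments apply uniformly over the relevant range. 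Apart from this, the proof is a routine continuous-mapping / near-equivalence argument, and since it parallels the proof of Theorem \ref{Thm zhunze 2}, one could alternatively just refer to \cite{MSZ} for the details.
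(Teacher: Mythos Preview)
Your proposal is correct and follows exactly the approach the paper intends: the paper omits the proof, stating only that it ``is also similar to that of Theorem 3.2 in \cite{MSZ},'' and your argument---deducing (MDP 1) from (MDP 1') via compactness and continuity, and (MDP 2) from (MDP 2') via the converging-together lemma (asymptotic equivalence in probability plus continuous mapping)---is precisely that strategy. Your remark about adjusting the radius to $m'=\max\{m,\sqrt{m\kappa_2(1)}\}$ when invoking (MDP 1') is the right way to handle the bookkeeping, and the only other minor polish would be to state explicitly that the controls in (MDP 2) are random, so the convergence $\Upsilon^0(\phi_\epsilon,\varphi_\epsilon 1_{\{|\varphi_\epsilon|\le\beta/a(\epsilon)\}})\to\Upsilon^0(\phi,\varphi)$ is in law (via the continuous mapping theorem applied $\omega$-wise on the compact set), after which the standard ``if $d(X_\epsilon,Y_\epsilon)\to 0$ in probability and $Y_\epsilon\Rightarrow Y$ then $X_\epsilon\Rightarrow Y$'' lemma finishes the job.
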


The proof  of Theorem \ref{TH MDP 2} is also similar to that of Theorem 3.2 in \cite{MSZ} and we omit it here.

\section{Applications}

In this section, we will apply the abstract formulation in Section 4 to establish a LDP and a MDP for MVSDEs in $\mathbb{R}^d$. For this end, we set $K=V=H=E=\mathbb{R}^d,$ $d\in\mathbb{N}$. Then the notations  $C([0,T],K)$, $\Omega$, $W$, $S^m_1$, $\mathcal{S}^m_1$, $\mathcal{L}_2(K,H)$ and many others in Section 4 should be replaced correspondingly. For example $C([0,T],K)$ will be replaced by $C([0,T],\mathbb{R}^d)$, $W$ is a $d$-dimensional standard Brownian motion, $\mathcal{L}_2(K,H)$ is replaced by $\mathcal{L}_2(\mathbb{R}^d,\mathbb{R}^d)=\mathbb{R}^d\otimes \mathbb{R}^d$.

In the Euclidean space $\mathbb{R}^d$, the inner product and norm are denoted by $|\cdot|$ and $\langle\cdot,\cdot\rangle$, respectively. The Dirac measure concentrated at a point $x\in \mathbb{R}^d$ is denoted by $\delta_x$.

Denote by $\mathcal{P}(\mR^d)$ the collection of probability measures on $(\mR^d,\mathcal{B}(\mR^d))$. Define
\begin{align*}
\mathcal{P}_2:=\left\{\mu\in\mathcal{P}(\mR^d):\int_{\mR^d}|y|^2\mu(\dif y)<\infty\right\}.
\end{align*}
Then $\mathcal{P}_2$ is a Polish space equipped with the Wasserstein distance
\begin{align*}
\mW_2(\mu_1,\mu_2):=\inf\limits_{\pi\in\mathfrak{C}(\mu_1,\mu_2)}
\left(\int_{\mR^d\times\mR^d}|x-y|^2\pi(\dif x,\dif y)\right)^{\frac{1}{2}},
\end{align*}
where $\mathfrak{C}(\mu_1,\mu_2)$ is the set of all couplings for $\mu_1$ and $\mu_2$.
\begin{remark}\label{rem W2}
For any $\mathbb{R}^d$-valued random variables $X$ and $Y$,
$$
\mW_2(Law(X),Law(Y))\leq [\mathbb{E}(X-Y)^2]^{\frac{1}{2}}.
$$
\end{remark}

Let $\epsilon>0$. For measurable maps
\begin{align*}
b_\epsilon:[0,T]\times\mR^d\times\mathcal{P}_2\rightarrow\mR^d,\ \
\sigma_\epsilon:[0,T]\times\mR^d\times\mathcal{P}_2\rightarrow\mR^d\otimes\mR^d,
\end{align*}
and
\begin{align*}
G_\epsilon:[0,T]\times\mR^d\times\mathcal{P}_2\times Z\rightarrow\mR^d,
\end{align*}
consider the following MVSDEs on $\mR^d$:
\begin{align}\label{1-1}
   X^\epsilon(t)
=&
  h+\int_0^tb_\epsilon(s,X^\epsilon(s),Law(X^\epsilon(s)))\dif s+\sqrt{\epsilon}\int_0^t\sigma_\epsilon(s,X^\epsilon(s),Law(X^\epsilon(s)))\dif W(s)\nonumber\\
&+
\epsilon\int_0^t\!\!\int_{Z}G_\epsilon(s,X^\epsilon(s-),Law(X^\epsilon(s)),z)\widetilde{N}^{\epsilon^{-1}}(\dif z,\dif s),\ t\in[0,T],
\end{align}
here $h$ is an element of $\mR^d$.

\vskip 0.2cm
 We assume that

 (A0) For any $\epsilon>0$, there exists a unique solution $X^\epsilon$ to (\ref{1-1}).
\vskip 0.5cm
 The aim of this section is to establish the  large and moderate deviation principles for the solutions $\{X^\epsilon,\epsilon>0\}$ to (\ref{1-1}) as $\epsilon$ decreases to 0.

\vskip 0.5cm

Let
\begin{align*}
b:[0,T]\times\mR^d\times\mathcal{P}_2\rightarrow\mR^d,\ \
\sigma:[0,T]\times\mR^d\times\mathcal{P}_2\rightarrow\mR^d\otimes\mR^d,
\end{align*}
and
\begin{align*}
G:[0,T]\times\mR^d\times\mathcal{P}_2\times Z\rightarrow\mR^d,
\end{align*}
be measurable maps.
\vskip 0.5cm

Introduce the following assumptions.
\vskip 0.4cm
 There are $L>0$ and $q\geq 1$ such that for each $t\in[0,T]$, $x,x'\in\mR^d$ and $\mu,\mu'\in\mathcal{P}_2$,
 \begin{itemize}
    \item[(A1)]
\begin{eqnarray*}
  &&\langle x-x',b(t,x,\mu)-b(t,x',\mu)\rangle\leq L|x-x'|^2,\\
  &&|b(t,x,\mu)-b(t,x,\mu')|\leq L\mW_2(\mu,\mu'),\\
  &&|b(t,x,\mu)-b(t,x',\mu)|\leq L(1+|x|^{q-1}+|x'|^{q-1})|x-x'|,\\
  &&\|\sigma(t,x,\mu)-\sigma(t,x',\mu')\|_{\mathcal{L}_2}\leq L(|x-x'|+\mW_2(\mu,\mu')),\\
  &&\int_0^T\left(|b(t,0,\delta_0)|+\|\sigma(t,0,\delta_0)\|^2_{\mathcal{L}_2}\right)\dif  t<\infty.
\end{eqnarray*}
\item[(A2)] \begin{eqnarray*}
  &&\int_Z|G(t,x,\mu,z)-G(t,x',\mu',z)|^2\nu(\dif z)\leq L(|x-x'|^2+\mW_2^2(\mu,\mu')),\\
  &&\int_0^T\!\!\int_Z|G(t,0,\delta_0,z)|^2\nu(\dif z) \dif t<\infty.
\end{eqnarray*}
\item[(A3)] As $\epsilon\downarrow0$, the maps $b_\epsilon$ and $\sigma_\epsilon$ converge uniformly to $b$ and $\sigma$ respectively, that is,  there exist nonnegative constants $\varrho_{b,\epsilon}$ and $\varrho_{\sigma,\epsilon}$ converging to 0  as $\epsilon\downarrow0$ such that
    \begin{eqnarray}
    &&\sup_{(t,x,\mu)\in[0,T]\times\mR^d\times\mathcal{P}_2}
    \Big(
       |b_\epsilon(t,x,\mu)-b(t,x,\mu)|
    \Big)
    \leq
    \varrho_{b,\epsilon},\label{eq Section 3 uniform b}\\
     &&\sup_{(t,x,\mu)\in[0,T]\times\mR^d\times\mathcal{P}_2}
    \Big(
       \|\sigma_\epsilon(t,x,\mu)-\sigma(t,x,\mu)\|_{\mathcal{L}_2}
    \Big)
    \leq
    \varrho_{\sigma,\epsilon}.\label{eq Section 3 uniform sigma}
    \end{eqnarray}

  \item[(A4)] Pathwise uniqueness holds for (\ref{1-1}) as stated in Definition \ref{def 2} with the $J$ replaced by $Law(X^\epsilon)$.
 \end{itemize}

\begin{remark}{
A trivial example for (A0), (A3) and (A4) to hold is that $b_\e=b, \sigma_\e=\sigma, G_\e=G$, i.e. the coefficients do not depend on $\epsilon$. In this particular case, we only need to assume (A1) and (A2).}
\end{remark}

The following result was proved in  Theorem 3.3 in \cite{[RST]}.
\begin{proposition}\label{th1}
Assume that (A1) holds. There exists a unique function $X^0=\{X^0(t),t\in[0,T]\}$ such that
\begin{itemize}
  \item $X^0\in C([0,T],\mathbb{R}^d)$,
  \item  $
    \int_0^T|b(s,X^0(s),Law(X^0(s)))|\dif s <\infty,
        $
  \item $X^0$ satisfies
          \begin{eqnarray}\label{eq 1.2}
   X^0(t)
=
  h+\int_0^tb(s,X^0(s),Law(X^0(s)))\dif s,\ \forall t\in[0,T].
\end{eqnarray}
\end{itemize}
\end{proposition}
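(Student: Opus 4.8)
The plan is to prove existence and uniqueness of the deterministic McKean--Vlasov equation \eqref{eq 1.2} by a fixed-point argument on the space of measure flows, exactly the strategy used for the stochastic equation but simplified since there is no noise. First I would set up the iteration space: let $\mathcal{X}$ denote the set of all laws $\Lambda \in \Pr(C([0,T],\mathbb{R}^d))$ of continuous processes started at $h$ with finite second moment $\sup_{t\le T}\int |x(t)|^2\Lambda(\mathrm{d}x)<\infty$. For a fixed flow of marginals $(\mu_t)_{t\le T}$ coming from some $\Lambda\in\mathcal{X}$, consider the \emph{decoupled} ODE
\[
x(t) = h + \int_0^t b(s,x(s),\mu_s)\,\mathrm{d}s,\qquad t\in[0,T].
\]
The first key step is to show this decoupled ODE has a unique absolutely continuous solution. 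Existence follows from a Picard/Peano-type argument using the local Lipschitz bound $|b(t,x,\mu)-b(t,x',\mu)|\le L(1+|x|^{q-1}+|x'|^{q-1})|x-x'|$ in (A1); a priori boundedness of the solution on $[0,T]$ comes from the one-sided (monotonicity) bound $\langle x-x',b(t,x,\mu)-b(t,x',\mu)\rangle \le L|x-x'|^2$ together with $\int_0^T|b(t,0,\delta_0)|\,\mathrm{d}t<\infty$, which gives $\frac{\mathrm{d}}{\mathrm{d}t}|x(t)|^2 \le C(1+|x(t)|^2 + \mathcal{W}_2^2(\mu_t,\delta_0))$ and hence, by Gronwall, a uniform bound in terms of $\sup_t\int|y|^2\mu_t(\mathrm{d}y)$. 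Uniqueness for the decoupled ODE also uses the monotonicity bound: for two solutions $x,x'$, $\frac{\mathrm{d}}{\mathrm{d}t}|x(t)-x'(t)|^2 = 2\langle x(t)-x'(t), b(t,x(t),\mu_t)-b(t,x'(t),\mu_t)\rangle \le 2L|x(t)-x'(t)|^2$, so $x\equiv x'$ by Gronwall.

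Next I would define the solution map $\Phi:\mathcal{X}\to\mathcal{X}$ by $\Phi(\Lambda) := \delta_{x^\Lambda}$, the Dirac mass on the unique solution $x^\Lambda$ of the decoupled ODE driven by the marginals $\mu_t^\Lambda$ of $\Lambda$; note the image consists of Dirac measures on deterministic paths, so effectively $\Phi$ acts on $C([0,T],\mathbb{R}^d)$ itself, with $\mathcal{W}_2(\mu_t^{\delta_y},\mu_t^{\delta_{y'}}) = |y(t)-y'(t)|$. The contraction estimate is the heart of the argument: given two flows $\mu_t, \mu_t'$ (Dirac masses at $y(t),y'(t)$) with resulting solutions $x(t),x'(t)$, write
\[
x(t)-x'(t) = \int_0^t\bigl(b(s,x(s),\mu_s)-b(s,x'(s),\mu_s)\bigr)\mathrm{d}s + \int_0^t\bigl(b(s,x'(s),\mu_s)-b(s,x'(s),\mu_s')\bigr)\mathrm{d}s,
\]
differentiate $|x(t)-x'(t)|^2$, apply the monotonicity bound to the first term and the Lipschitz-in-measure bound $|b(s,x',\mu_s)-b(s,x',\mu_s')|\le L\mathcal{W}_2(\mu_s,\mu_s')= L|y(s)-y'(s)|$ to the second, obtaining
\[
\tfrac{\mathrm{d}}{\mathrm{d}t}|x(t)-x'(t)|^2 \le (2L+1)|x(t)-x'(t)|^2 + L^2|y(t)-y'(t)|^2 .
\]
Gronwall then yields $\sup_{t\le T}|x(t)-x'(t)|^2 \le C_T\sup_{t\le T}|y(t)-y'(t)|^2$ with $C_T$ depending only on $L,T$; this is not yet a contraction, but the standard remedy is to iterate $\Phi$ and observe that $\Phi^n$ contracts: a time-shifted Gronwall argument (or the Picard trick of carrying the $\frac{(Ct)^n}{n!}$ factor through the iteration) shows $\sup_{t\le T}|x^{(n)}(t)-\tilde x^{(n)}(t)|^2 \le \frac{(C_TT)^n}{n!}\sup_{t\le T}|y(t)-\tilde y(t)|^2$, so $\Phi^n$ is a strict contraction on the complete metric space $(C([0,T],\mathbb{R}^d),\|\cdot\|_\infty)$ for $n$ large. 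Banach's fixed point theorem then gives a unique fixed point $X^0$, which by construction satisfies all three bullet points: continuity is clear from the integral equation, integrability $\int_0^T|b(s,X^0(s),Law(X^0(s)))|\,\mathrm{d}s<\infty$ follows from the growth bound on $b$ plus the uniform bound on $\sup_t|X^0(t)|$, and \eqref{eq 1.2} holds by the fixed-point property.

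The main obstacle I anticipate is not conceptual but bookkeeping: handling the superlinear growth $q\ge 1$ of $b$ in $x$ while keeping all a priori bounds uniform, since the naive Lipschitz-iteration constant $1+|x|^{q-1}+|x'|^{q-1}$ is unbounded. The resolution is to first establish, once and for all, an $L^\infty$ bound $R_T := \sup_{t\le T}|X^0(t)|<\infty$ using only the monotone (dissipative-type) estimate — which does \emph{not} see the superlinear term — and then to run the contraction argument on the closed ball of radius $R_T$ in $C([0,T],\mathbb{R}^d)$, on which the local Lipschitz constant is uniformly bounded by $L(1+2R_T^{q-1})$. Since the statement only claims existence and uniqueness and the proof was already given in \cite{[RST]}, in the write-up I would keep this brief, flag the two ingredients (monotonicity for the a priori bound and uniqueness, Lipschitz-in-measure for the fixed-point contraction), and refer to \cite{[RST]} for the remaining routine details.
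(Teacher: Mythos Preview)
The paper does not give a proof of this proposition at all; it simply states that the result was proved in Theorem~3.3 of \cite{[RST]} and moves on. Your sketch of the fixed-point argument (monotonicity for the a~priori bound and uniqueness of the decoupled ODE, Lipschitz-in-measure for the contraction in the law variable) is the correct strategy and is essentially what is carried out in \cite{[RST]}; since you already plan to cite that reference in the write-up, you are aligned with the paper's treatment.
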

\vskip 0.5cm
Note that $X^0$ is a deterministic path and
$
Law(X^0(s))=\delta_{X^0(s)}.
$
In the sequel, we will always use  $X^0$ to denote the unique solution to (\ref{eq 1.2}).

\subsection{Large deviations principle}

\vskip 0.3cm
In order to obtain the LDP, we need the following notations and assumptions.

Set
$$
L^2(\nu)
:=
\{f:Z\rightarrow\mathbb{R}
   |~f \text{ is }\mathcal{B}(Z)/\mathcal{B}(\mathbb{R})\text{-measurable and }\int_Z|f(z)|^2\nu(\dif z)<\infty\},
$$
and
\begin{align*}
\mathcal{H}=\Big\{g:Z\rightarrow\mathbb{R_+}| g\text{ is Borel measurable and} \text{ there exists }c>0\text{ such that }\\
\ \ \ \ \ \ \ \ \ \int_O e^{cg^2(z)}\nu(\dif z)<\infty\text{ for all }O\in\mathcal{B}(Z) \text{ with }\nu(O)<\infty.\Big\}
\end{align*}
\vskip 0.4cm

\begin{itemize}
  \item[(B1)] There exist $L_1, L_2, L_3\in\mathcal{H}\cap L^2(\nu)$ such that for all $t\in[0,T]$, $x,~x'\in\mR^d$, $\mu, ~\mu'\in\mathcal{P}_2$ and $z\in Z$,
\begin{quote}
\begin{align*}
|G(t,x,\mu,z)-G(t,x',\mu',z)|\leq L_1(z)\left(|x-x'|+\mW_2(\mu,\mu')\right),
\end{align*}
\begin{align*}
|G(t,0,\delta_0,z)|\leq L_2(z).
\end{align*}
\end{quote}
and there exists nonnegative constant $\varrho_{G,\epsilon}$ converging to $0$ such that
\begin{align*}
\sup_{(t,x,\mu)\in[0,T]\times\mR^d\times\mathcal{P}_2}
    |G_\epsilon(t,x,\mu,z)-G(t,x,\mu,z)|
    \leq
    \varrho_{G,\epsilon}L_3(z).
\end{align*}
\end{itemize}
\vskip 0.4cm
It is easy to see that (B1) implies  (A2).

Before stating the main result in this subsection, we need the following result.
\vskip 0.3cm
Recalling $S^m_1$, $S^m_2$ and $S$ defined in Section 2, we have
\begin{proposition}\label{prop 5}
Assume that (A1) and (B1) hold. Then for any $u=(\phi,\psi)\in S$, there exists a unique solution $Y^u=\{Y^u(t),t\in[0,T]\}\in C([0,T],\mathbb{R}^d)$  to the following equation:
\begin{align}\label{eq rate LDP 1}
Y^u(t)
=&h+\int_0^tb(s,Y^u(s),Law(X^0(s)))\dif s
+\int_0^t\sigma(s,Y^u(s),Law(X^0(s)))\phi(s)\dif s\nonumber\\
&+\int_0^t\!\!\int_{Z} G(s,Y^u(s),Law(X^0(s)),z)(\psi(s,z)-1)\nu(\dif z)\dif s,\ \ t\in[0,T].
\end{align}
Moreover, for any $m>0$,
\begin{align}\label{eq ll}
\sup_{u=(\phi,\psi)\in S_1^m\times S_2^m}\sup_{t\in[0,T]}|Y^u(t)|<\infty.
\end{align}
\end{proposition}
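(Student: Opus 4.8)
The plan is to treat \eqref{eq rate LDP 1} as what it really is: a Carath\'eodory ODE. Indeed, since $X^0$ is the deterministic path furnished by Proposition~\ref{th1}, $Law(X^0(s))=\delta_{X^0(s)}$, so \eqref{eq rate LDP 1} reads $\dot Y^u(t)=F^u(t,Y^u(t))$, $Y^u(0)=h$, with
\[
F^u(t,x):=b(t,x,\delta_{X^0(t)})+\sigma(t,x,\delta_{X^0(t)})\phi(t)+\int_Z G(t,x,\delta_{X^0(t)},z)\big(\psi(t,z)-1\big)\nu(\dif z).
\]
First I would record the two structural facts that drive the argument. Pairing the first and fourth lines of (A1) (with $\mu=\mu'$) and the first line of (B1) (with $\mu=\mu'$) against $x-x'$ gives the one-sided Lipschitz estimate
\[
\langle x-x',\,F^u(t,x)-F^u(t,x')\rangle\le \kappa^u(t)\,|x-x'|^2,\qquad \kappa^u(t):=L+L|\phi(t)|+\int_Z L_1(z)\,|\psi(t,z)-1|\,\nu(\dif z),
\]
while the third line of (A1), the Lipschitz bound on $\sigma$ and the estimate $|G(t,0,\delta_{X^0(t)},z)|\le L_2(z)+L_1(z)\|X^0\|_\infty$ give local Lipschitz continuity of $x\mapsto F^u(t,x)$ together with $\int_0^T|F^u(t,0)|\,\dif t<\infty$. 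The only nonelementary ingredient is that $\int_0^T\kappa^u(t)\,\dif t<\infty$ \emph{uniformly} over $u=(\phi,\psi)\in S_1^m\times S_2^m$, which reduces to the bound $\sup_{\psi\in S_2^m}\int_0^T\!\int_Z L_i(z)\,|\psi(s,z)-1|\,\nu(\dif z)\,\dif s=:C_i(m)<\infty$ for $i=1,2,3$; this is the standard estimate relating the class $\mathcal H\cap L^2(\nu)$ to the entropy functional $Q_2$ (mechanism: split $Z$ into $\{\psi\le 2\}$ and $\{\psi>2\}$, use $\ell(\psi)\ge c_0(\psi-1)^2$ on $[0,2]$ with Cauchy--Schwarz on the first piece, the Young inequality $ab\le \ell(a)+\ell^*(b)$ with $\ell^*(y)=e^y-1$ on the second, and the Chebyshev bounds $\nu(\{\psi>2\})\le Q_2(\psi)/(2\log 2-1)$ and $\nu(\{L_i>\delta\})\le \delta^{-2}\|L_i\|^2_{L^2(\nu)}$, the latter converting the exponential integrability built into $\mathcal H$ into a finite quantity). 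I expect this (mild, but fiddly) estimate to be the main obstacle; I would either import it from \cite{BDG,Budhiraja-Dupuis-Maroulas.,Budhiraja-Chen-Dupuis} or include the short computation.

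Granting these facts, existence is obtained by truncation. Replacing $b(t,x,\delta_{X^0(t)})$ by $b(t,\pi_N(x),\delta_{X^0(t)})$, where $\pi_N$ is the radial retraction of $\mR^d$ onto $\{|x|\le N\}$, produces a coefficient $F^{u,N}$ that is globally Lipschitz in $x$ with an $L^1([0,T])$ Lipschitz constant, so Carath\'eodory's theorem (equivalently, a contraction argument in $C([0,T],\mR^d)$ with a weighted sup-norm) yields a unique $Y^{u,N}\in C([0,T],\mR^d)$. Testing with $Y^{u,N}$ and setting $g(t)=1+|Y^{u,N}(t)|^2$, the elementary bound $2|Y^{u,N}(t)|\le g(t)$ and the one-sided Lipschitz estimate give $g'(t)\le \widetilde a^u(t)\,g(t)$ with $\widetilde a^u(t)=|F^u(t,0)|+2\kappa^u(t)\in L^1([0,T])$ — here one uses $\int_0^T|b(t,0,\delta_0)|\,\dif t<\infty$, $\int_0^T\|\sigma(t,0,\delta_0)\|^2_{\mathcal L_2}\,\dif t<\infty$, $\phi\in L^2([0,T],\mR^d)$, boundedness of $X^0$ on $[0,T]$, and the bounds $C_i(m)$ — and the monotonicity and growth survive truncation (for $|x|>N$ write $x=(|x|/N)\pi_N(x)$ to get $\langle x,b(t,\pi_N(x),\mu)-b(t,0,\mu)\rangle\le L|x|^2$). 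Gronwall then gives $\sup_{t\in[0,T]}|Y^{u,N}(t)|^2\le (1+|h|^2)\exp\!\big(\int_0^T\widetilde a^u(s)\,\dif s\big)-1=:R(m)^2$, a bound independent of $N$ and of $u\in S_1^m\times S_2^m$. Choosing $N>R(m)$, the path $Y^{u,N}$ never leaves $\{|x|\le N\}$, so $\pi_N(Y^{u,N})\equiv Y^{u,N}$ and $Y^{u,N}$ solves the untruncated equation \eqref{eq rate LDP 1}; this simultaneously establishes \eqref{eq ll}.

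Uniqueness is immediate from the one-sided Lipschitz estimate: for two solutions $Y_1,Y_2$ one has $\tfrac{\dif}{\dif t}|Y_1(t)-Y_2(t)|^2\le 2\kappa^u(t)\,|Y_1(t)-Y_2(t)|^2$ with $Y_1(0)-Y_2(0)=0$, so $Y_1\equiv Y_2$ by Gronwall. Finally, the measurability in $t$ and continuity in $x$ of $F^u$ needed to place the problem in the Carath\'eodory framework (and the a.e.\ finiteness of the $Z$-integral defining $F^u$) follow from joint measurability of $b,\sigma,G$, continuity of $t\mapsto\delta_{X^0(t)}$ in $\mathcal P_2$, the domination $|G(t,x,\delta_{X^0(t)},z)|\,|\psi(t,z)-1|\le \big(L_2(z)+L_1(z)(|x|+\|X^0\|_\infty)\big)|\psi(t,z)-1|$ together with the $C_i(m)$ bound, and dominated convergence; since Carath\'eodory solutions are absolutely continuous, automatically $Y^u\in C([0,T],\mR^d)$.
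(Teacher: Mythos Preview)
Your proposal is correct and follows essentially the same approach as the paper: both identify the key uniform estimate $\sup_{\psi\in S_2^m}\int_0^T\!\int_Z L_i(z)|\psi-1|\,\nu(\dif z)\,\dif s<\infty$ (the paper imports it as (5.11) from \cite{Budhiraja-Chen-Dupuis}), verify the one-sided/local Lipschitz structure and the $L^1$ bounds at $x=0$, invoke standard ODE theory for existence/uniqueness, and obtain \eqref{eq ll} by differentiating $|Y^u(t)|^2$ and applying Gronwall. The only cosmetic difference is that the paper dispatches existence with the phrase ``classical fixed point arguments'' while you spell out the truncation/Carath\'eodory step needed because $b$ is only locally Lipschitz; your version is more explicit but not a different route.
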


\begin{proof}

Without loss of generality, we assume that $u=(\phi,\psi)\in S_1^m\times S_2^m$.

\vskip 0.3cm
We first prove that there exists a unique solution to (\ref{eq rate LDP 1}). Set
$$
b_u(t,x):=b(t,x,Law(X^0(t))),
\ \ \ \
\sigma_\phi(t,x):=\sigma(t,x,Law(X^0(t)))\phi(t),
$$
and
$$
G_\psi(t,x,z):=G(t,x,Law(X^0(t)),z)(\psi(t,z)-1),\ \
G^Z_\psi(t,x):=\int_ZG_\psi(t,x,z)\nu(\dif z).
$$
By (A1), Remark \ref{rem W2}, and the fact that $X^0\in C([0,T],\mathbb{R}^d)$, we have that for each $t\in[0,T]$, $x,x'\in\mR^d$,
\begin{align}\label{eq bu 1}
&\langle x-x',b_u(t,x)-b_u(t,x')\rangle\leq L|x-x'|^2,\\
 &|b_u(t,x)-b_u(t,x')|\leq L(1+|x|^{q-1}+|x'|^{q-1})|x-x'|,
\end{align}
\begin{align}\label{eq sigm phi}
|\sigma_\phi(t,x)-\sigma_\phi(t,x')|
\leq L|x-x'||\phi(t)|,
\end{align}
\begin{align}\label{eq bu 2}
\int_0^T|b_u(t,0)|\dif t
\leq&\int_0^T|b(t,0,Law(X^0(t)))-b(t,0,\delta_0)|\dif t
+\int_0^T|b(t,0,\delta_0)|\dif t\nonumber\\
\leq&L\int_0^T\mW_2(Law(X^0(t)),\delta_0)\dif t
+\int_0^T|b(t,0,\delta_0)|\dif t\nonumber\\
\leq&L\int_0^T|X^0(t)|\dif t+\int_0^T|b(t,0,\delta_0)|\dif t
<\infty,
\end{align}
and
\begin{align}\label{eq sigm phi 1}
&\int_0^T|\sigma_\phi(t,0)|\dif t\cr
\leq&\int_0^T|\sigma(t,0,Law(X^0(t)))\phi(t)-\sigma(t,0,\delta_0)\phi(t)|\dif t
   +\int_0^T|\sigma(t,0,\delta_0)\phi(t)|\dif t\nonumber\\
\leq&L\int_0^T\mW_2(Law(X^0(t)),\delta_0)|\phi(t)|\dif t+
    \int_0^T|\sigma(t,0,\delta_0)\phi(t)|\dif t\nonumber\\
\leq&L\sup_{t\in[0,T]}|X^0(t)|\Big(\int_0^T|\phi(t)|^2\dif t+T\Big)
  \!+\!\int_0^T\|\sigma(t,0,\delta_0)\|^2_{\mathcal{L}_2}\dif t
  \!+\!\int_0^T|\phi(t)|^2\dif t\cr<&\infty,
\end{align}
where we use the condition $\phi\in S_1^m$.

By (B1), Remark \ref{rem W2}, and the fact that $X^0\in C([0,T],\mathbb{R}^d)$, we get for each $t\in[0,T]$, $x,x'\in\mR^d$,
\begin{align}\label{eq GZ}
&|G^Z_\psi(t,x)-G^Z_\psi(t,x')|\cr
\leq&\int_Z|G(t,x,Law(X^0(t)),z)(\psi(t,z)-1)-G(t,x',Law(X^0(t)),z)(\psi(t,z)-1)|\nu(\dif z)\cr
\leq&\int_ZL_1(z)|\psi(t,z)-1|\nu(\dif z)|x-x'|,
\end{align}
and
\begin{align}\label{eq GZ 0}
&\int_0^T|G^Z_\psi(t,0)|\dif t\cr
\leq&\int_0^T\!\!\int_Z|G(t,0,Law(X^0(t)),z)(\psi(t,z)-1)-G(t,0,\delta_0,z)
(\psi(t,z)-1)|\nu(\dif z)\dif t\cr
&+\int_0^T\!\!\int_Z|G(t,0,\delta_0,z)(\psi(t,z)-1)|\nu(\dif z)\dif t\cr
\leq&\int_0^T\!\!\int_ZL_1(z)\mW_2(Law(X^0(t)),\delta_0)|\psi(t,z)-1|\nu(\dif z)\dif t\cr
&+\int_0^T\!\!\int_ZL_2(z)|\psi(t,z)-1|\nu(\dif z)\dif t\cr
\leq&\sup_{t\in[0,T]}|X^0(t)|\int_0^T\!\!\int_ZL_1(z)|\psi(t,z)-1|\nu(\dif z)\dif t\cr
&+\int_0^T\!\!\int_ZL_2(z)|\psi(t,z)-1|\nu(\dif z)\dif t.
\end{align}
By Lemma 3.4 in \cite{Budhiraja-Chen-Dupuis}, we have
\begin{align}\label{CK}
\sup\limits_{\varphi\in S^m_2}
\int_0^T\!\!\int_Z(L_1(z)+L_2(z)+L_3(z))|\varphi(t,z)-1|\nu(\dif z)\dif t<\infty.
\end{align}
In view of  (\ref{eq bu 1})--(\ref{CK}), using the classical fixed point arguments, we
can deduce that there exists a unique function $Y^u=\{Y^u(t),t\in[0,T]\}\in C([0,T],\mathbb{R}^d)$ which is the solution to (\ref{eq rate LDP 1}).
\vskip 0.3cm
Next we  prove (\ref{eq ll}). By the chain rule,
\begin{align}\label{YU-1}
|Y^u(t)|^2=&|h|^2+2\int_0^t\<b(s,Y^u(s),Law(X^0(s))),Y^u(s)\>\dif s\cr
&+2\int_0^t\<\sigma(s,Y^u(s),Law(X^0(s)))\phi(s),Y^u(s)\>\dif s\cr
&+2\int_0^t\!\!\int_Z\<G(s,Y^u(s),Law(X^0(s)),z)(\psi(s,z)-1),Y^u(s)\>\nu(\dif z)\dif s\cr
:=&|h|^2+I_1(t)+I_2(t)+I_3(t).
\end{align}
By (A1) and Remark \ref{rem W2},
\begin{align}\label{YU-2}
I_1(t)=&2\int_0^t\<b(s,Y^u(s),Law(X^0(s)))-b(s,0,Law(X^0(s))),Y^u(s)\>\dif s\cr
&+2\int_0^t\<b(s,0,Law(X^0(s)))-b(s,0,\delta_0),Y^u(s)\>\dif s\cr
&+2\int_0^t\<b(s,0,\delta_0),Y^u(s)\>\dif s\cr
\leq&2L\int_0^t|Y^u(s)|^2\dif s+2L\int_0^t|Y^u(s)||X^0(s)|\dif s+2\int_0^t|b(s,0,\delta_0)||Y^u(s)|\dif s\cr
\leq&\int_0^t\!\Big(3L+|b(s,0,\delta_0)|\Big)|Y^u(s)|^2\dif s\!+\!L\int_0^t|X^0(s)|^2ds\!+\!\int_0^t\!|b(s,0,\delta_0)|\dif s,
\end{align}
and
%
\begin{align}\label{YU-3}
&I_2(t)\cr
=&
2\int_0^t\<\sigma(s,Y^u(s),Law(X^0(s)))\phi(s)-\sigma(s,0,\delta_0)\phi(s),Y^u(s)\>\dif s\cr
&+2\int_0^t\<\sigma(s,0,\delta_0)\phi(s),Y^u(s)\>\dif s\cr
\leq&2L\int_0^t(|Y^u(s)|+|X^0(s)|)|\phi(s)||Y^u(s)|\dif s
+2\int_0^t\|\sigma(s,0,\delta_0)\|_{\mathcal{L}_2}|\phi(s)||Y^u(s)|\dif s\nonumber\\
\leq&L\int_0^t|Y^u(s)|^2\Big(1+|\phi(s)|^2\Big)\dif s
+L\sup_{l\in[0,T]}|X^0(l)|\int_0^t|Y^u(s)|^2+|\phi(s)|^2\dif s\nonumber\\
&+\int_0^t\Big(\|\sigma(s,0,\delta_0)\|^2_{\mathcal{L}_2}+|\phi(s)|^2\Big)\Big(|Y^u(s)|^2+1\Big)\dif s\nonumber\\
\leq&
(L+1)\int_0^t|Y^u(s)|^2\Big(1+\sup_{l\in[0,T]}|X^0(l)|+|\phi(s)|^2+\|\sigma(s,0,\delta_0)\|^2_{\mathcal{L}_2}\Big)\dif s\nonumber\\
&+
(1+L\sup_{l\in[0,T]}|X^0(l)|)\int_0^t\|\sigma(s,0,\delta_0)\|^2_{\mathcal{L}_2}+|\phi(s)|^2\dif s.
\end{align}
By (B1) and Remark \ref{rem W2},
\begin{align}\label{YU-4}
&I_3(t)\nonumber\\
=&
2\int_0^t\!\!\int_Z\<\Big(G(s,Y^u(s),Law(X^0(s)),z)-G(s,0,\delta_0,z)\Big)(\psi(s,z)-1),Y^u(s)\>\nu(\dif z)\dif s\nonumber\\
&+
2\int_0^t\!\!\int_Z\<G(s,0,\delta_0,z)(\psi(s,z)-1),Y^u(s)\>\nu(\dif z)\dif s\nonumber\\
\leq&
2\int_0^t\!\!\int_Z L_1(z)\Big(|Y^u(s)|+|X^0(s)|\Big)|\psi(s,z)-1||Y^u(s)|\nu(\dif z)\dif s\nonumber\\
&+
2\int_0^t\!\!\int_Z L_2(z)|\psi(s,z)-1||Y^u(s)|\nu(\dif z)\dif s\nonumber\\
\leq&
\int_0^t\!\!\int_Z|Y^u(s)|^2\Big(3L_1(z)+L_2(z)\Big)|\psi(s,z)-1|\nu(\dif z)\dif s\\
&+
\sup_{l\in[0,T]}|X^0(l)|^2\int_0^t\!\!\int_ZL_1(z)|\psi(s,z)-1|\nu(\dif z)\dif s
+
\int_0^t\!\!\int_ZL_2(z)|\psi(s,z)-1|\nu(\dif z)\dif s.\nonumber
\end{align}
Set
\begin{align}\label{eq theta 1}
\Theta_1(u)
:=&
|h|^2+\int_0^TL|X^0(s)|^2+|b(s,0,\delta_0)|\dif s\cr
&+(1+L\sup_{l\in[0,T]}|X^0(l)|)\int_0^T\|\sigma(s,0,\delta_0)\|^2_{\mathcal{L}_2}+|\phi(s)|^2\dif s\cr
&+
\sup_{l\in[0,T]}|X^0(l)|^2\int_0^T\!\!\int_ZL_1(z)|\psi(s,z)-1|\nu(\dif z)\dif s\\
&+
\int_0^T\!\!\int_ZL_2(z)|\psi(s,z)-1|\nu(\dif z)\dif s,\nonumber
\end{align}
and
\begin{align}\label{eq theta 2}
\Theta_2(u)
:=&
\int_0^T\Big(3L+|b(s,0,\delta_0)|\Big)\dif s+
\int_0^T\!\!\int_Z\Big(3L_1(z)+L_2(z)\Big)|\psi(s,z)-1|\nu(\dif z)\dif s\nonumber\\
&+(L+1)\int_0^T\Big(1+\sup_{l\in[0,T]}|X^0(l)|+|\phi(s)|^2
+\|\sigma(s,0,\delta_0)\|^2_{\mathcal{L}_2}\Big)\dif s.
\end{align}
It follows from (A1), (B1) and (\ref{CK}) that there exists a constant $\widetilde{C}_m$ such that
\begin{align}\label{eq theta 3}
\sup\limits_{u=(\phi,\psi)\in S^m_1\times S^m_2}\Big(\Theta_1(u)+\Theta_2(u)\Big)\leq \widetilde{C}_m<\infty.
\end{align}
Then by combining (\ref{YU-1})-(\ref{eq theta 3}) together and  using Gronwall's inequality, we get
\begin{align}\label{ub}
\sup\limits_{u=(\phi,\psi)\in S^m_1\times S^m_2}\sup\limits_{t\in[0,T]}|Y^u(t)|^2\leq \sup\limits_{u=(\phi,\psi)\in S^m_1\times S^m_2}\Theta_1(u) e^{\Theta_2(u)}
\leq\widetilde{C}_m e^{\widetilde{C}_m}<\infty,
\end{align}
which completes the proof.

\end{proof}

We now state the main result in this subsection.
\begin{theorem}\label{Th ex 1}
Assume that (A0), (A1), (B1), (A3) and (A4) hold. Then the solutions $\{X^\epsilon,~\e>0\}$ to (\ref{1-1}) satisfy a LDP on $D([0,T],\mathbb{R}^d)$ with speed $\epsilon$ and the good
rate function $I$ given by
\begin{eqnarray}\label{rate ex LDP}
I(g):=\inf\{Q_1(\phi)+Q_2(\psi):u=(\phi,\psi)\in S,~Y^u=g\},~g\in D([0,T],\mathbb{R}^d),
\end{eqnarray}
where for $u=(\phi,\psi)\in S$, $Y^u$ is the unique solution of (\ref{eq rate LDP 1}).
Here we use the convention that the infimum of an empty set is $\infty$.
\end{theorem}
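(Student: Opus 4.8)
The plan is to verify the hypotheses of the abstract criterion, Theorem~\ref{Thm zhunze 2}, by exhibiting the skeleton map $\Gamma^0:S\to D([0,T],\mathbb{R}^d)$ defined by $\Gamma^0(\phi,\psi):=Y^u$, where $Y^u$ is the unique solution of \eqref{eq rate LDP 1} furnished by Proposition~\ref{prop 5} (note that $Y^u$ actually lies in $C([0,T],\mathbb{R}^d)\subset D([0,T],\mathbb{R}^d)$). We must check: (i) the measurability of $\Gamma^0$ and the continuity statement (b) of Theorem~\ref{Thm zhunze 2}, i.e. if $(\phi_n,\psi_n)\to(\phi,\psi)$ in $S_1^m\times S_2^m$ then $Y^{u_n}\to Y^u$ in $D([0,T],\mathbb{R}^d)$; and (ii) the key convergence (a), namely that for any $\{u_\epsilon=(\phi_\epsilon,\psi_\epsilon)\}\subset\mathcal{S}_1^m\times\mathcal{S}_2^m$ and any $\delta>0$, $P(d(Z^{u_\epsilon},\Gamma^0(\phi_\epsilon,\psi_\epsilon))>\delta)\to 0$. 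Here $Z^{u_\epsilon}$ is the controlled process solving \eqref{eq X0 2-second} with $Law(X^\epsilon)$ in the coefficients, which is legitimate because of Theorem~\ref{Thm 02}: the crucial point, emphasized in the introduction, is that the law appearing in the controlled equation is the \emph{unperturbed} deterministic measure $Law(X^\epsilon)$, not $Law(Z^{u_\epsilon})$.

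For part (i), I would proceed by a Gronwall-type stability estimate directly on \eqref{eq rate LDP 1}. Writing the difference $Y^{u_n}-Y^u$ and applying the chain rule for $|Y^{u_n}(t)-Y^u(t)|^2$, the monotonicity and Lipschitz bounds in (A1) and (B1) control the "diagonal" terms, while the differences of the controls $\phi_n-\phi$ (weakly in $L^2([0,T],\mathbb{R}^d)$) and $\psi_n-\psi$ (in the topology of $S_2^m$) enter through the drift-type terms $\sigma(s,\cdot,Law(X^0(s)))(\phi_n-\phi)(s)$ and $\int_Z G(s,\cdot,Law(X^0(s)),z)(\psi_n-\psi)(s,z)\nu(\dif z)$. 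Because the controls appear only linearly and multiply coefficients that are already known to be bounded along the solution (by the uniform bound \eqref{eq ll}), these terms can be shown to vanish using the weak convergence together with the compactness/uniform-integrability properties of $S_1^m$ and $S_2^m$ (Lemma 3.4 of \cite{Budhiraja-Chen-Dupuis} and the $\ell$-function bounds), after first establishing a uniform modulus of continuity for the family $\{Y^{u_n}\}$ so one can pass from weak to the required convergence. Measurability of $\Gamma^0$ follows from the same well-posedness and the measurable dependence of the fixed point on the data.

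For part (ii), the heart of the matter, I would introduce the intermediate skeleton process $\bar Z^{u_\epsilon}:=\Gamma^0(\phi_\epsilon,\psi_\epsilon)$, i.e. the solution of \eqref{eq rate LDP 1} with the \emph{random} controls $(\phi_\epsilon,\psi_\epsilon)$ frozen, and estimate $d(Z^{u_\epsilon},\bar Z^{u_\epsilon})$. One first replaces, in the coefficients of \eqref{eq X0 2-second}: $b_\epsilon,\sigma_\epsilon,G_\epsilon$ by $b,\sigma,G$ (controlled by $\varrho_{b,\epsilon},\varrho_{\sigma,\epsilon},\varrho_{G,\epsilon}\to0$ from (A3),(B1)); and $Law(X^\epsilon)$ by $Law(X^0)=\delta_{X^0(\cdot)}$ (controlled by the convergence $X^\epsilon\to X^0$, which under (A1),(A2) one establishes via a standard moment estimate giving $\mathbb{E}\sup_t|X^\epsilon(t)-X^0(t)|^2\to0$, hence $\mathcal{W}_2(Law(X^\epsilon(t)),\delta_{X^0(t)})\to0$ by Remark~\ref{rem W2}). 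The remaining difference between the two equations consists of the genuinely stochastic terms that are multiplied by $\sqrt{\epsilon}$ (the Itô integral $\sqrt\epsilon\int\sigma\,\dif W$) and the compensated Poisson term $\epsilon\int\!\!\int G\,\widetilde N^{\epsilon^{-1}\psi_\epsilon}$; both are shown to converge to $0$ in probability by Burkholder–Davis–Gundy and the bounds $\int_0^T\!\!\int_Z|G(s,\cdot,z)|^2\psi_\epsilon(s,z)\nu(\dif z)\dif s$ being controlled uniformly on $\mathcal{S}_2^m$ (again via $\ell$ and Lemma 3.4 of \cite{Budhiraja-Chen-Dupuis}), so that $\epsilon^2\cdot\epsilon^{-1}\cdot(\text{bounded})\to 0$. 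Collecting these, a final Gronwall argument on $\mathbb{E}[d(Z^{u_\epsilon},\bar Z^{u_\epsilon})\wedge 1]$ or on a stopped second moment yields the claim.

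The main obstacle I anticipate is the careful handling of the Poisson-noise terms in part (ii): unlike the Gaussian case, the intensity $\epsilon^{-1}\nu$ blows up, and one must exploit the precise exponential-integrability structure encoded in the class $\mathcal{H}$ and the $\ell$-function estimates to show that the "small-jump/large-jump" decomposition of $\epsilon\int_0^t\!\!\int_Z G_\epsilon(\cdot,z)\widetilde N^{\epsilon^{-1}\psi_\epsilon}(\dif z,\dif s)$ genuinely tends to zero while the compensator correction $\int_0^t\!\!\int_Z G(\cdot,z)(\psi_\epsilon(s,z)-1)\nu(\dif z)\dif s$ correctly survives and matches the skeleton term; this, together with keeping rigorous track of the fact that the law in the coefficients is $Law(X^\epsilon)$ rather than $Law(Z^{u_\epsilon})$, is where the argument genuinely departs from the classical (non-McKean–Vlasov) proofs and from the flawed approach of \cite{[CHM]}. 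Once (a) and (b) are in place, Theorem~\ref{Thm zhunze 2} delivers the LDP with rate function \eqref{rate ex LDP} directly, since $\Gamma^0(\phi,\psi)=Y^u$.
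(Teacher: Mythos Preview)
Your overall strategy matches the paper's: apply Theorem~\ref{Thm zhunze 2} with $\Gamma^0(u)=Y^u$ from Proposition~\ref{prop 5}, verify the continuity condition (b) (this is Proposition~\ref{Yu-con}) and the convergence condition (a) (this is Proposition~\ref{lem LDP 2}, supported by Lemma~\ref{yey0} which gives the quantitative rate $\mathbb{E}\sup_t|X^\epsilon(t)-X^0(t)|^2\leq C_T(\epsilon+\varrho_{b,\epsilon}^2+\epsilon\varrho_{\sigma,\epsilon}^2+\epsilon\varrho_{G,\epsilon}^2)$). Your sketch for part (ii) is essentially the paper's argument: It\^o's formula on $|Z^{u_\epsilon}-Y^{u_\epsilon}|^2$, replace $b_\epsilon,\sigma_\epsilon,G_\epsilon$ by $b,\sigma,G$ and $Law(X^\epsilon)$ by $\delta_{X^0}$ at the cost of $\varrho$-terms and Lemma~\ref{yey0}, kill the martingale terms by BDG, and close with Gronwall.

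For part (i), however, your proposed ``direct Gronwall on $|Y^{u_n}(t)-Y^u(t)|^2$'' has a circularity that you do not clearly resolve. After splitting the $\sigma$-term you are left with a cross term of the form
\[
2\int_0^t \big\langle Y^{u_n}(s)-Y^u(s),\ \sigma(s,Y^u(s),Law(X^0(s)))(\phi_n(s)-\phi(s))\big\rangle\,\dif s,
\]
and similarly for $G$ with $(\psi_n-\psi)$. Since $\phi_n\to\phi$ only \emph{weakly} in $L^2$, this does not vanish via Cauchy--Schwarz, and you cannot invoke weak convergence directly because the test function $Y^{u_n}(s)-Y^u(s)$ depends on $n$; a uniform modulus of continuity by itself does not break this loop. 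The paper avoids this by using an Arzel\`a--Ascoli compactness argument instead of a difference estimate: first show $\{Y^{u_n}\}$ is uniformly bounded (by~\eqref{eq ll}) and equicontinuous (via~\eqref{eq3 small {A}}--\eqref{eq3 small}), hence precompact in $C([0,T],\mathbb{R}^d)$; then extract any subsequential limit $\gamma$ and, \emph{with $\gamma$ fixed}, pass to the limit term by term in the equation for $Y^{u_n}$. Now weak convergence of $\phi_n$ applies cleanly because the integrand $\sigma(s,\gamma(s),\cdot)$ is independent of $n$, and for the Poisson part one uses Lemma~3.11 of~\cite{Budhiraja-Chen-Dupuis}. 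Uniqueness for~\eqref{eq rate LDP 1} then forces $\gamma=Y^u$, giving the full-sequence convergence. You should replace your Gronwall sketch for (i) by this compactness-plus-identification argument.
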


\begin{proof}
We will apply Theorem \ref{Thm zhunze 2}. Proposition \ref{prop 5} allows us to define a map
\begin{eqnarray}\label{def G0}
\Gamma^0:S\ni u=(\phi,\psi) \mapsto Y^u\in D([0,T],\mathbb{R}^d),
\end{eqnarray}
here $Y^u$ is the unique solution of (\ref{eq rate LDP 1}).

By (\ref{sol-control}) and (\ref{eq X0 2-second}), for any $\epsilon>0$, $m\in(0,\infty)$ and $u_\e=(\phi_\e,\psi_\e)\in\mathcal{S}^m_1\times\mathcal{S}^m_2$, there exists a unique solution
 $\{Z^{u_\e}(t)\}_{t\in[0, T]}$ to the following SDE 
\begin{align}
\dif Z^{u_\e}(t)
=&
b_\epsilon(t,Z^{u_\e}(t),Law(X^\e(t)))\dif t+\sqrt{\e}\sigma_\epsilon(t,Z^{u_\e}(t),Law(X^\e(t)))\dif W(t)\nonumber\\
&+\sigma_\epsilon(t,Z^{u_\e}(t),Law(X^\e(t)))\phi_\e(t)\dif t\label{EQ4 LDP 2}\\
&+\e\int_ZG_\epsilon(t,Z^{u_\e}(t-),Law(X^\e(t)),z)\widetilde{N}^{\e^{-1}\psi_\e}(\dif z,\dif t),\nonumber\\
&+\int_ZG_\epsilon(t,Z^{u_\e}(t),Law(X^\e(t)),z)\left(\psi_\e(t,z)-1\right)\nu(\dif z)\dif t,\nonumber
\end{align}
with the initial data $Z^{u_\e}(0)=h$ and $X^\epsilon$ is the solution to (\ref{1-1}).

According to Theorem \ref{Thm zhunze 2}, to complete the proof of the theorem,
it is sufficient to verify the following two claims:
\begin{itemize}
  \item[(\textbf{LDP1})] For any given $m\in(0,\infty)$, let $u_n=(\phi_n,\psi_n),~n\in\mathbb{N},~u=(\phi,\psi)\in S^m_1\times S^m_2$ be such that
$u_n\rightarrow u$ in $S^m_1\times S^m_2$ as $n\rightarrow\infty$. Then
$$
\lim_{n\rightarrow\infty}\sup_{t\in[0,T]}|{\Gamma}^0(u_n)(t)-{\Gamma}^0(u)(t)|=0.
$$
  \item[(\textbf{LDP2})] For any given $m\in(0,\infty)$, let $\{u_\e=(\phi_\e,\psi_\e),~\e>0\}\subset \mathcal{S}^m_1\times \mathcal{S}^m_2$. Then
\begin{align*}
\lim\limits_{\e\rightarrow0}\mE\left(\sup\limits_{t\in[0,T]}|Z^{u_\e}(t)-{\Gamma}^0(u_\e)(t)|^2\right)=0.
\end{align*}
\end{itemize}

The verification of (\textbf{LDP1}) will be given in Proposition \ref{Yu-con}. (\textbf{LDP2}) will
be established in Proposition \ref{lem LDP 2}.
\end{proof}

\vskip 0.3cm

Recall the map ${\Gamma}^0$ in (\ref{def G0}).

\bp\label{Yu-con}
For any given $m\in(0,\infty)$, let $u_n=(\phi_n,\psi_n),~n\in\mathbb{N},~u=(\phi,\psi)\in S^m_1\times S^m_2$ be such that
$u_n\rightarrow u$ in $S^m_1\times S^m_2$ as $n\rightarrow\infty$. Then
$$
\lim_{n\rightarrow\infty}\sup_{t\in[0,T]}|{\Gamma}^0(u_n)(t)-{\Gamma}^0(u)(t)|=0.
$$

\ep
\begin{proof} Let $Y^u$ be the the solution of (\ref{eq rate LDP 1}) and $Y^{u_n}$ be the solution of (\ref{eq rate LDP 1}) with $u$ replaced by $u_n$. By the definition of ${\Gamma}^0$, $Y^{u_n}={\Gamma}^0(u_n)$ and $Y^{u}={\Gamma}^0(u)$. For simplicity, denote $Y_n=Y^{u_n}$ and $Y=Y^{u}$. Note that $Y, Y_n \in C([0,T],\mathbb{R}^d), \forall n\in\mathbb{N}$.

The proof is divided into two steps.

\vskip 0.3cm
{\bf Step 1}: We first prove that $\{Y_n\}_{n\geq1}$ is pre-compact in $C([0,T],\mR^d)$.
It suffices to show that $\{Y_n\}_{n\geq1}$ is uniformly bounded and equi-continuous in $C([0,T],\mR^d)$.
\vskip 0.2cm

It follows from (\ref{eq ll}) that $\{Y_n\}_{n\geq1}$ is uniformly bounded, i.e.
\begin{align}\label{ubm LDP}
\sup\limits_{n\geq1}\sup\limits_{t\in[0,T]}|Y_n(t)|=:C_m<\infty.
\end{align}
\vskip 0.2cm
Next, we will prove that $\{Y_n\}_{n\geq1}$ is equi-continuous  in $C([0,T],\mR^d)$.
For $t>s$,
\begin{align}\label{eq 5.1}
Y_n(t)-Y_n(s)=&\int_s^tb(r,Y_n(r),Law(X^0(r)))\dif r
+\int_s^t\sigma(r,Y_n(r),Law(X^0(r)))\phi_n(r)\dif r\nonumber\\
&+\int_s^t\!\!\int_ZG(r,Y_n(r),Law(X^0(r)),z)(\psi_n(r,z)-1)\nu(\dif z)\dif r.
\end{align}
By (A1), (\ref{ubm LDP}) and Remark \ref{rem W2} we have
\begin{align}\label{best1}
&\int_s^t|b(r,Y_n(r),Law(X^0(r)))|\dif r\cr
\leq&\int_s^t|b(r,Y_n(r),Law(X^0(r)))-b(r,0,Law(X^0(r)))|\dif r\cr
&+\int_s^t|b(r,0,Law(X^0(r)))-b(r,0,\delta_0)|\dif r
+\int_s^t|b(r,0,\delta_0)|\dif r\cr
\leq&\int_s^tL(1+|Y_n(r)|^{q-1})|Y_n(r)|\dif r+L\int_s^t|X^0(r)|\dif r+\int_s^t|b(r,0,\delta_0)|\dif r\cr
\leq&L\Big(C_m(1+C_m^{q-1})+\sup_{r\in[0,T]}|X^0(r)|\Big)|t-s|+\int_s^t|b(r,0,\delta_0)|\dif r,
\end{align}
and
\begin{align}\label{sest1}
&\int_s^t|\sigma(r,Y_n(r),Law(X^0(r)))\phi_n(r)|\dif r\cr
\leq&
    \int_s^t|\sigma(r,Y_n(r),Law(X^0(r)))\phi_n(r)-\sigma(r,0,\delta_0)\phi_n(r)|\dif r
    +
    \int_s^t|\sigma(r,0,\delta_0)\phi_n(r)|\dif r
    \cr
\leq&\int_s^tL(1+|Y_n(r)|+|X^0(r)|)|\phi_n(r)|\dif r
+\!\int_s^t\|\sigma(r,0,\delta_0)\|_{\mathcal{L}_2}|\phi_n(r)|\dif r\cr
\leq&  L(1+C_m+\sup_{r\in[0,T]}|X^0(r)|)\Big(\int_0^T|\phi_n(r)|^2\dif r\Big)^{\frac{1}{2}}|t-s|^{\frac{1}{2}}\cr
&+\left(\int_s^t|\sigma(r,0,\delta_0)|^2\dif r\right)^{\frac{1}{2}}\left(\int_0^T|\phi_n(r)|^2\dif r\right)^{\frac{1}{2}}.
\end{align}
By (B1), (\ref{ubm LDP}) and Remark \ref{rem W2}, we have
\begin{align}\label{gest1}
&\int_s^t\!\!\int_Z|G(r,Y_n(r),Law(X^0(r)),z)(\psi_n(r,z)-1)|\nu(\dif z)\dif r\cr
\leq&
   \int_s^t\!\!\int_Z|G(r,Y_n(r),Law(X^0(r)),z)(\psi_n(r,z)-1)-G(r,0,\delta_0,z)(\psi_n(r,z)-1)|\nu(\dif z)\dif r\cr
   &+
   \int_s^t\!\!\int_Z|G(r,0,\delta_0,z)(\psi_n(r,z)-1)|\nu(\dif z)\dif r
   \cr
\leq&\int_s^t\!\!\int_ZL_1(z)(|Y_n(r)|+|X^0(r)|)|\psi_n(r,z)-1|\nu(\dif z)\dif r
+\int_s^t\!\!\int_ZL_2(z)|\psi_n(r,z)-1|\nu(\dif z)\dif r\cr
\leq& (C_m+\sup_{r\in[0,T]}|X^0(r)|+1)\int_s^t\!\!\int_Z(L_1(z)+L_2(z))|\psi_n(r,z)-1|\nu(\dif z)\dif r.
\end{align}

 To show that the right side of (\ref{gest1}) is uniformly small, we need the following result, see  \cite[(3.3) of Lemma 3.1]{Zhai-Zhang} or  \cite[ Remark 2]{Yang-Zhai-Zhang}, or  \cite[(3.5) of Lemma 3.4]{Budhiraja-Chen-Dupuis}.
\begin{center}
  \emph{For every ${\theta}>0$, there exists some $\beta>0$ such that for any $ A\in\mathcal{B}([0,T])$ with ${Leb}_T({A})\leq \beta$,
\begin{eqnarray}\label{eq3 small {A}}
    \sup_{i=1,2,3} \, \sup_{\varphi\in S^m_2} \int_{A}\int_{{Z}}L_i(z)|\varphi(s,z)-1|\nu(\dif z)\,\dif s\leq {\theta}.
\end{eqnarray}}
\end{center}

On the other hand, (A1) implies that, for every ${\theta}>0$, there exists some $\beta>0$ such that if $ A\in\mathcal{B}([0,T])$ satisfies ${Leb}_T({A})\leq \beta$, then
\begin{align}\label{eq3 small}
\int_A |b(r,0,\delta_0)|\dif r+\int_A\|\sigma(r,0,\delta_0)\|^2_{\mathcal{L}^2}\dif r\leq {\theta}.
\end{align}
Combining (\ref{eq 5.1})-(\ref{eq3 small}) together, we can deduce that $\{Y_n\}_{n\geq1}$ is equi-continuous on $[0,T]$. So $\{Y_n\}_{n\geq1}$ is pre-compact in $C([0,T],\mR^d)$.

\vskip 0.3cm

\textbf{Step 2}: Let $\gamma$ be a limit of some subsequence of $\{Y_n\}_{n\geq1}$. We will show that $\gamma=Y$, completing
the proof of the proposition.  Without loss of generality, we simply assume
\begin{eqnarray}\label{eq sup LDP}
\lim_{n\rightarrow\infty}\sup_{t\in[0,T]}|\gamma(t)-Y_n(t)|=0.
\end{eqnarray}
\vskip 0.2cm

First, we note that
\begin{align}\label{Gama}
\sup\limits_{t\in[0,T]}|\gamma(t)|\leq\sup\limits_{n\geq1}\sup\limits_{t\in[0,T]}|Y_n(t)|
=C_m<\infty.
\end{align}
By (A1), (\ref{eq sup LDP}) and (\ref{Gama}),
\begin{align*}
&\int_0^T|b(s,Y_n(s),Law(X^0(s)))-b(s,\gamma(s),Law(X^0(s)))|\dif s\cr
\leq &L\int_0^T(1+|Y_n(s)|^{q-1}+|\gamma(s)|^{q-1})|Y_n(s)-\gamma(s)|\dif s\cr
\leq& LT(1+2C_m^{q-1})\sup\limits_{s\in[0,T]}|Y_n(s)-\gamma(s)|\rightarrow0, \ \ n\rightarrow\infty.
\end{align*}
Hence, for each $t\in[0,T]$,
\begin{align}\label{bcon}
&\int_0^tb(s,Y_n(s),Law(X^0(s)))\dif s
\rightarrow\int_0^tb(s,\gamma(s),Law(X^0(s)))\dif s,\ \  n\rightarrow\infty.
\end{align}
Due to (A1), $X^0\in C([0,T],\mathbb{R}^d)$, Remark \ref{rem W2} and (\ref{Gama}), it is not difficult to prove that
$$\int_0^T\|\sigma(s,\gamma(s),Law(X^0(s)))\|^2_{\mathcal{L}_2}\dif s<\infty.$$
  Since $\phi_n$ converges to $\phi$ weakly in $L^2([0, T],\mR^{d})$, for any $e\in \mathbb{R}^d$,
\begin{align*}
\int_0^t\langle\sigma(s,\gamma(s),Law(X^0(s)))\phi_n(s),e\rangle\dif s\rightarrow\int_0^t\langle\sigma(s,\gamma(s),Law(X^0(s)))\phi(s),e\rangle\dif s, \ \ n\rightarrow\infty.
\end{align*}
Hence,
\begin{align}\label{eq si 1}
\int_0^t\sigma(s,\gamma(s),Law(X^0(s)))\phi_n(s)\dif s\rightarrow\int_0^t\sigma(s,\gamma(s),Law(X^0(s)))\phi(s)\dif s, \ \ n\rightarrow\infty.
\end{align}
By (A1) and (\ref{eq sup LDP}) we have
\begin{align}\label{eq si 2}
&\int_0^T|\sigma(s,Y_n(s),Law(X^0(s)))\phi_n(s)-\sigma(s,\gamma(s),Law(X^0(s)))\phi_n(s)|\dif s\nonumber\\
\leq& L\int_0^T|Y_n(s)-\gamma(s)||\phi_n(s)|\dif s\nonumber\\
\leq& LT^{\frac{1}{2}}\sup\limits_{s\in[0,T]}|Y_n(s)-\gamma(s)|
\sup\limits_{i\geq1}\left(\int_0^T|\phi_i(s)|^2\dif s\right)^{\frac{1}{2}}\rightarrow0,\ \  \text{as} \ n\rightarrow\infty
\end{align}
where  we use $\phi_i\in S^m_1$, i.e. $\frac{1}{2}\int_0^T|\phi_i(s)|^2ds\leq m$, in the last inequality.

Therefore, (\ref{eq si 1}) and (\ref{eq si 2}) imply that
\begin{align}\label{scon}
&\int_0^t\sigma(s,Y_n(s),Law(X^0(s)))\phi_n(s)\dif s
\rightarrow\int_0^t\sigma(s,\gamma(s),Law(X^0(s)))\phi(s)\dif s, \ \  n\rightarrow\infty.
\end{align}
Applying (B1), (\ref{CK}) and (\ref{eq sup LDP}), we have
\begin{align}\label{eq uu}
&\int_0^t\!\!\int_Z|G(s,Y_n(s),Law(X^0(s)),z)
-G(s,\gamma(s),Law(X^0(s)),z)||(\psi_n(s,z)-1)|\nu(\dif z)\dif s\cr
\leq&\int_0^t\!\!\int_Z|Y_n(s)-\gamma(s)|L_1(z)|\psi_n(s,z)-1|\nu(\dif z)\dif s\cr
\leq&\sup\limits_{i\geq 1}\int_0^T\!\!\int_ZL_1(z)|\psi_i(s,z)-1|\nu(\dif z)\dif s\sup\limits_{s\in[0,T]}|Y_n(s)-\gamma(s)|\rightarrow0, \ \ n\rightarrow\infty.
\end{align}

By (B1), Remark \ref{rem W2},  (\ref{CK}) and (\ref{Gama}), we see that
\begin{eqnarray*}
\int_0^t\!\!\int_Z|G(s,\gamma(s),Law(X^0(s)),z)|^2\nu(\dif z)\dif s<\infty.
\end{eqnarray*}

Combining the above inequality with $\psi_n\rightarrow\psi$ in $S^m_2$, using Lemma 3.11 in \cite{Budhiraja-Chen-Dupuis}, we deduce that
\begin{align*}
&\lim_{n\rightarrow\infty}\int_0^t\!\!\int_Z G(s,\gamma(s),Law(X^0(s)),z)(\psi_n(s,z)-1)\nu(\dif z)\dif s\cr
&=\int_0^t\!\!\int_Z G(s,\gamma(s),Law(X^0(s)),z)(\psi(s,z)-1)\nu(\dif z)\dif s.
\end{align*}

This, together with (\ref{eq uu}), yields that
\begin{align}\label{gcon}
&\lim_{n\rightarrow\infty}\int_0^t\!\!\int_ZG(s,Y_n(s),Law(X^0(s)),z)(\psi_n(s,z)-1)\nu(\dif z)\dif s\cr
&=\int_0^t\!\!\int_ZG(s,\gamma(s),Law(X^0(s)),z)(\psi(s,z)-1)\nu(\dif z)\dif s.
\end{align}

Recall that  $Y_n$ is the solution of (\ref{eq rate LDP 1}) with $u$ replaced by $u_n$:
 \begin{align*}
   Y_n(t)
=&
  h+\int_0^tb(s,Y_n(s),Law(X^0(s)))\dif s
+
\int_0^t\sigma(s,Y_n(s),Law(X^0(s)))\phi_n(s)\dif s\nonumber\\
&+
\int_0^t\int_{Z} G(s,Y_n(s),Law(X^0(s)),z)(\psi_n(s,z)-1)\nu(\dif z)\dif s,\ \ t\in[0,T].
\end{align*}
Letting $n\rightarrow\infty$ and taking into account   (\ref{eq sup LDP}), (\ref{bcon}), (\ref{scon}) and (\ref{gcon}), we see that  $\gamma$ is a solution to (\ref{eq rate LDP 1}), and the uniqueness of the solutions of  (\ref{eq rate LDP 1}) implies that $\gamma=Y$, which completes the proof.



\end{proof}

\vskip 0.3cm
To  verify (\textbf{LDP2}), we need the following result.
\vskip 0.2cm

\begin{lemma}\label{yey0}
There exists some $\e_0>0$ and a constant $C_T$ independent of $\e$ such that
\begin{align}
\mE\left(\sup\limits_{t\in[0,T]}|X^\e(t)-X^0(t)|^2\right)\leq C_T\Big(\e+\varrho^2_{b,\e}+\e \varrho_{\sigma,\e}^2+\e \varrho_{G,\e}^2\Big),\ \ \forall \e\in(0,\e_0]
\end{align}
where $\varrho_{b,\epsilon}$ and $\varrho_{\sigma,\epsilon}$ are the constants given in (A3), and $\varrho_{G,\e}$ in (B1).
\end{lemma}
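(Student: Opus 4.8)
The plan is to run a Gronwall estimate comparing the integral form of (\ref{1-1}) with that of (\ref{eq 1.2}), using It\^o's formula for $|X^\e-X^0|^2$ and the Burkholder--Davis--Gundy (BDG) inequality to control the supremum. Write $\Delta^\e(t):=X^\e(t)-X^0(t)$, $\mu^\e_s:=\mathrm{Law}(X^\e(s))$ and $\mu^0_s=\delta_{X^0(s)}$; since $X^\e(0)=X^0(0)=h$ we have $\Delta^\e(0)=0$. First I would record the standard a priori bound $\sup_{\e\in(0,1]}\mE\sup_{t\leq T}|X^\e(t)|^2<\infty$ (obtained by the usual monotonicity argument from (A1)), which makes sense of $\mu^\e_s\in\mathcal P_2$ and legitimizes the martingale/localization manipulations below; the rest of the argument only compares $X^\e$ with the fixed path $X^0$, so the value of this bound does not enter $C_T$. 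Applying It\^o's formula to $t\mapsto|\Delta^\e(t)|^2$, localizing by $\tau_n:=\inf\{t:|\Delta^\e(t)|>n\}$ (which exceeds $T$ for large $n$ since $\Delta^\e$ has c\`adl\`ag, hence bounded, paths on $[0,T]$), one checks that the $\nu(\dif z)\dif s$-compensator arising from $\e\int_Z G_\e\,\widetilde N^{\e^{-1}}$ cancels against the compensator of the ``$2\langle\Delta^\e,\text{jump}\rangle$'' term, leaving
\begin{align*}
|\Delta^\e(t)|^2
=&\,2\int_0^t\langle\Delta^\e(s),b_\e(s,X^\e(s),\mu^\e_s)-b(s,X^0(s),\mu^0_s)\rangle\,\dif s
+\e\int_0^t\|\sigma_\e(s,X^\e(s),\mu^\e_s)\|_{\mathcal{L}_2}^2\,\dif s\\
&+\e^2\int_0^t\!\!\int_Z|G_\e(s,X^\e(s-),\mu^\e_s,z)|^2\,N^{\e^{-1}}(\dif z,\dif s)+M_1^\e(t)+M_2^\e(t),
\end{align*}
where $M_1^\e,M_2^\e$ are the (localized) stochastic integrals against $\dif W$ and $\widetilde N^{\e^{-1}}$, and the third term is a nondecreasing process with expectation $\e\,\mE\int_0^t\!\int_Z|G_\e|^2\,\nu(\dif z)\dif s$. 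Keeping this last term in ``raw'' form (rather than centering it and applying BDG) is what lets us work with only the $L^2(\nu)$-bounds in (B1).

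Next I would estimate the three non-martingale terms. For the drift, decompose $b_\e(s,X^\e(s),\mu^\e_s)-b(s,X^0(s),\mu^0_s)$ as $[b_\e-b](s,X^\e(s),\mu^\e_s)+[b(s,X^\e(s),\mu^\e_s)-b(s,X^0(s),\mu^\e_s)]+[b(s,X^0(s),\mu^\e_s)-b(s,X^0(s),\mu^0_s)]$ and bound, respectively: the first by $\varrho_{b,\e}$ via (A3), so $2\langle\Delta^\e(s),\cdot\rangle\leq|\Delta^\e(s)|^2+\varrho_{b,\e}^2$; the second by the one-sided monotonicity inequality in (A1), $\langle\Delta^\e(s),\cdot\rangle\leq L|\Delta^\e(s)|^2$ (this is the only use of the drift condition, which is why the superlinear growth of $b$ plays no role here); the third by the measure-Lipschitz bound in (A1) together with Remark~\ref{rem W2}, $\mW_2(\mu^\e_s,\mu^0_s)\leq(\mE|\Delta^\e(s)|^2)^{1/2}$. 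For $\sigma_\e$, write $\|\sigma_\e(s,X^\e(s),\mu^\e_s)\|_{\mathcal{L}_2}^2\lesssim\varrho_{\sigma,\e}^2+\|\sigma(s,X^\e(s),\mu^\e_s)-\sigma(s,X^0(s),\mu^0_s)\|_{\mathcal{L}_2}^2+\|\sigma(s,X^0(s),\delta_{X^0(s)})\|_{\mathcal{L}_2}^2$ and use the joint Lipschitz bound in (A1), Remark~\ref{rem W2}, and $\int_0^T\|\sigma(s,X^0(s),\delta_{X^0(s)})\|_{\mathcal{L}_2}^2\dif s<\infty$ (a consequence of (A1) and $X^0\in C([0,T],\mR^d)$ from Proposition~\ref{th1}); multiplied by the prefactor $\e$, the $\varrho_{\sigma,\e}^2$ piece produces the $\e\varrho_{\sigma,\e}^2$ term and the $X^0$-piece produces part of the $\e$ term. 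The $G_\e$ term is handled the same way using (B1): $|G_\e(s,X^\e(s),\mu^\e_s,z)|\lesssim\varrho_{G,\e}L_3(z)+L_1(z)(|\Delta^\e(s)|+\mW_2(\mu^\e_s,\mu^0_s))+|G(s,X^0(s),\delta_{X^0(s)},z)|$, squared and integrated against $\nu$, using $L_1,L_3\in L^2(\nu)$ and $\int_0^T\!\int_Z|G(s,X^0(s),\delta_{X^0(s)},z)|^2\nu(\dif z)\dif s<\infty$ (from (A2)); the prefactor $\e$ turns these into the $\e\varrho_{G,\e}^2$ term and the rest of the $\e$ term.

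To obtain the claimed bound, I would take $\sup_{r\leq t}$ in the It\^o expansion and then expectations on $[0,t\wedge\tau_n]$: the three non-martingale terms are estimated as above, and since $\mW_2^2(\mu^\e_s,\mu^0_s)\leq\mE|\Delta^\e(s)|^2\leq\mE\sup_{l\leq s}|\Delta^\e(l)|^2$, all their ``dangerous'' pieces take the form $C\int_0^t\mE\sup_{l\leq s}|\Delta^\e(l)|^2\dif s$, the remainder being $\leq C(\e+\varrho_{b,\e}^2+\e\varrho_{\sigma,\e}^2+\e\varrho_{G,\e}^2)$. For the martingales, BDG gives $\mE\sup_{r\leq t}|M_1^\e(r)|\leq C\sqrt\e\,\mE\big[\sup_{r\leq t}|\Delta^\e(r)|\,\big(\int_0^t\|\sigma_\e\|_{\mathcal{L}_2}^2\dif s\big)^{1/2}\big]$ and $\mE\sup_{r\leq t}|M_2^\e(r)|\leq C\e\,\mE\big[\sup_{r\leq t}|\Delta^\e(r)|\,\big(\int_0^t\!\int_Z|G_\e|^2\,N^{\e^{-1}}(\dif z,\dif s)\big)^{1/2}\big]$; Young's inequality then moves a term $\tfrac12\mE\sup_{r\leq t}|\Delta^\e(r)|^2$ to the left-hand side while leaving, modulo the already-controlled integrals, a contribution of order $\e$. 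Gronwall's lemma applied to $t\mapsto\mE\sup_{r\leq t\wedge\tau_n}|\Delta^\e(r)|^2$ followed by $n\to\infty$ (Fatou) yields $\mE\sup_{t\leq T}|\Delta^\e(t)|^2\leq C_T(\e+\varrho_{b,\e}^2+\e\varrho_{\sigma,\e}^2+\e\varrho_{G,\e}^2)$, with $C_T$ depending only on $L$, $T$, $\|L_1\|_{L^2(\nu)}$, $\|L_3\|_{L^2(\nu)}$, $\sup_{t\leq T}|X^0(t)|$, $\int_0^T(|b(s,0,\delta_0)|+\|\sigma(s,0,\delta_0)\|_{\mathcal{L}_2}^2)\dif s+\int_0^T\!\int_Z|G(s,0,\delta_0,z)|^2\nu(\dif z)\dif s$ and the BDG constant, and $\e_0$ any value (e.g.\ $\e_0=1$) for which the above absorptions are valid. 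The part requiring the most care is the jump contribution: one must get the It\^o formula for $|X^\e-X^0|^2$ exactly right — in particular recognize that $\e\,\widetilde N^{\e^{-1}}$ contributes a quadratic-variation term of order $\e$ (not $\e^2$ or $1$), which is precisely the mechanism producing the $\e$-term in the conclusion — and, when controlling the supremum, avoid invoking $L^4(\nu)$-type bounds on $G$ that are not available, which is why it is convenient to keep $\e^2\int\!\int|G_\e|^2\,N^{\e^{-1}}$ as a single nondecreasing process rather than splitting it into a martingale plus its compensator. The remaining steps are routine Gronwall/BDG bookkeeping.
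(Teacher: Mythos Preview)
Your proposal is correct and follows essentially the same approach as the paper: apply It\^o's formula to $|X^\e-X^0|^2$, decompose each of the drift, diffusion, and jump pieces via (A3)/(A1)/(B1) together with Remark~\ref{rem W2}, control the two martingale terms by BDG plus Young's inequality (absorbing a fraction of $\mE\sup_t|\Delta^\e(t)|^2$ to the left), and finish with Gronwall. Your explicit localization by $\tau_n$ and the remark that the jump quadratic term should be kept as $\e^2\int\!\int|G_\e|^2\,N^{\e^{-1}}$ (to avoid needing $L^4(\nu)$ bounds) are exactly what the paper does implicitly; the only cosmetic difference is that the paper collects the $\e$-dependent coefficients multiplying $\mE\sup_t|\Delta^\e(t)|^2$ and chooses $\e_0$ so that $\tfrac35-C\e\geq\tfrac15$, whereas you fold these into the absorption step.
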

\begin{proof}
By  It\^o's formula,
\begin{align*}
&|X^\e(t)-X^0(t)|^2\cr
=&2\int_0^t\<b_\epsilon(s,X^\e(s),Law(X^\e(s)))-b(s,X^0(s),Law(X^0(s))),X^\e(s)-X^0(s)\>\dif s\cr
&+2\sqrt{\e}\int_0^t\<\sigma_\epsilon(s,X^\e(s),Law(X^\e(s))),X^\e(s)-X^0(s)\>\dif W(s)\cr
&+2\e\int_0^t\!\!\int_Z\<G_\epsilon(s,X^\e(s-),Law(X^\e(s)),z),X^\e(s-)-X^0(s-)\>\widetilde{N}^{\e^{-1}}(\dif z,\dif s)\cr
&+\e\int_0^t\|\sigma_\epsilon(s,X^\e(s),Law(X^\e(s)))\|^2_{{\mathcal{L}_2}}\dif s\cr
&+\e^2\int_0^t\!\!\int_Z|G_\epsilon(s,X^\e(s-),Law(X^\e(s)),z)|^2N^{\e^{-1}}(\dif z,\dif s)\cr
=:&
 I_1(t)+I_2(t)+I_3(t)+I_4(t)+I_5(t).
\end{align*}

By (A1), (A3) and Remark \ref{rem W2}, we have
\begin{align}\label{eq I1}
     &I_1(t)\nonumber\\
\leq&
     2\int_0^t|\<b_\epsilon(s,X^\e(s),Law(X^\e(s)))-b(s,X^\e(s),Law(X^\e(s))),X^\e(s)-X^0(s)\>|\dif s\nonumber\\
     &+
     2\int_0^t\<b(s,X^\e(s),Law(X^\e(s)))-b(s,X^0(s),Law(X^\e(s))),X^\e(s)-X^0(s)\>\dif s\nonumber\\
    &+
    2\int_0^t|\<b(s,X^0(s),Law(X^\e(s)))-b(s,X^0(s),Law(X^0(s))),X^\e(s)-X^0(s)\>|\dif s\nonumber\\
\leq&
     2\varrho_{b,\epsilon}\int_0^T|X^\e(s)-X^0(s)|\dif s
     +
     2L\int_0^T|X^\e(s)-X^0(s)|^2\dif s\nonumber\\
    &+
    2L\int_0^T\mW_2(Law(X^\e(s)),Law(X^0(s)))|X^\e(s)-X^0(s)|\dif s\nonumber\\
\leq&
     (3L+1)\int_0^T|X^\e(s)-X^0(s)|^2\dif s
    +
    L\int_0^T\mW_2^2(Law(X^\e(s)),Law(X^0(s)))\dif s
    +
    \varrho^2_{b,\epsilon} T
    \nonumber\\
\leq&
     (3L+1)\int_0^T|X^\e(s)-X^0(s)|^2\dif s
    +
    L\int_0^T\mathbb{E}(|X^\e(s)-X^0(s)|^2)\dif s
    +
    \varrho^2_{b,\epsilon} T.
\end{align}
Hence,
\begin{eqnarray}\label{eq I1 1}
     \mathbb{E}(\sup_{t\in[0,T]}I_1(t))
\leq
     (4L+1)\mathbb{E}\int_0^T|X^\e(s)-X^0(s)|^2\dif s+\varrho^2_{b,\epsilon} T.
\end{eqnarray}
Also by (A1), (A3) and Remark \ref{rem W2},
\begin{align}\label{eq I4}
&  \mathbb{E}\left(\sup_{t\in[0,T]}|I_4(t)|\right)\nonumber\\
=&
     \e \mathbb{E}\int_0^T\|\sigma_\epsilon(s,X^\e(s),Law(X^\e(s)))\|^2_{\mathcal{L}_2}\dif s\cr
\leq&
     C\e \mathbb{E}\int_0^T\|\sigma_\epsilon(s,X^\e(s),Law(X^\e(s)))-\sigma(s,X^\e(s),Law(X^\e(s)))\|^2_{\mathcal{L}_2}\dif s\cr
      &+
     C\e \mathbb{E}\int_0^T\|\sigma(s,X^\e(s),Law(X^\e(s)))-\sigma(s,X^0(s),Law(X^0(s)))\|^2_{\mathcal{L}_2}\dif s\cr
     &+
     C\e \int_0^T\|\sigma(s,X^0(s),Law(X^0(s)))\|^2_{\mathcal{L}_2}\dif s\cr
\leq&
     C\e \varrho_{\sigma,\e}^2T
     +
     CL^2\e \mathbb{E}\int_0^T\left(|X^\e(s)-X^0(s)|^2+\mW_2^2(Law(X^\e(s)),Law(X^0(s)))\right)\dif s\cr
     &+
     C\e \int_0^T\|\sigma(s,X^0(s),Law(X^0(s)))\|^2_{\mathcal{L}_2}\dif s\\
\leq&
     C\e \varrho_{\sigma,\e}^2T
     +
     CL^2T\e \mathbb{E}\left(\sup\limits_{s\in[0,T]}|X^\e(s)-X^0(s)|^2\right)\nonumber\\
     &+
     C\e \int_0^T\|\sigma(s,X^0(s),Law(X^0(s)))\|^2_{\mathcal{L}_2}\dif s.\nonumber
\end{align}
By Burkholder-Davis-Gundy's inequality, Young's inequality and (\ref{eq I4}), we have
\begin{align}\label{eq I2}
&\mathbb{E}\left(\sup_{t\in[0,T]}|I_2(t)|\right)\nonumber\\
\leq&
    C\sqrt{\e}\mathbb{E}\left(\int_0^T\|\sigma_\e(s,X^\e(s),Law(X^\e(s)))\|^2_{\mathcal{L}_2}|X^\e(s)-X^0(s)|^2\dif s\right)^{\frac{1}{2}}\nonumber\\
\leq&
     \frac{1}{5}\mathbb{E}\left(\sup\limits_{s\in[0,T]}|X^\e(s)-X^0(s)|^2\right)
     +
     C\e \mathbb{E}\int_0^T\|\sigma_\e(s,X^\e(s),Law(X^\e(s)))\|^2_{\mathcal{L}_2}\dif s\\
\leq&
     (\frac{1}{5}+CL^2T\e)\mathbb{E}
     \left(\sup\limits_{s\in[0,T]}|X^\e(s)-X^0(s)|^2\right)+
     C\e \varrho_{\sigma,\e}^2T\nonumber\\
     &+
     C\e \int_0^T\|\sigma(s,X^0(s),Law(X^0(s)))\|^2_{\mathcal{L}_2}\dif s.\nonumber
\end{align}
It follows from Remark \ref{rem W2} and (B1) that
\begin{align}\label{eq I5}
&\mathbb{E}\left(\sup_{t\in[0,T]}|I_5(t)|\right)\nonumber\\
=&
    \e \mathbb{E}\left(\int_0^T\!\!\int_Z|G_\e(s,X^\e(s),Law(X^\e(s)),z)|^2\nu(dz)\dif s\right)\nonumber\\
\leq&
C\e \mathbb{E}\left(\int_0^T\!\!\int_Z|G_\e(s,X^\e(s),Law(X^\e(s)),z)-G(s,X^\e(s),Law(X^\e(s)),z)|^2
   \nu(\dif z)\dif s\right)\nonumber\\
   &+
   C\e \mathbb{E}\left(\int_0^T\!\!\int_Z|G(s,X^\e(s),Law(X^\e(s)),z)-G(s,X^0(s),Law(X^0(s)),z)|^2
   \nu(\dif z)\dif s\right)\nonumber\\
    &+
   C\e \int_0^T\!\!\int_Z|G(s,X^0(s),Law(X^0(s)),z)|^2\nu(\dif z)\dif s\nonumber\\
\leq&
   C\e \varrho_{G,\e}^2\int_ZL_3^2(z)\nu(dz) T
   +
   C\e \int_0^T\!\!\int_Z|G(s,X^0(s),Law(X^0(s)),z)|^2\nu(\dif z)\dif s\nonumber\\
   &+
   C\e \mathbb{E}\left(\int_0^T\!\!\int_ZL_1^2(z)\Big(|X^\e(s)-X^0(s)|^2
   +\mW_2^2(Law(X^\e(s)),Law(X^0(s)))\Big)\nu(\dif z)\dif s\right)\nonumber\\
\leq&
  C\e \varrho_{G,\e}^2\int_ZL_3^2(z)\nu(dz) T
   +C\e \int_0^T\!\!\int_Z|G(s,X^0(s),Law(X^0(s)),z)|^2\nu(\dif z)\dif s\nonumber\\
    &+C\e T\int_ZL_1^2(z)\nu(dz) \mathbb{E}\Big(\sup_{s\in[0,T]}\!\!|X^\e(s)-X^0(s)|^2\Big).
\end{align}

By Burkholder-Davis-Gundy's inequality, Young's inequality and (\ref{eq I5}), one can obtain
\begin{align}\label{eq I3}
&  \mathbb{E}\left(\sup_{t\in[0,T]}|I_3(t)|\right)\nonumber\\
\leq&
    C\e \mathbb{E}\left(\int_0^T\!\!\int_Z|G_\e(s,X^\e(s-),Law(X^\e(s)),z)|^2|X^\e(s-)-X^0(s-)|^2
    N^{\e^{-1}}(\dif z,\dif s)\right)^{\frac{1}{2}}\nonumber\\
\leq&
     \frac{1}{5}\mathbb{E}\left(\sup\limits_{s\in[0,T]}|X^\e(s)-X^0(s)|^2\right)
     +
    C\e \mathbb{E}\left(\int_0^T\!\!\int_Z|G_\e(s,X^\e(s),Law(X^\e(s)),z)|^2\nu(\dif z)\dif s)\right)\nonumber\\
\leq&
   (\frac{1}{5}+C\e T\int_ZL_1^2(z)\nu(\dif z)) \mathbb{E}\Big(\sup_{s\in[0,T]}\!\!|X^\e(s)-X^0(s)|^2\Big)\\
    &+
   C\e \int_0^T\!\!\int_Z|G(s,X^0(s),Law(X^0(s)),z)|^2\nu(\dif z)\dif s
   +
   C\e \varrho_{G,\e}^2\int_ZL_3^2(z)\nu(\dif z) T.\nonumber
\end{align}

Based on the above estimates, it holds
\begin{align}\label{eq cc}
&\Big(\frac{3}{5}-CL^2T\e-C\e T\int_ZL_1^2(z)\nu(\dif z)\Big)\mE\left(\sup\limits_{t\in[0,T]}|X^\e(t)-X^0(t)|^2\right)\nonumber\\
\leq&
(4L+1)\mathbb{E}\int_0^T|X^\e(s)-X^0(s)|^2\dif s
+
C\e \int_0^T\|\sigma(s,X^0(s),Law(X^0(s)))\|^2_{\mathcal{L}_2}\dif s\nonumber\\
&+
C\e \int_0^T\!\!\int_Z|G(s,X^0(s),Law(X^0(s)),z)|^2\nu(\dif z)\dif s\nonumber\\
&+
\varrho_{b,\e}^2T
+
C\e\varrho_{\sigma,\e}^2T
+
C\e \varrho_{G,\e}^2\int_ZL_3^2(z)\nu(dz) T.
\end{align}
By (A1), (B1) and using the fact that $\int_Z(L_1^2(z)+L_2^2(z))\nu(\dif z)<\infty$, we can prove that
\begin{align}\label{eq Ct}
\int_0^T\!\!\!\big(\|\sigma(s,\!X^0(s),\!Law(X^0(s)))\|^2_{\mathcal{L}_2}\!
+\!\!\int_Z\!|G(s,X^0(s),Law(X^0(s)),z)|^2\nu(\dif z)\big)\dif s
<\infty,
\end{align}
and there exists $\e_0>0$ small enough such that, for any $\e\in(0,\e_0]$,
\begin{eqnarray}\label{eq Ct LDP sat}
\frac{3}{5}-CL^2T\e-C\e T\int_ZL_1^2(z)\nu(\dif z)\geq \frac{1}{5}.
\end{eqnarray}
Hence, by (\ref{eq cc}), (\ref{eq Ct}), (\ref{eq Ct LDP sat}) and Gronwall's inequality, there exists some constant $C_T>0$ such that, for any $\e\in(0,\e_0]$,
\begin{align*}
\mE\left(\sup\limits_{t\in[0,T]}|X^\e(t)-X^0(t)|^2\right)\leq C_T(\e+\varrho_{b,\e}^2+\e \varrho_{\sigma,\e}^2+\e \varrho_{G,\e}^2),
\end{align*}
which is the desired result.

\end{proof}

\vskip 0.3cm
Next we will verify (\textbf{LDP2}).
\vskip 0.3cm

\begin{proposition}\label{lem LDP 2}
For any given $m\in(0,\infty)$, let $\{u_\e=(\phi_\e,\psi_\e),~\e>0\}\subset \mathcal{S}^m_1\times \mathcal{S}^m_2$. Then, for the solution $Z^{u_\e}$ to (\ref{EQ4 LDP 2}),
\begin{align*}
\lim\limits_{\e\rightarrow0}\mE\left(\sup\limits_{t\in[0,T]}|Z^{u_\e}(t)-{\Gamma}^0(u_\e)(t)|^2\right)=0.
\end{align*}
\end{proposition}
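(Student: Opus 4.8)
The plan is to compare the two solutions directly. Write $D^\e:=Z^{u_\e}-\Gamma^0(u_\e)=Z^{u_\e}-Y^{u_\e}$, where $Z^{u_\e}$ solves (\ref{EQ4 LDP 2}) and $Y^{u_\e}$ solves (\ref{eq rate LDP 1}); note $D^\e(0)=0$ and $D^\e$ is c\`adl\`ag. Applying It\^o's formula to $|D^\e(t)|^2$ produces: a Lebesgue integral pairing $D^\e(s)$ with the difference of the drifts, with the difference of the controlled drifts $\sigma_\e\phi_\e$ resp.\ $\sigma\phi_\e$, and with the difference of the $\nu$-integrands $\int_Z[\,\cdot\,](\psi_\e(s,z)-1)\nu(\dif z)$; the It\^o correction $\e\|\sigma_\e(s,Z^{u_\e}(s),Law(X^\e(s)))\|^2_{\mathcal L_2}$; a Brownian martingale with a $\sqrt\e$ prefactor; a compensated Poisson martingale with an $\e$ prefactor; and the jump quadratic variation $\e^2\int_0^t\!\int_Z|G_\e(s,Z^{u_\e}(s-),Law(X^\e(s)),z)|^2 N^{\e^{-1}\psi_\e}(\dif z,\dif s)$. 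In each of the three drift-type differences I would insert and subtract intermediate coefficients so as to split it into (i) an ``$\e$-approximation'' part, comparing $b_\e,\sigma_\e,G_\e$ with $b,\sigma,G$, bounded by $\varrho_{b,\e}$, $\varrho_{\sigma,\e}$, $\varrho_{G,\e}L_3(z)$ via (A3), (B1); (ii) a ``state'' part, comparing the argument $Z^{u_\e}(s)$ with $Y^{u_\e}(s)$, which after pairing with $D^\e(s)$ is bounded, by the one-sided monotonicity of $b$ in (A1) and the Lipschitz bounds on $\sigma$, $G$ in (A1), (B1), by terms of the form $g_\e(s)|D^\e(s)|^2$; and (iii) a ``law'' part, comparing $Law(X^\e(s))$ with $Law(X^0(s))=\delta_{X^0(s)}$, bounded, by Remark \ref{rem W2}, by a constant times $\big(\mE|X^\e(s)-X^0(s)|^2\big)^{1/2}$, which by Lemma \ref{yey0} tends to $0$ uniformly in $s$. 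The $\e$-prefactored terms (the It\^o correction and the jump quadratic variation, whose expectation is $\e\,\mE\int_0^t\!\int_Z|G_\e|^2\psi_\e\,\nu(\dif z)\dif s$) are controlled using the linear growth $|G_\e(s,x,\mu,z)|\le C\big(L_1(z)(1+|x|+\mW_2(\mu,\delta_0))+L_2(z)+\varrho_{G,\e}L_3(z)\big)$, $L_1,L_2,L_3\in L^2(\nu)$, and the uniform bound (\ref{eq ll}) on $\sup_t|Y^{u_\e}(t)|$ together with $\sup_{\e,s}\mE|X^\e(s)|^2<\infty$ (from Lemma \ref{yey0} and Proposition \ref{th1}).

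After Young's inequality this reorganizes into
\[
|D^\e(t)|^2\le \mathrm{err}_\e+\int_0^t g_\e(s)\,|D^\e(s)|^2\,\dif s+M^\e(t),\qquad t\in[0,T],
\]
where $M^\e$ is the sum of the two genuine martingales, $\mathrm{err}_\e$ is a nonnegative random variable with $\mE\,\mathrm{err}_\e\to0$ — a finite combination of $T\varrho_{b,\e}^2$, $\e\varrho_{\sigma,\e}^2T$, $\varrho_{G,\e}^2$ times finite $\nu$-integrals, $\sup_s\mE|X^\e(s)-X^0(s)|^2$ (from Lemma \ref{yey0}), and $\e$ times quantities finite by (A1), (B1), (\ref{eq ll}) — and
\[
g_\e(s)=C\Big(1+|\phi_\e(s)|^2+\int_Z\big(L_1(z)+L_1^2(z)+L_2(z)+L_2^2(z)+L_3^2(z)\big)\big(|\psi_\e(s,z)-1|+1\big)\,\nu(\dif z)\Big).
\]
The key point is that $g_\e$ is uniformly integrable in time: $\sup_\e\int_0^T g_\e(s,\om)\,\dif s\le C_m<\infty$ for $P$-a.e.\ $\om$. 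This uses $\phi_\e\in S_1^m$, $L_i\in L^2(\nu)$ for the ``$+1$'' contributions, and — for the $|\psi_\e-1|$ contributions — the estimate (\ref{CK}) (Lemma 3.4 of \cite{Budhiraja-Chen-Dupuis}) applied both to $L_i$ and to $L_i^2$; the latter is licit precisely because $L_i\in\mathcal H$, i.e.\ $e^{cL_i^2}$ is locally $\nu$-integrable, which is exactly the hypothesis of that lemma.

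To conclude I would run a Gr\"onwall argument that tolerates the random coefficient $g_\e$. Since $g_\e\ge0$ is $\mF$-predictable with $\rho_\e(t):=\int_0^t g_\e(s)\,\dif s\le C_m$ a.s., applying the integration-by-parts formula to $e^{-\rho_\e(t)}|D^\e(t)|^2$ cancels the $g_\e|D^\e|^2$ term and, using $e^{-\rho_\e}\le1$, yields
\[
e^{-\rho_\e(t)}|D^\e(t)|^2\le \mathrm{err}_\e+\int_0^t e^{-\rho_\e(s)}\,\dif M^\e(s).
\]
Taking $\sup_{t\le T}$, then expectations, and estimating the two stochastic integrals by the Burkholder--Davis--Gundy and Young inequalities, the $\sqrt\e$ in front of the Brownian part and the net $\sqrt\e$ (the $\e$ prefactor against the $\e^{-1}\psi_\e$ intensity) in front of the Poisson part make their contributions $O(\e)$ after a fixed fraction of $\mE\sup_{t\le T}\!\big(e^{-\rho_\e(t)}|D^\e(t)|^2\big)$ has been absorbed into the left side; rigor here is ensured by first localizing with $\tau_R=\inf\{t:|D^\e(t)|\ge R\}$ (legitimate since $\mE\sup_t|Z^{u_\e}(t)|^2<\infty$ for fixed $\e$ and (\ref{eq ll}) holds), absorbing, and letting $R\to\infty$. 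One then obtains $\mE\sup_{t\le T}\big(e^{-\rho_\e(t)}|D^\e(t)|^2\big)\le C\,\mE\,\mathrm{err}_\e+C\e\to0$, and since $e^{-\rho_\e}\ge e^{-C_m}$ this gives $\mE\sup_{t\le T}|D^\e(t)|^2\to0$, which is the assertion.

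The main obstacle is the jump part: the driving intensity $\e^{-1}\psi_\e$ is large while the weight $\psi_\e-1$ in (\ref{EQ4 LDP 2}) is unbounded, so one must simultaneously (a) control $\int_Z|G_\e(s,\cdot,z)|^2\psi_\e(s,z)\,\nu(\dif z)$ uniformly in $\e$, which is exactly where the square-exponential integrability built into the class $\mathcal H$ (hence the $L_i^2$-version of (\ref{CK})) is indispensable, and (b) cope with the fact that the resulting Gr\"onwall coefficient $g_\e$ is genuinely random, with only an a.s.\ deterministic bound on its time-integral — which is why the auxiliary process $e^{-\rho_\e(t)}|D^\e(t)|^2$, rather than a plain Gr\"onwall, is needed. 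Everything else is routine bookkeeping of monotonicity/Lipschitz estimates of the same type already carried out in Proposition \ref{prop 5} and Lemma \ref{yey0}.
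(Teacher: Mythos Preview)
Your approach is correct and essentially the same as the paper's: It\^o on $|Z^{u_\e}-Y^{u_\e}|^2$, the same three-way split of each coefficient difference ((A3)/(B1) approximation error, (A1)/(B1) state-Lipschitz giving the Gr\"onwall term, Lemma~\ref{yey0} for the law part), BDG for the two martingales, and the $\psi_\e$-integral bounds from \cite[Lemma~3.4]{Budhiraja-Chen-Dupuis}. The only difference is organizational: the paper first applies a \emph{pathwise} Gr\"onwall absorbing just the $(\psi_\e-1)$-drift contribution $J_4$ (arriving at (\ref{eq 5.27})), then takes expectation and runs a second Gr\"onwall in $\mE\sup_{r\le s}|D^\e(r)|^2$; you collapse these two passes into one via the weighted process $e^{-\rho_\e(t)}|D^\e(t)|^2$. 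Both work.

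One small correction: your claim that $\sup_\e\int_0^T\!\int_Z L_i^2(z)\,|\psi_\e-1|\,\nu(\dif z)\dif s<\infty$ follows from ``(\ref{CK}) applied to $L_i^2$'' is not quite right. Estimate (\ref{CK}) requires the integrand to lie in $\mathcal H$, which for $L_i^2$ would mean $e^{cL_i^4}$ is locally $\nu$-integrable---this is not assumed. The bound you need is still true, but via $|\psi_\e-1|\le\psi_\e+1$ together with (3.3) of \cite[Lemma~3.4]{Budhiraja-Chen-Dupuis} (this is exactly the quantity $\Theta_m$ the paper invokes after (\ref{eq E I46})), which only requires $L_i\in L^2(\nu)$.
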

\begin{proof}
 Let $Y^{u_\e}$ be the solution of (\ref{eq rate LDP 1}) with $u$ replaced by $u_\e$. Then  ${\Gamma}^0(u_\e)=Y^{u_\e}$.

By (\ref{EQ4 LDP 2}) and (\ref{eq rate LDP 1}), we have
\begin{align*}
&Z^{u_\e}(t)-Y^{u_\e}(t)\cr
=&\int_0^t\left(b_\e(s,Z^{u_\e}(s),Law(X^\e(s)))-b(s,Y^{u_\e}(s),Law(X^0(s)))\right)\dif s\cr
&+\sqrt{\e}\int_0^t\sigma_\e(s,Z^{u_\e}(s),Law(X^\e(s)))\dif W(s)\cr
&+\int_0^t\left(\sigma_\e(s,Z^{u_\e}(s),Law(X^\e(s)))-\sigma(s,Y^{u_\e}(s),Law(X^0(s)))\right)
\phi_\e(s)\dif s\cr
&+\int_0^t\!\!\int_Z\left(G_\e(s,Z^{u_\e}(s),Law(X^\e(s)),z)-G(s,Y^{u_\e}(s),Law(X^0(s)),z)\right)
(\psi_\e(s,z)-1)\nu(\dif z)\dif s\cr
&+\e\int_0^t\!\!\int_ZG_\e(s,Z^{u_\e}(s-),Law(X^\e(s)),z)\widetilde{N}^{\e^{-1}\psi_\e}(\dif z,\dif s).
\end{align*}
By It\^o's formula,
\begin{align}\label{eq 5.26}
&|Z^{u_\e}(t)-Y^{u_\e}(t)|^2\nonumber\\
=&
2\int_0^t\<Z^{u_\e}(s)-Y^{u_\e}(s), b_\e(s,Z^{u_\e}(s),Law(X^\e(s)))-b(s,Y^{u_\e}(s),Law(X^0(s)))\>\dif s\nonumber\\
&
+2\sqrt{\e}\int_0^t\<Z^{u_\e}(s)-Y^{u_\e}(s), \sigma_\e(s,Z^{u_\e}(s),Law(X^\e(s)))\dif W(s)\>\nonumber\\
&
+2\int_0^t\<Z^{u_\e}(s)-Y^{u_\e}(s),
\left(\sigma_\e(s,Z^{u_\e}(s),Law(X^\e(s)))-\sigma(s,Y^{u_\e}(s),Law(X^0(s)))\right)\phi_\e(s)\>
\dif s\nonumber\\
&
+2\int_0^t\!\!\int_Z\<Z^{u_\e}(s)-Y^{u_\e}(s), \nonumber\\
&\ \ \ \ \ \ \ \ \left(G_\e(s,Z^{u_\e}(s),Law(X^\e(s)),z)-G(s,Y^{u_\e}(s),Law(X^0(s)),z)\right)
(\psi_\e(s,z)-1)\>\nu(\dif z)\dif s\nonumber\\
&
+2\e\int_0^t\!\!\int_Z\<Z^{u_\e}(s-)-Y^{u_\e}(s-),
G_\e(s,Z^{u_\e}(s-),Law(X^\e(s)),z)\>\widetilde{N}^{\e^{-1}\psi_\e}(\dif z,\dif s)\nonumber\\
&
+\e\int_0^t\|\sigma_\e(s,Z^{u_\e}(s),Law(X^\e(s)))\|_{\mathcal{L}_2}^2\dif s\nonumber\\
&+\e^2\int_0^t\!\!\int_Z|G_\e(s,Z^{u_\e}(s-),Law(X^\e(s)),z)|^2N^{\e^{-1}\psi_\e}(\dif z,\dif s)\nonumber\\
=:&J_1(t)+J_2(t)+J_3(t)+J_4(t)+J_5(t)+J_6(t)+J_7(t).
\end{align}

By (B1), Remark \ref{rem W2} and (\ref{CK}),
\begin{align*}
&J_4(t)\\
\leq&
2\int_0^t\!\!\int_Z |Z^{u_\e}(s)-Y^{u_\e}(s)|
   |G_\e(s,Z^{u_\e}(s),Law(X^\e(s)),z)-G(s,Z^{u_\e}(s),Law(X^\e(s)),z)|\nonumber\\
               &\ \ \ \ \ \ \ \ \ \ \cdot|\psi_\e(s,z)-1|\nu(\dif z)\dif s\nonumber\\
&+
C\int_0^t\!\!\int_Z\left(|Z^{u_\e}(s)-Y^{u_\e}(s)|^2
+|Z^{u_\e}(s)-Y^{u_\e}(s)|\mW_2(Law(X^\e(s)),Law(X^0(s)))\right)\nonumber\\
 &\ \ \ \ \ \ \ \ \ \ \cdot L_1(z)|\psi_\e(s,z)-1|\nu(\dif z)\dif s\cr
\leq&
C\int_0^t\!\!\int_Z|Z^{u_\e}(s)-Y^{u_\e}(s)|^2(L_1(z)+L_3(z))|\psi_\e(s,z)-1|\nu(\dif z)\dif s\\
&+
C\mE\Big(\sup_{s\in[0,T]}|X^\e(s)-X^0(s)|^2\Big)\sup_{\varphi\in S^m_2}\int_0^T\!\!\int_ZL_1(z)|\varphi(s,z)-1|\nu(\dif z)\dif s\\
&+
\varrho_{G,\e}^2\sup_{\varphi\in S^m_2}\int_0^T\!\!\int_ZL_3(z)|\varphi(s,z)-1|\nu(\dif z)\dif s\\
\leq&
C\int_0^t\!\!\int_Z|Z^{u_\e}(s)-Y^{u_\e}(s)|^2(L_1(z)+L_3(z))|\psi_\e(s,z)-1|\nu(\dif z)\dif s\\
&+
C\Big(\mE\Big(\sup_{s\in[0,T]}|X^\e(s)-X^0(s)|^2\Big)
+
\varrho_{G,\e}^2\Big).
\end{align*}

Set
$$
J:=\sup_{t\in[0,T]}\Big(J_1(t)+J_2(t)+J_3(t)+J_5(t)+J_6(t)+J_7(t)\Big).
$$
Plugging the above inequality into (\ref{eq 5.26}), by Gronwall's inequality and (\ref{CK}), we arrive at
\begin{align}\label{eq 5.27}
&\sup_{t\in[0,T]}|Z^{u_\e}(t)-Y^{u_\e}(t)|^2\nonumber\\
\leq&
C\exp\{C\sup\limits_{\varphi\in S^m_2}\int_0^T\!\!\int_Z(L_1(z)+L_3(z))|\varphi(s,z)-1|)\nu(\dif z)\dif s\}\nonumber\\
&\times\Big(
    \mE\big(\sup_{s\in[0,T]}|X^\e(s)-X^0(s)|^2\big)
   +
   \varrho_{G,\e}^2
   +
   J
   \Big)\nonumber\\
\leq&
C\Big(\mE\Big(\sup_{s\in[0,T]}|X^\e(s)-X^0(s)|^2\Big)+\varrho_{G,\e}^2+J\Big).
\end{align}


Using (A1), Lemma \ref{yey0}, (A3), (B1) and Burkholder-Davis-Gundy's inequality, it follows from (\ref{eq 5.27}) that
\begin{align}\label{eq zz}
&\mE\left(\sup\limits_{t\in[0,T]}|Z^{u_\e}(t)-Y^{u_\e}(t)|^2\right)\nonumber\\
\leq& C(\e+\varrho_{b,\e}^2+\e\varrho_{\sigma,\e}^2+\e\varrho_{G,\e}^2+\varrho_{G,\e}^2)
+
C\varrho_{b,\e}\mE\int_0^T|Z^{u_\e}(s)-Y^{u_\e}(s)|\dif s\nonumber\\
&+
C\mE\int_0^T\left(|Z^{u_\e}(s)-Y^{u_\e}(s)|^2+ |Z^{u_\e}(s)-Y^{u_\e}(s)|\mW_2(Law(X^\e(s)),Law(X^0(s)))\right)\dif s\nonumber\\
&
+C\sqrt{\e}\mE\left(\int_0^T|Z^{u_\e}(s)-Y^{u_\e}(s)|^2
\|\sigma_\e(s,Z^{u_\e}(s),Law(X^\e(s)))\|_{\mathcal{L}_2}^2\dif s\right)^{\frac{1}{2}}\nonumber\\
&
+
C\varrho_{\sigma,\e}\mE\int_0^T|Z^{u_\e}(s)-Y^{u_\e}(s)||\phi_\e(s)|\dif s\nonumber\\
&
+C\mE\int_0^T|Z^{u_\e}(s)-Y^{u_\e}(s)|
\|\sigma(s,Z^{u_\e}(s),Law(X^\e(s)))-\sigma(s,Y^{u_\e}(s),Law(X^0(s)))\|_{\mathcal{L}_2}
|\phi_\e(s)|\dif s\nonumber\\
&
+C\e\mE\left(\int_0^T\!\!\int_Z|Z^{u_\e}(s-)-Y^{u_\e}(s-)|^2|
G_\e(s,Z^{u_\e}(s-),Law(X^\e(s)),z)|^2N^{\e^{-1}\psi_\e}(\dif z,\dif s)\right)^{\frac{1}{2}}\nonumber\\
&
+
C\e\mE\int_0^T\|\sigma_\e(s,Z^{u_\e}(s),Law(X^\e(s)))\|_{\mathcal{L}_2}^2\dif s\cr
&+
C\e\mE\int_0^T\!\!\int_Z|G_\e(s,Z^{u_\e}(s),Law(X^\e(s)),z)|^2|\psi_\e(s,z)|\nu(\dif z)\dif s\nonumber\\
:=&C(\e+\varrho_{b,\e}^2+\e\varrho_{\sigma,\e}^2+\e\varrho_{G,\e}^2+\varrho_{G,\e}^2)\nonumber\\
&+I_1+I_2+I_3+I_4+I_5+I_6+I_7+I_8, \ \ \ \forall \e\in(0,\e_0],
\end{align}
where $\e_0$ is the constant appearing in Lemma \ref{yey0}.

\vskip 0.2cm
In the sequel, let $\epsilon\in(0,\e_0]$.
\vskip 0.2cm

Keeping in mind that $\phi_\e\in\mathcal{S}^m_1$, we have
\begin{align}\label{eq LDP I1 I4}
I_1+I_4
\leq&
C\int_0^T\mE\Big(\sup\limits_{r\in[0,s]}|Z^{u_\e}(r)-Y^{u_\e}(r)|^2\Big)\dif s
+
C\varrho_{b,\e}^2T+C\varrho_{\sigma,\e}^2\mE\int_0^T|\phi_\e(s)|^2\dif s\nonumber\\
\leq&
C\int_0^T\mE\Big(\sup\limits_{r\in[0,s]}|Z^{u_\e}(r)-Y^{u_\e}(r)|^2\Big)\dif s
+
C\varrho_{b,\e}^2T+C\varrho_{\sigma,\e}^2m.
\end{align}
By Young's inequality and Lemma \ref{yey0},
\begin{align}\label{eq E I1}
I_2\leq&C\int_0^T\mE\Big(\sup\limits_{r\in[0,s]}|Z^{u_\e}(r)-Y^{u_\e}(r)|^2\Big)\dif s
+C\int_0^T\mW_2^2(Law(X^\e(s)),Law(X^0(s)))\dif s\nonumber\\
\leq&C\int_0^T\mE\Big(\sup\limits_{r\in[0,s]}|Z^{u_\e}(r)-Y^{u_\e}(r)|^2\Big)\dif s
+C\mE\left(\sup\limits_{s\in[0,T]}|X^\e(s)-X^0(s)|^2\right)\nonumber\\
\leq&C\int_0^T\mE\Big(\sup\limits_{r\in[0,s]}|Z^{u_\e}(r)-Y^{u_\e}(r)|^2\Big)\dif s+C(\e+\varrho_{b,\e}^2+\e\varrho_{\sigma,\e}^2+\e\varrho_{G,\e}^2).
\end{align}
Using (A3) and (\ref{eq ll}), we have
\begin{align}\label{eq E I25}
&I_3+I_7\cr
\leq&
\frac{1}{10}\mE\Big(\sup\limits_{t\in[0,T]}|Z^{u_\e}(t)-Y^{u_\e}(t)|^2\Big)+
C\e\mE\int_0^T\|\sigma_\e(s,Z^{u_\e}(s),Law(X^\e(s)))\|_{\mathcal{L}_2}^2\dif s\cr
\leq&
\frac{1}{10}\mE\Big(\sup\limits_{t\in[0,T]}|Z^{u_\e}(t)-Y^{u_\e}(t)|^2\Big)\cr
&+
C\e\mE\int_0^T\|\sigma_\e(s,Z^{u_\e}(s),Law(X^\e(s)))-\sigma(s,Z^{u_\e}(s),Law(X^\e(s)))\|_{\mathcal{L}_2}^2\dif s\cr
&+
C\e\int_0^T\mE\|\sigma(s,Z^{u_\e}(s),Law(X^\e(s)))
-\sigma(s,Y^{u_\e}(s),Law(X^0(s)))\|_{\mathcal{L}_2}^2\dif s\cr
&+
C\e\int_0^T\mE\|\sigma(s,Y^{u_\e}(s),Law(X^0(s)))-\sigma(s,0,\delta_0)\|_{\mathcal{L}_2}^2\dif s
+C\e\int_0^T\|\sigma(s,0,\delta_0)\|_{\mathcal{L}_2}^2\dif s\cr
\leq&
\frac{1}{10}\mE\Big(\sup\limits_{t\in[0,T]}|Z^{u_\e}(t)-Y^{u_\e}(t)|^2\Big)
+
C\e\varrho_{\sigma,\e}^2T
+
C\e\int_0^T\mE\Big(|Z^{u_\e}(s)-Y^{u_\e}(s)|^2\Big)\dif s\cr
&+
C\e\Big\{\mE\Big(\sup\limits_{s\in[0,T]}|X^\e(s)-X^0(s)|^2\Big)
+\sup\limits_{u\in S^m_1\times S^m_2}\Big(\sup\limits_{t\in[0,T]}|Y^u(t)|^2\Big)\cr
&\ \ \ \ \ \ \ \ \ \ +\sup\limits_{t\in[0,T]}|X^0(t)|^2+\int_0^T\|\sigma(s,0,\delta_0\|_{\mathcal{L}_2}^2\dif s\Big\}\nonumber\\
\leq&
(\frac{1}{10}+C\e)\mE\Big(\sup\limits_{t\in[0,T]}|Z^{u_\e}(t)-Y^{u_\e}(t)|^2\Big)\!
+\!
C(\e+\e^2+\varrho_{b,\e}^2+\e\varrho_{\sigma,\e}^2+\e\varrho_{G,\e}^2).
\end{align}

Again by (A1), Young's inequality and Lemma \ref{yey0}, using the fact that  $\phi_\e\in\mathcal{S}^m_1$, we have
\begin{align}\label{eq E I3}
I_5
\leq&C\mE\Bigg\{\sup\limits_{t\in[0,T]}|Z^{u_\e}(t)-Y^{u_\e}(t)|
\Big(\int_0^T|\phi_\e(s)|^2\dif s\Big)^{\frac{1}{2}}\Big)
\nonumber\\
&\Big(\int_0^T
\|\sigma(s,Z^{u_\e}(s),Law(X^\e(s)))-\sigma(s,Y^{u_\e}(s),Law(X^0(s)))\|_{\mathcal{L}_2}^2\dif s\Big)^{\frac{1}{2}}\Bigg\}\nonumber\\
\leq&
  \frac{1}{10}\mE\Big(\sup\limits_{t\in[0,T]}|Z^{u_\e}(t)-Y^{u_\e}(t)|^2\Big)
+C\int_0^T\mE|Z^{u_\e}(s)-Y^{u_\e}(s)|^2\dif s\nonumber\\
&+C\int_0^T\mW_2^2(Law(X^\e(s)),Law(X^0(s)))\dif s\nonumber\\
\leq&
  \frac{1}{10}\mE\Big(\sup\limits_{t\in[0,T]}|Z^{u_\e}(t)-Y^{u_\e}(t)|^2\Big)
+
C\int_0^T\mE\Big(\sup\limits_{r\in[0,s]}|Z^{u_\e}(r)-Y^{u_\e}(r)|^2\Big)\dif s\nonumber\\
&+
C(\e+\varrho_{b,\e}^2+\e\varrho_{\sigma,\e}^2+\e\varrho_{G,\e}^2).
\end{align}

By H\"older's inequality, (B1), (\ref{eq ll}) and Lemma \ref{yey0},
{\small
\begin{align}\label{eq E I46}
&I_6+I_8\cr
\leq&
\frac{1}{10}\mE\Big(\sup\limits_{t\in[0,T]}|Z^{u_\e}(t)-Y^{u_\e}(t)|^2\Big)
+
C\e\mE\int_0^T\!\!\int_Z|G_\e(s,Z^{u_\e}(s),Law(X^\e(s)),z)|^2|\psi_\e(s,z)|\nu(\dif z)\dif s\cr
\leq&
\frac{1}{10}\mE\Big(\sup\limits_{t\in[0,T]}|Z^{u_\e}(t)-Y^{u_\e}(t)|^2\Big)\cr
&+
C\e\mE\int_0^T\!\!\int_Z|G_\e(s,Z^{u_\e}(s),Law(X^\e(s)),z)-G(s,Z^{u_\e}(s),Law(X^\e(s)),z)|^2|\psi_\e(s,z)|\nu(\dif z)\dif s\cr
&+
C\e\mE\int_0^T\!\!\int_Z|G(s,Z^{u_\e}(s),Law(X^\e(s)),z)-G(s,Y^{u_\e}(s),Law(X^0(s)),z)|^2|\psi_\e(s,z)|\nu(\dif z)\dif s\cr
&+
C\e\mE\int_0^T\!\!\int_Z|G(s,Y^{u_\e}(s),Law(X^0(s)),z)-G(s,0,\delta_0,z)|^2|\psi_\e(s,z)|\nu(\dif z)\dif s\cr
&+
  C\e\mE\int_0^T\!\!\int_Z|G(s,0,\delta_0,z)|^2|\psi_\e(s,z)|\nu(\dif z)\dif s\cr
\leq&
\frac{1}{10}\mE\Big(\sup\limits_{t\in[0,T]}|Z^{u_\e}(t)-Y^{u_\e}(t)|^2\Big)
+
C\e \varrho_{G,\e}^2\Theta_m
\cr
&+
C\e \Theta_m
\Bigg\{
    \mE\Big(\sup\limits_{t\in[0,T]}|Z^{u_\e}(t)-Y^{u_\e}(t)|^2\Big)
   +
   \mE\Big(\sup\limits_{t\in[0,T]}|X^\e(t)-X^0(t)|^2\Big)\cr
&+
  \sup\limits_{u\in S^m_1\times S^m_2}\Big(\sup\limits_{t\in[0,T]}|Y^u(t)|^2\Big)
+\sup\limits_{t\in[0,T]}|X^0(t)|^2
+
1
\Bigg\}\cr
\leq&(\frac{1}{10}+C\e \Theta_m)\mE\Big(\sup\limits_{t\in[0,T]}|Z^{u_\e}(t)-Y^{u_\e}(t)|^2\Big)
+
C\e \Theta_m(1+\e+\varrho_{b,\e}^2+\e\varrho_{\sigma,\e}^2+\e\varrho_{G,\e}^2+\varrho_{G,\e}^2).
\end{align}
}
Here
$$
\Theta_m:=\sup_{\varphi\in S^m_2}\int_0^T\!\!\int_{{Z}}(L^2_1(z)+L^2_2(z)+L^2_3(z))(\varphi(s,z)+1)\nu(\dif z)\,\dif s<\infty.
$$
See (3.3)  in \cite[Lemma 3.4]{Budhiraja-Chen-Dupuis}.
Combining (\ref{eq zz})--(\ref{eq E I46}) together, we arrive at
\begin{align*}
&(\frac{7}{10}-C\e-C\e\Theta_m)\mE\Big(\sup\limits_{t\in[0,T]}|Z^{u_\e}(t)-Y^{u_\e}(t)|^2\Big)\\
\leq&
C\e+C\int_0^T\mE\Big(\sup\limits_{r\in[0,s]}|Z^{u_\e}(r)-Y^{u_\e}(r)|^2\Big)\dif s+C(\varrho_{b,\e}^2+\varrho_{\sigma,\e}^2+\varrho_{G,\e}^2).
\end{align*}
Hence, by Gronwall's inequality we have
\begin{eqnarray*}
\lim_{\e\rightarrow0}\mE\Big(\sup\limits_{t\in[0,T]}|Z^{u_\e}(t)-Y^{u_\e}(t)|^2\Big)=0,
\end{eqnarray*}
which completes the proof.
\end{proof}

\subsection{Moderate deviation principle}

For any $t\in[0,T]$ and $\mu\in \mathcal{P}_2$,
 let $b'_2(t,x,\mu)$ denote the  derivative of $b(t,x,\mu)$ with respect to the variable $x$. In order to obtain the MDP for the solution $\{X^\epsilon,\epsilon>0\}$ to (\ref{1-1}), we give the following assumption.
\vskip 0.2cm

(B2) There are $L', q'\geq0$ such that for each $x, x'\in\mR^d$,
\begin{align}\label{gbg}
|b'_2(s,x,Law(X^0(s)))-b'_2(s,x',Law(X^0(s)))|\leq L'(1+|x|^{q'}+|x'|^{q'})|x-x'|,
\end{align}
and
$\int_0^T|b'_2(t,X^0(t),Law(X^0(t)))|\dif t<\infty$.

Before stating the main result, we first present the following result.
\bp\label{lem 3.8}
Assume that (A1), (B1) and (B2) hold. Then for any fixed $m\in(0,\infty)$ and $u=(\phi,\varphi)\in {S}^m_1\times B_2(m)$, there is a unique solution $K^u=\{K^u(t),t\in[0,T]\}\in C([0,T],\mathbb{R}^d)$ to the following equation,
\begin{equation}\label{Y(U)}
\begin{cases}
&\dif K^u(t)=b'_2(t,X^0(t),Law(X^0(t)))K^u(t)\dif t
+\sigma(t,X^0(t),Law(X^0(t)))\phi(t)\dif t\\
&\ \ \ \ \ \ \ \ \ \ \ \ \ \ \ \ \ \ +\int_ZG(t,X^0(t),Law(X^0(t)),z)\varphi(t,z)\nu(\dif z)\dif t\\
&K^u(0)=0.
\end{cases}
\end{equation}
Moreover,
\begin{eqnarray}\label{eq E M 1}
\sup_{u\in {S}^m_1\times B_2(m)}\sup_{t\in[0,T]}|K^u(t)|=:\Xi_m<\infty.
\end{eqnarray}

\ep
\begin{proof}

By using (\ref{eq Ct}) and the fact that $u\in {S}^m_1\times B_2(m)$, we have
\begin{align}\label{eq 11}
&\int_0^T|\sigma(t,X^0(t),Law(X^0(t)))\phi(t)|\dif t\cr
\leq&\left(\int_0^T|\sigma(t,X^0(t),Law(X^0(t)))|^2\dif t\right)^{\frac{1}{2}}\left(\int_0^T|\phi(t)|^2\dif t\right)^{\frac{1}{2}}\cr
\leq&\left(\int_0^T|\sigma(t,X^0(t),Law(X^0(t)))|^2\dif t\right)^{\frac{1}{2}}\left(2m\right)^{\frac{1}{2}}\cr
<&\infty,
\end{align}
and
\begin{align}\label{eq 12}
&\int_0^T\!\!\int_Z|G(t,X^0(t),Law(X^0(t)),z)\varphi(t,z)|\nu(\dif z)\dif t\cr
\leq&\left(\int_0^T\!\!\int_Z|G(t,X^0(t),Law(X^0(t)),z)|^2\nu(\dif z)\dif t\right)^{\frac{1}{2}}\left(\int_0^T\!\!\int_Z|\varphi(t,z)|^2\nu(\dif z)\dif t\right)^{\frac{1}{2}}\cr
\leq&\left(\int_0^T\!\!\int_Z|G(t,X^0(t),Law(X^0(t)),z)|^2\nu(\dif z)\dif t\right)^{\frac{1}{2}}m\cr
<&\infty.
\end{align}
With these two estimates above it is standard to show that  the linear equation (\ref{Y(U)}) has a unique solution $\{K^u(t)\}_{t\in[0, T]}$. The estimate (\ref{eq E M 1})follows by using Gronwall's inequaity.

\end{proof}

Recall $a(\e)$ in (\ref{eq a ep}).
For any $\e\in(0,1)$, define
\begin{align*}
M^\e(t)=\frac{1}{a(\e)}(X^\e(t)-X^0(t)),\ \ t\in[0,T].
\end{align*}
Due to  (\ref{1-1}) and (\ref{eq 1.2}), $M^\e$ satisfies
\begin{align}
M^\e(t)
=&\frac{1}{a(\e)}\int_0^t\left(b_\e(s,a(\e)M^\e(s)+X^0(s),Law(X^\e(s)))
-b(s,X^0(s),Law(X^0(s)))\right)\dif s\nonumber\\
&+\frac{\sqrt{\e}}{a(\e)}\int_0^t\sigma_\e(s,a(\e)M^\e(s)+X^0(s),Law(X^\e(s)))\dif W(s)\nonumber\\
&+\frac{\e}{a(\e)}\int_0^t\!\!\int_ZG_\e(s,a(\e)M^\e(s-)+X^0(s-),Law(X^\e(s)),z)
\widetilde{N}^{\e^{-1}}(\dif z,\dif s).
\end{align}

We introduce the following assumption.

\vskip 0.5cm

(B3) $\lim_{\epsilon\rightarrow 0}\frac{\varrho_{b,\e}}{a(\e)}=0$, where $\varrho_{b,\e}$ is given in (A3).

\vskip 0.3cm
Now we state the main result in this subsection.
\begin{theorem}\label{Th ex 2}
Assume that (A0), (A1), (A3), (A4), (B1), (B2) and (B3) hold. Then $\{M^\epsilon,~\e>0\}$  satisfies a LDP on $D([0,T],\mathbb{R}^d)$ with speed $\epsilon/a^2(\e)$ and the good
rate function $I$ given by for any $g\in D([0,T],\mathbb{R}^d)$
\begin{align*}
&I(g):=&\inf_{
    \{u=(\phi,\varphi)\in L^2([0,T],\mathbb{R}^d)\times L_2(\nu_T),K^u=g\}
           }
    \Big\{
      \frac{1}{2}\int_0^T\!\!|\phi(s)|^2\dif s
      +\frac{1}{2}\int_0^T\!\!\!\int_Z|\varphi(s,z)|^2\nu(\dif z)\dif s
    \Big\},
\end{align*}
where for $u=(\phi,\varphi)\in L^2([0,T],\mathbb{R}^d)\times L_2(\nu_T)$, $K^u$ is the unique solution of (\ref{Y(U)}).
Here we use the convention that the infimum of an empty set is $\infty$.
\end{theorem}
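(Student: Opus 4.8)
The plan is to derive Theorem \ref{Th ex 2} from the sufficient condition provided by Theorem \ref{TH MDP 2}. The limiting controlled map is the one dictated by Proposition \ref{lem 3.8}: set $\Upsilon^0(\phi,\varphi):=K^{(\phi,\varphi)}$, where $K^{(\phi,\varphi)}$ is the unique solution of the linear skeleton equation (\ref{Y(U)}). The fixed-point argument behind Proposition \ref{lem 3.8} solves (\ref{Y(U)}) for an arbitrary $(\phi,\varphi)\in L^2([0,T],\mathbb{R}^d)\times L^2(\nu_T)$, so $\Upsilon^0$ is well defined and measurable on the whole space, and its values lie in $C([0,T],\mathbb{R}^d)\subset\mD$. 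With this choice the abstract rate function (\ref{eq rate MDP 1}) coincides with the one in the statement, so it only remains to verify (MDP 1') and (MDP 2').

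Condition (MDP 1') would be established exactly as Proposition \ref{Yu-con}. Given $m$ and $(\phi_n,\varphi_n)\to(\phi,\varphi)$ in $S^m_1\times B_2(m)$, i.e. weak convergence in $L^2$ of each component, the bound (\ref{eq E M 1}) makes $\{K^{(\phi_n,\varphi_n)}\}$ uniformly bounded, and the integral form of (\ref{Y(U)}) together with $\int_0^T|b'_2(s,X^0(s),Law(X^0(s)))|\,\dif s<\infty$, the estimate (\ref{eq Ct}) and the Cauchy--Schwarz bounds (\ref{eq 11})--(\ref{eq 12}) yields equicontinuity; hence $\{K^{(\phi_n,\varphi_n)}\}$ is precompact in $C([0,T],\mathbb{R}^d)$. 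For a subsequential uniform limit $\gamma$, the drift term in (\ref{Y(U)}) converges because $b'_2(\cdot,X^0,Law(X^0))$ is integrable in time and $K^{(\phi_n,\varphi_n)}\to\gamma$ uniformly, while the $\sigma(\cdot,X^0,Law(X^0))\phi_n$ and $\int_Z G(\cdot,X^0,Law(X^0),z)\varphi_n(\cdot,z)\,\nu(\dif z)$ terms converge by the weak $L^2$ convergence of $\phi_n$ and $\varphi_n$ tested against the fixed square-integrable kernels. Uniqueness in (\ref{Y(U)}) forces $\gamma=\Upsilon^0(\phi,\varphi)$, so the whole sequence converges in $\mD$.

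For (MDP 2'), fix $m$, a family $\{(\phi_\epsilon,\psi_\epsilon)\}_{\epsilon>0}\subset\mathcal{S}^m_1\times\mathcal{S}^m_{+,\epsilon}$ and $\beta\in(0,1]$ such that $\tilde\varphi_\epsilon:=\varphi_\epsilon 1_{\{|\varphi_\epsilon|\leq\beta/a(\epsilon)\}}\in B_2(\sqrt{m\kappa_2(1)})$, where $\varphi_\epsilon=(\psi_\epsilon-1)/a(\epsilon)$. By Theorem \ref{Thm 02} (specialised to $\mathbb{R}^d$), $M^{u_\epsilon}$ is the unique solution of (\ref{eq MDP 1-second}), while $K^{\tilde u_\epsilon}:=\Upsilon^0(\phi_\epsilon,\tilde\varphi_\epsilon)$ solves (\ref{Y(U)}). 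I would apply It\^o's formula to $|M^{u_\epsilon}(t)-K^{\tilde u_\epsilon}(t)|^2$ and estimate the resulting terms. The drift difference is handled through the first-order Taylor expansion
\begin{equation*}
\frac{1}{a(\epsilon)}\big(b(s,a(\epsilon)M^{u_\epsilon}(s)+X^0(s),Law(X^0(s)))-b(s,X^0(s),Law(X^0(s)))\big)=b'_2(s,X^0(s),Law(X^0(s)))\,M^{u_\epsilon}(s)+R^\epsilon(s),
\end{equation*}
where (B2) gives $|R^\epsilon(s)|\leq C\,a(\epsilon)\,(1+|M^{u_\epsilon}(s)|^{q'})|M^{u_\epsilon}(s)|^2$; the further discrepancies caused by replacing $b$ by $b_\epsilon$ and $Law(X^0)$ by $Law(X^\epsilon)$ are controlled by $\varrho_{b,\epsilon}/a(\epsilon)\to0$ (condition (B3)) and by $\mW_2(Law(X^\epsilon),Law(X^0))/a(\epsilon)$, which tends to $0$ by Remark \ref{rem W2}, Lemma \ref{yey0}, (B3) and (\ref{eq a ep}). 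The $\sigma_\epsilon(\cdot,a(\epsilon)M^{u_\epsilon}+X^0,Law(X^\epsilon))\phi_\epsilon$ term is compared with $\sigma(\cdot,X^0,Law(X^0))\phi_\epsilon$ via (A1), (A3), Remark \ref{rem W2} and Lemma \ref{yey0}; the It\^o integral $\frac{\sqrt\epsilon}{a(\epsilon)}\int\sigma_\epsilon\,\dif W$ and the martingale $\frac{\epsilon}{a(\epsilon)}\int G_\epsilon\,\dif\widetilde{N}^{\epsilon^{-1}\psi_\epsilon}$ both vanish in $L^2$ after the Burkholder--Davis--Gundy inequality and the relation $\epsilon/a^2(\epsilon)\to0$; and the compensator term $\int_Z G_\epsilon(s,a(\epsilon)M^{u_\epsilon}+X^0,Law(X^\epsilon),z)\varphi_\epsilon(s,z)\,\nu(\dif z)$ is compared with $\int_Z G(s,X^0,Law(X^0),z)\tilde\varphi_\epsilon(s,z)\,\nu(\dif z)$ in three steps: replacing the state and measure arguments by $X^0$ and $Law(X^0)$ (bounded via (B1), Remark \ref{rem W2} and Lemma \ref{yey0}), replacing $G_\epsilon$ by $G$ (using $\varrho_{G,\epsilon}\to0$ from (B1)), and discarding the tail, for which $\frac{1}{a(\epsilon)}\int_0^T\!\!\int_Z|G(s,X^0(s),Law(X^0(s)),z)|\,|\psi_\epsilon(s,z)-1|\,1_{\{|\varphi_\epsilon(s,z)|>\beta/a(\epsilon)\}}\,\nu(\dif z)\,\dif s\to0$ by Lemma 3.2 of \cite{BDG} together with $Q_2(\psi_\epsilon)\leq m\,a^2(\epsilon)$ and $L_1,L_2\in\mathcal{H}\cap L^2(\nu)$ from (B1). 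Collecting all terms yields $\mathbb{E}\sup_{r\leq t}|M^{u_\epsilon}(r)-K^{\tilde u_\epsilon}(r)|^2\leq o(1)+C\int_0^t\mathbb{E}\sup_{r\leq s}|M^{u_\epsilon}(r)-K^{\tilde u_\epsilon}(r)|^2\,\dif s$, whence Gronwall's inequality and the bound $d(x,y)\leq(T+1)\sup_{t\in[0,T]}|x(t)-y(t)|$ give (MDP 2').

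I expect the main obstacle to lie in (MDP 2'), and within it two points. First, the Taylor remainder $R^\epsilon$ is negligible only once one has a uniform-in-$\epsilon$ moment estimate $\sup_{\epsilon\in(0,1)}\mathbb{E}\sup_{t\in[0,T]}|M^{u_\epsilon}(t)|^{p}<\infty$ for $p$ large enough to absorb the superlinear exponents $q$ (from (A1)) and $q'$ (from (B2)); deriving this directly from the scaled equation (\ref{eq MDP 1-second}), uniformly over $u_\epsilon\in\mathcal{S}^m_1\times\mathcal{S}^m_{+,\epsilon}$, is the delicate a priori step and is where the monotonicity and growth structure of the drift must be used carefully. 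Second, the jump compensator forces the cutoff at level $\beta/a(\epsilon)$, and the separation of the ``large-$\varphi_\epsilon$'' mass must be carried out with the $\ell$-functional estimates of \cite{BDG,Budhiraja-Chen-Dupuis}; this is precisely where the L\'evy case genuinely differs from the Gaussian one. The diffusion and small-martingale contributions, by contrast, are routine owing to the speed condition $\epsilon/a^2(\epsilon)\to0$.
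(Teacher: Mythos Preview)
Your overall architecture is correct and matches the paper: define $\Upsilon^0$ via Proposition~\ref{lem 3.8}, then verify (MDP~1') and (MDP~2') so that Theorem~\ref{TH MDP 2} applies. Your treatment of (MDP~1') is essentially the paper's Proposition~\ref{YUm-YU}.

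The gap is in (MDP~2'), precisely at the point you flag as delicate. You propose to absorb the Taylor remainder
\[
|R^\epsilon(s)|\le C\,a(\epsilon)\bigl(1+|M^{u_\epsilon}(s)|^{q'}\bigr)\,|M^{u_\epsilon}(s)|^2
\]
by establishing a uniform $p$-th moment bound on $M^{u_\epsilon}$ for $p$ large enough to dominate the exponent $q'+2$. The paper does \emph{not} do this, and under the stated hypotheses such higher moments are not derived; only the second-moment bound of Lemma~\ref{lem 6.6} is available. Instead the paper localizes: it introduces the stopping time $\tau_\epsilon^j=\inf\{t:|M^{u_\epsilon}(t)|\ge j\}\wedge T$, so that on $[0,\tau_\epsilon^j]$ one has $|M^{u_\epsilon}|\le j$ and the remainder satisfies $|R^\epsilon|\le C_j\,a(\epsilon)$ with a $j$-dependent but $\epsilon$-independent constant. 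One then proves $\mathbb{E}\sup_{t}|M^{u_\epsilon}(t\wedge\tau_\epsilon^j)-K^{\tilde u_\epsilon}(t\wedge\tau_\epsilon^j)|^2\to 0$ as $\epsilon\to 0$ for each fixed $j$, and combines this with $P(\tau_\epsilon^j<T)\le C/j^2$ (from Lemma~\ref{lem 6.6}) to obtain convergence in probability. This is why the paper only claims convergence in probability in (MDP~2'), not $L^2$ convergence as in your final Gronwall line.

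So the missing ingredient is the stopping-time localization, which replaces your proposed higher-moment estimate entirely. Once you localize, your Taylor expansion, your treatment of the $\sigma$-terms via (A1), (A3), Lemma~\ref{yey0}, your BDG estimates on the stochastic integrals using $\epsilon/a^2(\epsilon)\to 0$, and your three-step comparison of the jump compensator (with the tail handled by the estimates collected in Lemma~\ref{lem 234}, i.e.\ Lemmas~4.2, 4.3, 4.7 of \cite{Budhiraja-Dupuis-Ganguly}) all go through, but with $j$-dependent constants and the integrals cut at $\tau_\epsilon^j$.
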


\begin{proof}

By Proposition \ref{lem 3.8}, we can define a map
\begin{eqnarray}\label{def R}
 {\Upsilon}^0:L^2([0,T],\mathbb{R}^d)\times L_2(\nu_T)\ni u=(\phi,\varphi) \mapsto K^u\in D([0,T],\mathbb{R}^d),
\end{eqnarray}
where $K^u$ is the unique solution of (\ref{Y(U)}).

For any $\epsilon>0$, $m\in(0,\infty)$ and $u_\e=(\phi_\e,\psi_\e)\in\mathcal{S}^m_1\times\mathcal{S}^m_{+,\e}$, recall that $\{M^{u_\e}(t)\}_{t\in[0, T]}$ (see (\ref{eq MDP 1-second}))
 is the solution to the following SDE
\begin{equation}\label{F-EQU}
\begin{cases}
\dif M^{u_\e}(s)=&\!\!\!\frac{1}{a(\e)}\left(b_\e(s,a(\e)M^{u_\e}(s)+X^0(s),Law(X^\e(s)))
-b(s,X^0(s),Law(X^0(s)))\right)\dif s\cr
&+
\frac{\sqrt{\e}}{a(\e)}\sigma_\e(s,a(\e)M^{u_\e}(s)+X^0(s),Law(X^\e(s)))\dif W(s)\cr
&+\sigma_\e(s,a(\e)M^{u_\e}(s)+X^0(s),Law(X^\e(s)))\phi_\e(s)\dif s\\
&+
\frac{\e}{a(\e)}\int_ZG_\e(s,a(\e)M^{u_\e}(s-)+X^0(s-),Law(X^\e(s)),z)
\widetilde{N}^{\e^{-1}\psi_\e}(\dif z,\dif s)\cr
&+
\frac{1}{a(\e)}\int_ZG_\e(s,a(\e)M^{u_\e}(s)+X^0(s),Law(X^\e(s)),z)
(\psi_\e(s,z)-1)\nu(\dif z)\dif s,\\
M^{u_\e}(0)=0.
\end{cases}
\end{equation}

According to Theorem \ref{TH MDP 2},
it is sufficient to verify the following two claims:
\begin{itemize}
  \item[(\textbf{MDP1})] For any given $m\in(0,\infty)$, let $u_n=(\phi_n,\varphi_n),~n\in\mathbb{N},~u=(\phi,\psi)\in S^m_1\times B_2(m)$ be such that
$u_n\rightarrow u$ in $S^m_1\times B_2(m)$ as $n\rightarrow\infty$. Then
$$
\lim_{n\rightarrow\infty}\sup_{t\in[0,T]}| {\Upsilon}^0(u_n)(t)- {\Upsilon}^0(u)(t)|=0.
$$
  \item[(\textbf{MDP2})] For any given $m\in(0,\infty)$, let $\{u_\e=(\phi_\e,\psi_\e),~\e>0\}\subset \mathcal{S}^m_1\times \mathcal{S}^m_{+,\e}$, and
  for some $\beta\in(0,1]$, $\varphi_\epsilon1_{\{|\varphi_\epsilon|\leq \beta/a(\epsilon)\}}\in B_2(\sqrt{m\kappa_2(1)})$
  where $\varphi_\epsilon=(\psi_\epsilon-1)/a(\epsilon)$. Set
\begin{eqnarray}\label{eq uuu}
\widetilde{u}_\e:=(\phi_\epsilon,\varphi_\epsilon1_{\{|\varphi_\epsilon|\leq \beta/a(\epsilon)\}}).
\end{eqnarray}
 Then for any  $\varpi>0$,
 $$\lim_{\e\rightarrow0}P\Big(\sup\limits_{t\in[0,T]}|M^{u_\e}(t)- {\Upsilon}^0(\widetilde{u}_\e)(t)|>\varpi)=0.$$
\end{itemize}

The verification of (\textbf{MDP1}) and (\textbf{MDP2}) will be given in Propositions \ref{YUm-YU} and \ref{Lemma 6.7} respectively.
\end{proof}

\vskip 0.3cm
Next proposition is the  verification of (\textbf{MDP1}).
\vskip 0.2cm

\bp\label{YUm-YU}
For any given $m\in(0,\infty)$, let $u_n=(\phi_n,\varphi_n),~n\in\mathbb{N},~u=(\phi,\psi)\in S^m_1\times B_2(m)$ be such that
$u_n\rightarrow u$ in $S^m_1\times B_2(m)$ as $n\rightarrow\infty$. Then
$$
\lim_{n\rightarrow\infty}\sup_{t\in[0,T]}| {\Upsilon}^0(u_n)(t)- {\Upsilon}^0(u)(t)|=0.
$$
\ep
\begin{proof}
Recall that $K^u={\Upsilon}^0(u)$,$K^{u_n}={\Upsilon}^0(u_n)$ are the corresponding solutions to (\ref{Y(U)}). We need
to prove the following result:
$$
\lim_{n\rightarrow\infty}\sup_{t\in[0,T]}|K^{u_n}(t)-K^u(t)|=0.
$$

The proof is similar to that of Proposition \ref{Yu-con} and we just give a sketch here.
We first show that  $\{K^{u_n}\}_{n\geq1}$ is pre-compact in $C([0,T],\mR^d)$.
 (\ref{eq E M 1}) implies that $\{K^{u_n}\}_{n\geq1}$ is uniformly bounded, i.e.
\begin{align}\label{ubm MDP}
C_m:=\sup\limits_{n\geq1}\sup\limits_{t\in[0,T]}|K^{u_n}(t)|<\infty.
\end{align}
\vskip 0.3cm
For any $s, t\in[0,T]$ with $s<t$, by (\ref{eq 11}), (\ref{eq 12}) and (\ref{ubm MDP}),
\begin{align*}
&|K^{u_n}(t)-K^{u_n}(s)|\cr
\leq&C_m\int_s^t|b'_2(r,X^0(r),Law(X^0(r)))|\dif r
+\left(2m\right)^{\frac{1}{2}}\left(\int_s^t|\sigma(r,X^0(r),Law(X^0(r)))|^2\dif r\right)^{\frac{1}{2}}\cr
&+m\left(\int_s^t\!\!\int_Z|G(r,X^0(r),Law(X^0(r)),z)|^2\nu(\dif z)\dif r\right)^{\frac{1}{2}}.
\end{align*}
This, together with (\ref{eq Ct}) and (B2), implies that $\{K^{u_n}\}_{n\geq1}$ is equi-continuous  in $C([0,T],\mR^d)$.

\vskip 0.2cm
Hence, $\{K^{u_n}\}_{n\geq1}$ is pre-compact in $C([0,T],\mR^d)$.

\vskip 0.3cm
Let $\widetilde{K}$ be any limit of some subsequence of $\{K^{u_n}\}_{n\geq1}$ in $C([0,T],\mR^d)$. Using the  similar arguments as in the proof of Proposition \ref{Yu-con}, we can show $\widetilde{K}=K^u$, which completes the proof.

\end{proof}



\vskip 0.2cm
In order to  verify (\textbf{MDP2}), we need the following two lemmas.
\vskip 0.2cm

The following lemma is taken from  Lemma 4.2, Lemma 4.3 and Lemma 4.7 in \cite{Budhiraja-Dupuis-Ganguly}
\footnote{Note: The reference \cite{BDG} is the published version of \cite{Budhiraja-Dupuis-Ganguly}, and the paper \cite{BDG} considered a little more general assumptions than those in \cite{Budhiraja-Dupuis-Ganguly}, see (2.13) in \cite{BDG} and (2.13) in \cite{Budhiraja-Dupuis-Ganguly}. Hence some of a priori estimates are different between \cite{Budhiraja-Dupuis-Ganguly} and \cite{BDG}, for example,  Lemma 4.2, Lemma 4.3, Lemma 4.7 and Lemma 4.8 of \cite{BDG} are different with Lemma 4.2, Lemma 4.3, Lemma 4.7 and Lemma 4.8 of \cite{Budhiraja-Dupuis-Ganguly}. In this paper, we use the same assumption with those in \cite{Budhiraja-Dupuis-Ganguly}, and hence, we use the a priori estimates in \cite{Budhiraja-Dupuis-Ganguly}}.
\begin{lemma}\label{lem 234}
Fix $m\in(0,\infty)$.

\begin{itemize}
  \item[(a)] There exists $\varsigma_m\in(0,\infty)$ such that for all $I\in\mathcal{B}([0,T])$ and $\e\in(0,\infty)$,
\begin{eqnarray}\label{eq lem 4.2}
\sup_{\psi\in{S}_{+,\e}^m}\int_{Z\times I}\Big(L_1^2(y)+L_2^2(y)+L_3^2(z)\Big)\psi(y,s)\nu(\dif y)\dif s
\leq
\varsigma_m(a^2(\e)+Leb_T(I)).
\end{eqnarray}
  \item[(b)] There exist $\Gamma_m, \rho_m:(0,\infty)\to(0,\infty)$ such that $\Gamma_m(s)\downarrow 0$ as $s\uparrow \infty$, and for all $I\in\mathcal{B}([0,T])$ and
$\e,\beta\in(0,\infty)$,
\begin{align}\label{eq lem 4.3 1}
&\sup_{\varphi\in{S}^m_\e}\int_{Z\times I}\Big(L_1(z)+L_2(z)+L_3(z)\Big)|\varphi(y,s)|1_{\{|\varphi|\geq \beta/a(\e)\}}(y,s)\nu(\dif y)\dif s
\nonumber\\
\leq&
\Gamma_m(\beta)(1+\sqrt{Leb_T(I)}),
\end{align}
and
\begin{align}\label{eq lem 4.3 2}
&\sup_{\varphi\in{S}^m_\e}\int_{Z\times I}\!\!\Big(L_1(z)+L_2(z)+L_3(z)\Big)|\varphi(y,s)|\nu(\dif y)\dif s\cr
\leq&
\rho_m(\beta)\sqrt{Leb_T(I)}+\Gamma_m(\beta)a(\e).
\end{align}
  \item[(c)] For any $\beta>0$,
\begin{eqnarray}\label{eq lem 4.7}
\lim_{\e\to 0}\sup_{\varphi\in{S}^m_\e}\int_{Z\times [0,T]}\Big(L_1(z)+L_2(z)+L_3(z)\Big)|\varphi(y,s)|1_{\{|\varphi|>\beta/a(\e)\}}(y,s)\nu(\dif y)\dif s
=
0.
\end{eqnarray}
\end{itemize}
\end{lemma}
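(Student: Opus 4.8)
For the three estimates of Lemma~\ref{lem 234} the plan is to adapt to the present $\sigma$-finite intensity $\nu$ the arguments of Lemmas~4.2, 4.3 and 4.7 of \cite{Budhiraja-Dupuis-Ganguly}. Two ingredients are used throughout. First, the convex conjugate of $\ell(x)=x\log x-x+1$ is $\ell^\ast(u)=e^{u}-1$, so Young's inequality gives $xy\le \ell(x)+e^{y}-1$ for $x\ge0$, $y\in\mathbb R$. Second, since $L_1,L_2,L_3\in\mathcal H\cap L^2(\nu)$, there is a $\theta_0>0$ with $e^{\theta_0 L_i^2}-1\in L^1(\nu)$ for each $i$: on the finite-$\nu$-measure set $\{L_i>1\}$ this follows from the exponential moment in the definition of $\mathcal H$ after shrinking $\theta_0$ below its constant, while on $\{0<L_i\le1\}$ one uses $e^{\theta_0 L_i^2}-1\le C_{\theta_0}L_i^2$ together with $L_i\in L^2(\nu)$. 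In particular the measures $L_i^2\,\dif\nu$ and $(e^{\theta_0 L_i^2}-1)\,\dif\nu$ on $Z$ are finite, hence absolutely continuous with respect to $\nu$; this uniform absolute continuity is what will drive the limits in (c).

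For (a) one applies Young's inequality pointwise with $x=\psi(s,z)$, $y=\theta_0 L_i^2(z)$, obtaining $L_i^2\psi\le\theta_0^{-1}\bigl(\ell(\psi)+e^{\theta_0 L_i^2}-1\bigr)$; integrating over $Z\times I$, bounding $\int_{Z\times I}\ell(\psi)\le Q_2(\psi)\le m a^2(\e)$ for $\psi\in S^m_{+,\e}$ and $\int_{Z\times I}(e^{\theta_0 L_i^2}-1)\,\nu(\dif z)\,\dif s=\mathrm{Leb}_T(I)\,\|e^{\theta_0 L_i^2}-1\|_{L^1(\nu)}$, and summing over $i=1,2,3$ yields (a) with an explicit $\varsigma_m$.

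For (b), fix $\beta>0$, write $\varphi=(\psi-1)/a(\e)$ with $\psi\in S^m_{+,\e}$, and split $\{|\varphi|\ge\beta/a(\e)\}=\{\psi\le1-\beta\}\cup\{\psi\ge1+\beta\}$ (the first piece being empty for $\beta\ge1$). On $\{\psi\le1-\beta\}$ one has $|\varphi|\le a(\e)^{-1}$, and since $\ell$ decreases on $[0,1]$, Chebyshev's inequality gives $(\mathrm{Leb}\otimes\nu)\bigl(\{\psi\le1-\beta\}\cap(I\times Z)\bigr)\le m a^2(\e)/\ell(1-\beta)$, so Cauchy--Schwarz against $L_i\in L^2(\nu)$ bounds the contribution by $\|L_i\|_{L^2(\nu)}\sqrt{m/\ell(1-\beta)}\,\sqrt{\mathrm{Leb}_T(I)}$. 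On $\{\psi\ge1+\beta\}$ one uses $|\varphi|\le\psi/a(\e)$ and the elementary fact that $\psi\mapsto\ell(\psi)/\psi=\log\psi-1+\psi^{-1}$ increases on $[1,\infty)$, hence $\psi\le\ell(\psi)/h(\beta)$ there with $h(\beta):=\log(1+\beta)-1+(1+\beta)^{-1}>0$ and $h(\beta)\to\infty$; then Cauchy--Schwarz together with part (a) gives a bound $\sqrt{\varsigma_m m/h(\beta)}\bigl(a(\e)+\sqrt{\mathrm{Leb}_T(I)}\bigr)$. Summing over $i$, using $\sup_\e a(\e)<\infty$, and absorbing the $\beta$-dependence into a single $\Gamma_m(\beta)$ (which for $\beta\ge1$ is $\lesssim h(\beta)^{-1/2}$, hence decreases to $0$) yields (b).

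For (c) one re-runs the splitting of (b) with $I=[0,T]$ and tracks the $\e$-dependence: since $(\mathrm{Leb}\otimes\nu)(\{\psi\le1-\beta\})\le m a^2(\e)/\ell(1-\beta)\to0$ and $(\mathrm{Leb}\otimes\nu)(\{\psi\ge1+\beta\})\le m a^2(\e)/\ell(1+\beta)\to0$ uniformly over $\psi\in S^m_{+,\e}$, absolute continuity of $L_i^2\,\dif\nu\,\dif s$ forces $\int_{\{\psi\le1-\beta\}}L_i^2\to0$, and applying Young's inequality on $\{\psi\ge1+\beta\}$ together with absolute continuity of $(e^{\theta_0 L_i^2}-1)\,\dif\nu\,\dif s$ forces $\int_{\{\psi\ge1+\beta\}}L_i^2\psi\le\theta_0^{-1}\bigl(\int_{\{\psi\ge1+\beta\}}(e^{\theta_0 L_i^2}-1)+m a^2(\e)\bigr)\to0$, all uniformly in $\psi$; feeding these into the Cauchy--Schwarz bounds of (b) gives (c). The main obstacle is exactly the $\sigma$-finiteness of $\nu$: the $L_i$ need not be $\nu$-integrable, so the naive $L^1$ estimates available when $\nu$ is finite must be replaced by Cauchy--Schwarz pairings of $L_i$ with $|\varphi|$ (or with $\psi$) and Chebyshev control of the exceptional sets $\{|\psi-1|\ge\beta\}$ via $Q_2(\psi)\le m a^2(\e)$; keeping every estimate uniform over $S^m_{+,\e}$ while simultaneously tracking both the $\e\to0$ and $\beta\to\infty$ regimes is the bookkeeping that requires care, and it is precisely this that the footnoted discrepancy between \cite{BDG} and \cite{Budhiraja-Dupuis-Ganguly} concerns.
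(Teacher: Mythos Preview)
The paper does not give its own proof of this lemma; it simply records that the three estimates are taken from Lemmas~4.2, 4.3 and 4.7 of \cite{Budhiraja-Dupuis-Ganguly}. Your sketch is precisely the argument of those lemmas, specialised to the present weight $L_1+L_2+L_3$ and the classes $S^m_{+,\e}$, $S^m_\e$: the Fenchel pairing $xy\le\ell(x)+e^y-1$ together with $e^{\theta_0 L_i^2}-1\in L^1(\nu)$ for (a), the decomposition $\{|\psi-1|\ge\beta\}=\{\psi\le1-\beta\}\cup\{\psi\ge1+\beta\}$ with Chebyshev via $\ell$ and Cauchy--Schwarz against $L_i\in L^2(\nu)$ for (b), and the uniform absolute continuity of the finite measures $L_i^2\,\nu(\dif z)\dif s$ and $(e^{\theta_0 L_i^2}-1)\,\nu(\dif z)\dif s$ to upgrade (b) to the limit (c). The only point to make explicit is that in \eqref{eq lem 4.3 1} you absorb the stray $a(\e)$ into the constant $1$ via $\sup_{\e} a(\e)<\infty$, which is implicit (and harmless, since only small $\e$ is ever used) but not literally stated among the standing hypotheses on $a(\cdot)$; if you want the estimate for all $\e>0$ as written, either add this boundedness assumption or keep the $a(\e)$ visible and note that the applications in Lemma~\ref{lem 6.6} and Proposition~\ref{Lemma 6.7} only need $\e$ small.
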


%
%
%
%
%
\vskip 0.5cm

\bl\label{lem 6.6}
 Let $M^{u_\e}$ be the solution to (\ref{F-EQU}). Then  there exists $\kappa_0>0$ such that
\begin{eqnarray}\label{eq sup M}
\sup_{\e\in(0,\kappa_0]}\mathbb{E}\sup_{t\in[0,T]}|M^{u_\e}(t)|^2<\infty.
\end{eqnarray}
\el
\begin{proof}
By It\^o's formula, for any $t\in[0,T]$,
\begin{align}\label{eq MDP s}
&|M^{u_\e}(t)|^2\cr
=&
  \frac{2}{a(\e)}\int_0^t\<b_\e(s,a(\e)M^{u_\e}(s)+X^0(s),Law(X^\e(s)))
       -b(s,X^0(s),Law(X^0(s))),M^{u_\e}(s)\>\dif s\cr
&
+
\frac{2\sqrt{\e}}{a(\e)}\int_0^t\<M^{u_\e}(s),\sigma_\e(s,a(\e)M^{u_\e}(s)+X^0(s),Law(X^\e(s)))\dif  W(s)\>\cr
&
+
\frac{\e}{a^2(\e)}\int_0^t\|\sigma_\e(s,a(\e)M^{u_\e}(s)+X^0(s),Law(X^\e(s)))\|_{\mathcal{L}_2}^2\dif s\cr
&
+
2\int_0^t\<\sigma_\e(s,a(\e)M^{u_\e}(s)+X^0(s),Law(X^\e(s)))\phi_\e(s),M^{u_\e}(s)\>\dif s\cr
&+\frac{2\e}{a(\e)}\int_0^t\!\!\int_Z\<G_\e(s,a(\e)M^{u_\e}(s-)+X^0(s-),Law(X^\e(s)),z),
M^{u_\e}(s-)\>\widetilde{N}^{\e^{-1}\psi_\e}(\dif z,\dif s)\cr
&+\frac{\e^2}{a^2(\e)}\int_0^t\!\!\int_Z|G_\e(s,a(\e)M^{u_\e}(s-)+X^0(s-),Law(X^\e(s)),z)|^2
N^{\e^{-1}\psi_\e}(\dif z,\dif s)\cr
&+\frac{2}{a(\e)}\int_0^t\!\!\int_Z\<G_\e(s,a(\e)M^{u_\e}(s)+X^0(s),Law(X^\e(s)),z)
(\psi_\e(s,z)-1),M^{u_\e}(s)\>\nu(\dif z)\dif s\cr
=:&I_1(t)+I_2(t)+I_3(t)+I_4(t)+I_5(t)+I_6(t)+I_7(t).
\end{align}

(\ref{eq a ep}), (A3), (B1) and (B3) imply that there exists $\e_1>0$ such that
\begin{eqnarray}\label{eq xianz ep}
\frac{\e}{a^2(\e)}\vee a(\e)\vee \varrho_{b,\e}\vee\varrho_{\sigma,\e}\vee\varrho_{G,\e}\vee \frac{\varrho_{b,\e}}{a(\e)}\in(0,\frac{1}{2}],\ \ \forall\e\in(0,\e_1].
\end{eqnarray}
Recall the constant $\e_0$ in Lemma \ref{yey0}. Set $\e_2=\e_0\wedge\e_1\wedge \frac{1}{2}$.

Note that we
always use $C$ to denote a generic constant which may change from line to line and is independent of $\epsilon$.
\vskip 0.2cm
By (A1), (A3), Lemma \ref{yey0} and (\ref{eq Ct}), for any $\e\in(0,\e_2]$,
\begin{align}\label{I1}
I_1(t)
\leq&
\frac{2\varrho_{b,\e}}{a(\e)}\int_0^t|M^{u_\e}(s)|\dif s\cr
&+
2L\int_0^t|M^{u_\e}(s)|^2\dif s
+\frac{2}{a(\e)}\int_0^t|M^{u_\e}(s)|\mW_2(Law(X^\e(s)),Law(X^0(s)))\dif s\cr
\leq&C\int_0^t|M^{u_\e}(s)|^2\dif s+C,
\end{align}
\begin{align}\label{I3}
&I_3(t)\cr
\leq&
\frac{C\e\varrho_{\sigma,\e}^2}{a^2(\e)}
+
C\e\int_0^t|M^{u_\e}(s)|^2\dif s\cr
&+\frac{C\e}{a^2(\e)}\left(\int_0^T\mW^2_2(Law(X^\e(s)),Law(X^0(s)))+
\|\sigma(s,X^0(s),Law(X^0(s)))\|_{\mathcal{L}_2}^2\dif s\right)\cr
\leq&C\int_0^t|M^{u_\e}(s)|^2\dif s+C,
\end{align}
and
\begin{align}\label{I4}
&I_4(t)\cr
\leq&
2\varrho_{\sigma,\e}\int_0^t|\phi_\e(s)||M^{u_\e}(s)|ds\cr
&+
C\int_0^t
   \left(a(\e)|M^{u_\e}(s)|+\mW_2(Law(X^\e(s)),Law(X^0(s)))
          +
           \|\sigma(s,X^0(s),Law(X^0(s)))\|_{\mathcal{L}_2}
   \right)\cr
   &\ \ \ \ \ \ \ \ \ |\phi_\e(s)||M^{u_\e}(s)|\dif s\cr
\leq&
Ca(\e)\int_0^t|M^{u_\e}(s)|^2|\phi_\e(s)|\dif s+C\Big(\Big(\mathbb{E}(\sup_{s\in[0,T]}|X^\e(s)-X^0(s)|^2)\Big)^{1/2}+1\Big)\int_0^t|\phi_\e(s)||M^{u_\e}(s)|\dif s\cr
&+
C\int_0^t\|\sigma(s,X^0(s),Law(X^0(s)))\|_{\mathcal{L}_2}|\phi_\e(s)||M^{u_\e}(s)|\dif s\cr
\leq&
C\int_0^t|M^{u_\e}(s)|^2(|\phi_\e(s)|^2+1)\dif s\cr
&+C\int_0^T\|\sigma(s,X^0(s),Law(X^0(s)))\|_{\mathcal{L}_2}^2\dif s
+C\int_0^T|\phi_\e(s)|^2\dif s\cr
\leq&
C\int_0^t|M^{u_\e}(s)|^2(|\phi_\e(s)|^2+1)\dif s+C,
\end{align}
where the last inequality follows from the fact that  $\phi_\e\in\mathcal{S}^m_1$.
\vskip 0.2cm

Recall $\varphi_\e(s,z)=\frac{\psi_\e(s,z)-1}{a(\e)}$. By (B1) and Lemma \ref{yey0}, for any $\e\in(0,\e_2]$,
\begin{align}\label{I7}
&I_7(t)\nonumber\\
=&2\int_0^t\!\!\int_Z\<G_\e(s,a(\e)M^{u_\e}(s)+X^0(s),Law(X^\e(s)),z)
\frac{\psi_\e(s,z)-1}{a(\e)},M^{u_\e}(s)\>\nu(\dif z)\dif s\nonumber\\
\leq&
2\int_0^t\!\!\int_Z\varrho_{G,\e}L_3(z)|\varphi_\e(s,z)||M^{u_\e}(s)|\nu(\dif z)\dif s\nonumber\\
&+2\int_0^t\!\!\int_Z\<\Big(G(s,a(\e)M^{u_\e}(s)+X^0(s),Law(X^\e(s)),z)-G(s,0,\delta_0,z)\Big)
\varphi_\e(s,z),M^{u_\e}(s)\>\nu(\dif z)\dif s\nonumber\\
&+
2\int_0^t\!\!\int_Z\<G(s,0,\delta_0,z)
\varphi_\e(s,z),M^{u_\e}(s)\>\nu(\dif z)\dif s\nonumber\\
\leq&
C\int_0^t\!\!\int_ZL_1(z)\left(a(\e)|M^{u_\e}(s)|+|X^0(s)|+\mW_2(Law(X^\e(s)),\delta_0)\right)
|\varphi_\e(s,z)||M^{u_\e}(s)|\nu(\dif z)\dif s\nonumber\\
+&C\int_0^t\!\!\int_Z(L_2(z)+L_3(z))|\varphi_\e(s,z)||M^{u_\e}(s)|\nu(\dif z)\dif s\\
\leq&C\int_0^t\!\!\int_Z(L_1(z)+L_2(z)+L_3(z))|\varphi_\e(s,z)|\nu(\dif z)|M^{u_\e}(s)|^2\dif s\nonumber\\
&+
C\int_0^T\!\!\int_Z(L_1(z)+L_2(z)+L_3(z))|\varphi_\e(s,z)|\nu(\dif z)\dif s.\nonumber
\end{align}
To deduce the last inequality, the following facts have been used
\begin{itemize}
  \item $X^0\in C([0,T],\mathbb{R}^d)$,
  \item $\mW_2(Law(X^\e(s)),\delta_0)\leq \mW_2(Law(X^\e(s)),Law(X^0(s)))+|X^0(s)|$.
\end{itemize}

Set
$$
D_\e:=\int_0^T(|\phi_\e(s)|^2+1)\dif s
+\int_0^T\!\!\int_Z(L_1(z)+L_2(z)+L_3(z))|\varphi_\e(s,z)|\nu(\dif z)\dif s.
$$

By substituting (\ref{I1})-(\ref{I7}) back into (\ref{eq MDP s}) and applying Gronwall's inequality, we obtain
\begin{align}\label{eq dd}
|M^{u_\e}(t)|^2
\leq e^{C D_\e}
\Bigg\{CD_\e+\sup_{s\in[0,T]}|I_2(s)+I_5(s)+I_6(s)|\Bigg\},\ \forall\e\in(0,\e_2],\ t\in[0,T].
\end{align}
Since $(\phi_\e,\varphi_\e)\in{S}^m_1\times{S}^m_\e\ P$-a.s., we have
\begin{eqnarray}\label{eq dd1}
\frac{1}{2}\int_0^T|\phi_\e(s)|^2\dif s\leq m,\ \ P\text{-a.s.} \ \forall \e\in(0,\e_2].
\end{eqnarray}

Hence, by (\ref{eq lem 4.3 2}), (\ref{eq dd}) and (\ref{eq dd1}), there exists a constant $\Lambda\in(0,\infty)$ such that for each $\e\in(0,\e_2]$,

\begin{align}\label{F}
&\mE\Big(\sup\limits_{t\in[0,T]}|M^{u_\e}(t)|^2\Big)\nonumber\\
\leq&\Lambda \Big\{1+\mE\Big(\sup\limits_{t\in[0,T]}|I_2(t)|\Big)
+\mE\Big(\sup\limits_{t\in[0,T]}|I_5(t)|\Big)
+\mE\Big(\sup\limits_{t\in[0,T]}|I_6(t)|\Big)\Big\}.
\end{align}
By Burkholder-Davis-Gundy's inequality, (A1), (A3), Young's inequality, Lemma \ref{yey0}, (\ref{eq Ct}) and (\ref{I3}), for any $\e\in(0,\e_2]$,
\begin{align}\label{I2}
&\mE\Big(\sup\limits_{t\in[0,T]}|I_2(t)|\Big)\cr
\leq&
\frac{C\sqrt{\e}}{a(\e)}\mE\left(\int_0^T|M^{u_\e}(s)|^2
\|\sigma_\e(s,a(\e)M^{u_\e}(s)+X^0(s),Law(X^\e(s)))\|_{\mathcal{L}_2}^2\dif s\right)^{\frac{1}{2}}\cr
\leq&
\frac{C\sqrt{\e}}{a(\e)}\mE\Big(\sup\limits_{s\in[0,T]}|M^{u_\e}(s)|^2\Big)
+
\frac{C\sqrt{\e}}{a(\e)}\mE\int_0^T\|\sigma_\e(s,a(\e)M^{u_\e}(s)+X^0(s),Law(X^\e(s)))\|_{\mathcal{L}_2}^2\dif s\cr
\leq&\frac{C\sqrt{\e}}{a(\e)}\mE\Big(\sup\limits_{s\in[0,T]}|M^{u_\e}(s)|^2\Big)
+
\frac{C\sqrt{\e}\varrho_{\sigma,\e}^2}{a(\e)}
+
C\sqrt{\e}a(\e)\mathbb{E}\int_0^T|M^{u_\e}(s)|^2\dif s\cr
&+\frac{C\sqrt{\e}}{a(\e)}\left(\int_0^T\mW^2_2(Law(X^\e(s)),Law(X^0(s)))+
\|\sigma(s,X^0(s),Law(X^0(s)))\|_{\mathcal{L}_2}^2\dif s\right)\cr
\leq&
C\Big(\frac{\sqrt{\e}}{a(\e)}+\sqrt{\e}a(\e)\Big)
\mE\Big(\sup\limits_{s\in[0,T]}|M^{u_\e}(s)|^2\Big)
+C.
\end{align}
Similarly, using (B1), we have  for any $\e\in(0,\e_2]$,
\begin{align}\label{I6-5}
&\Lambda\Big(\mE\Big(\sup\limits_{t\in[0,T]}|I_5(t)|\Big)+\mE\Big(\sup\limits_{t\in[0,T]}|I_6(t)|\Big)\Big)\cr
\leq&
\frac{C\e}{a(\e)}\mE\left(\int_0^T\!\!\!\int_Z|G_\e(s,a(\e)M^{u_\e}(s)+X^0(s),Law(X^\e(s)),z)|^2
|M^{u_\e}(s)|^2N^{\e^{-1}\psi_\e}(\dif z,\dif s)\right)^\frac{1}{2}\cr
&+\frac{\e}{a^2(\e)}\mE\int_0^T\!\!\!\int_Z|G_\e(s,a(\e)M^{u_\e}(s)+X^0(s),Law(X^\e(s)),z)|^2
\psi_\e(s,z)\nu(\dif z)\dif s\cr
\leq&
\frac{1}{10}\mE\Big(\sup\limits_{s\in[0,T]}|M^{u_\e}(s)|^2\Big)\cr
&+
\frac{C\e}{a^2(\e)}\mE\int_0^T\!\!\!\int_Z|G_\e(s,a(\e)M^{u_\e}(s)+X^0(s),Law(X^\e(s)),z)|^2
\psi_\e(s,z)\nu(\dif z)\dif s\cr
\leq&
\frac{1}{10}\mE\Big(\sup\limits_{s\in[0,T]}|M^{u_\e}(s)|^2\Big)\cr
&+
\frac{C\e}{a^2(\e)}\mE\int_0^T\!\!\!\int_Z\Big(a^2(\e)|M^{u_\e}(s)|^2+|X^0(s)|^2+\mE(|X^\e(s)|^2)\Big)L_1^2(z)
\psi_\e(s,z)\nu(\dif z)\dif s\cr
&+
\frac{C\e}{a^2(\e)}\mE\int_0^T\!\!\!\int_Z|G(s,0,\delta_0,z)|^2
\psi_\e(s,z)\nu(\dif z)\dif s
+
\frac{C\e\varrho_{G,\e}^2}{a^2(\e)}\mE\int_0^T\!\!\!\int_ZL_3^2(z)
\psi_\e(s,z)\nu(\dif z)\dif s
\cr
\leq&
    \frac{1}{10}\mE\Big(\sup\limits_{s\in[0,T]}|M^{u_\e}(s)|^2\Big)
     +
     C\e\sup\limits_{\psi\in{S}^K_{+,\e}}
            \int_0^T\!\!\!\int_ZL_1^2(z)\psi(s,z)\nu(\dif z)\dif s
                \mE\Big(\sup\limits_{s\in[0,T]}|M^{u_\e}(s)|^2\Big)\cr
       &\ \ \
       +
       C\sup\limits_{\psi\in{S}^K_{+,\e}}
            \int_0^T\!\!\!\int_Z(L_1^2(z)+L_2^2(z)+L_3^2(z))\psi(s,z)\nu(\dif z)\dif s\cr
         &\ \ \ \ \ \ \ \cdot\Big(
             1+\sup_{s\in[0,T]}|X^0(s)|^2+\mathbb{E}\Big(\sup_{s\in[0,T]}|X^\e(s)-X^0(s)|^2\Big)
          \Big)\cr
\leq&
\frac{1}{10}\mE\Big(\sup\limits_{s\in[0,T]}|M^{u_\e}(s)|^2\Big)
+
C\e\sup\limits_{\psi\in{S}^K_{+,\e}}
\int_0^T\!\!\!\int_ZL_1^2(z)\psi(s,z)\nu(\dif z)\dif s
\mE\Big(\sup\limits_{s\in[0,T]}|M^{u_\e}(s)|^2\Big)\cr
&+
C\sup\limits_{\psi\in{S}^K_{+,\e}}
\int_0^T\!\!\!\int_Z(L_1^2(z)+L_2^2(z)+L_3^2(z))\psi(s,z)\nu(\dif z)\dif s.
\end{align}
By (\ref{F})--(\ref{I6-5}), we have for $\forall \e\in(0,\e_2]$,
\begin{align*}
&\Big(\frac{9}{10}-C\frac{\sqrt{\e}}{a(\e)}+C\sqrt{\e}a(\e)-C\e\sup\limits_{\psi\in{S}^K_{+,\e}}
\int_0^T\!\!\!\int_ZL_1^2(z)\psi(s,z)\nu(\dif z)\dif s\Big)\mE\Big(\sup\limits_{s\in[0,T]}|M^{u_\e}(s)|^2\Big)\\
\leq&
C\Big(1+\sup\limits_{\psi\in{S}^K_{+,\e}}
\int_0^T\!\!\!\int_Z(L_1^2(z)+L_2^2(z)+L_3^2(z))\psi(s,z)\nu(\dif z)\dif s\Big).
\end{align*}
In view of  (\ref{eq lem 4.2}) and (\ref{eq a ep}), this yields that there exists $\kappa_0>0$ such that
\begin{eqnarray*}
\sup_{\e\in(0,\kappa_0]}\mE\Big(\sup\limits_{s\in[0,T]}|M^{u_\e}(s)|^2\Big)
<
\infty.
\end{eqnarray*}

The proof of this lemma is completed.
\end{proof}

\vskip 0.3cm
The verification of (\textbf{MDP2}) is given in the next proposition.
\vskip 0.2cm
Recall $\widetilde{u}_\e$ in (\ref{eq uuu}).
\begin{proposition}\label{Lemma 6.7}
For any $\varpi>0$,
 $\lim_{\e\rightarrow0}P\Big(\sup\limits_{t\in[0,T]}|M^{u_\e}(t)-K^{\widetilde{u}_\e}(t)|>\varpi\Big)=0.$
\end{proposition}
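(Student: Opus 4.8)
The plan is to compare $M^{u_\e}$, the solution of \eqref{F-EQU}, with $K^{\widetilde u_\e}=\Upsilon^0(\widetilde u_\e)$, the solution of \eqref{Y(U)} for the control $\widetilde u_\e=(\phi_\e,\varphi_\e 1_{\{|\varphi_\e|\le\beta/a(\e)\}})$, via an It\^o--Gronwall estimate after a localization. First I would record the uniform bounds to be used: $\sup_{\e\le\kappa_0}\mathbb{E}\sup_{t\in[0,T]}|M^{u_\e}(t)|^2<\infty$ by Lemma~\ref{lem 6.6}; $\Xi:=\sup_{\e}\sup_{t\in[0,T]}|K^{\widetilde u_\e}(t)|<\infty$, since $\widetilde u_\e$ takes values in the bounded set $S^m_1\times B_2(\sqrt{m\kappa_2(1)})$ and Proposition~\ref{lem 3.8} applies; and, combining Lemma~\ref{yey0} with \eqref{eq a ep}, (A3), (B1), (B3),
$$
\theta_\e:=\Big(\mathbb{E}\sup_{t\in[0,T]}|M^\e(t)|^2\Big)^{1/2}=\frac1{a(\e)}\Big(\mathbb{E}\sup_{t\in[0,T]}|X^\e(t)-X^0(t)|^2\Big)^{1/2}\longrightarrow0 .
$$
For $R>0$ put $\tau^\e_R:=\inf\{t\ge0:|M^{u_\e}(t)|\ge R\}\wedge T$; by Chebyshev's inequality and Lemma~\ref{lem 6.6}, $P(\tau^\e_R<T)\le C/R^2$ for all $\e\le\kappa_0$. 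Setting $V^\e:=M^{u_\e}-K^{\widetilde u_\e}$ (so $V^\e(0)=0$ and $|V^\e|\le R+\Xi$ on $[0,\tau^\e_R)$), it suffices to show that, for each fixed $R$, $\mathbb{E}\sup_{t\le T\wedge\tau^\e_R}|V^\e(t)|^2\to0$ as $\e\to0$: then
$$
P\Big(\sup_{t\in[0,T]}|V^\e(t)|>\varpi\Big)\le P(\tau^\e_R<T)+\frac1{\varpi^2}\mathbb{E}\sup_{t\le T\wedge\tau^\e_R}|V^\e(t)|^2 ,
$$
and letting $\e\to0$ and then $R\to\infty$ finishes the proof.

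Next I would apply It\^o's formula to $|V^\e(t\wedge\tau^\e_R)|^2$. Since $K^{\widetilde u_\e}$ solves the linear ODE \eqref{Y(U)}, the difference of drifts splits into a $b$--part, a $\sigma\phi_\e$--part and a $G$--part, while from $M^{u_\e}$ there also appear the Brownian martingale term of order $\sqrt\e/a(\e)$, its quadratic variation of order $\e/a^2(\e)$, the compensated Poisson martingale term of order $\e/a(\e)$ and its quadratic variation of order $\e^2/a^2(\e)$. The decisive term is the $b$--part, which I would write (after a first-order Taylor expansion and subtracting the drift $b'_2(s,X^0,Law(X^0))K^{\widetilde u_\e}$ of $K^{\widetilde u_\e}$) as the sum of
$$
\tfrac1{a(\e)}(b_\e-b)(s,a(\e)M^{u_\e}+X^0,Law(X^\e)),\qquad
\tfrac1{a(\e)}\big(b(s,a(\e)M^{u_\e}+X^0,Law(X^\e))-b(s,a(\e)M^{u_\e}+X^0,Law(X^0))\big),
$$
$$
b'_2(s,X^0,Law(X^0))V^\e(s),\qquad
\int_0^1\!\big(b'_2(s,X^0+\theta a(\e)M^{u_\e},Law(X^0))-b'_2(s,X^0,Law(X^0))\big)\dif\theta\cdot M^{u_\e}(s).
$$
The first term is bounded by $\varrho_{b,\e}/a(\e)\to0$ by (B3); the second by $L\mW_2(Law(X^\e(s)),Law(X^0(s)))/a(\e)\le L\theta_\e\to0$ by (A1) and Remark~\ref{rem W2}; the third, paired against $V^\e$, gives the Gronwall term $2|b'_2(s,X^0(s),Law(X^0(s)))|\,|V^\e(s)|^2$ with integrable coefficient by (B2); and the Taylor remainder, on $[0,\tau^\e_R)$, is bounded in modulus by $L'(1+2\sup_t|X^0(t)|^{q'}+(a(\e)R)^{q'})a(\e)R\to0$ by (B2). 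After pairing with $V^\e$ and using Young's inequality, all the $\to0$ quantities yield error terms whose integral over $[0,T]$ vanishes.

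For the $\sigma\phi_\e$--part, $(\sigma_\e(s,a(\e)M^{u_\e}+X^0,Law(X^\e))-\sigma(s,X^0,Law(X^0)))\phi_\e(s)$ has norm $\le(\varrho_{\sigma,\e}+a(\e)(R+\theta_\e))|\phi_\e(s)|$ on $[0,\tau^\e_R)$ by (A1),(A3); paired with $V^\e$ and estimated by Young's inequality it contributes a Gronwall coefficient $\lesssim 1+|\phi_\e(s)|^2$ (uniformly integrable as $\phi_\e\in\mathcal S^m_1$) plus a vanishing error. For the $G$--part, writing $\varphi_\e=(\psi_\e-1)/a(\e)$ and $\widetilde\varphi_\e=\varphi_\e 1_{\{|\varphi_\e|\le\beta/a(\e)\}}$, the difference is
$$
\int_Z G_\e(s,a(\e)M^{u_\e}+X^0,Law(X^\e),z)\varphi_\e 1_{\{|\varphi_\e|>\beta/a(\e)\}}\nu(\dif z)+\int_Z\big(G_\e(s,a(\e)M^{u_\e}+X^0,Law(X^\e),z)-G(s,X^0,Law(X^0),z)\big)\widetilde\varphi_\e\,\nu(\dif z),
$$
the ``main part'' $\int_Z G(s,X^0,Law(X^0),z)\widetilde\varphi_\e\nu(\dif z)$ cancelling precisely the $G$--drift of $K^{\widetilde u_\e}$. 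On $[0,\tau^\e_R)$, (B1) and Remark~\ref{rem W2} give $|G_\e(s,a(\e)M^{u_\e}+X^0,Law(X^\e),z)|\le C_R(L_1(z)+L_2(z)+L_3(z))$ and $|G_\e-G|(s,a(\e)M^{u_\e}+X^0,Law(X^\e),z)\le(\varrho_{G,\e}+a(\e)(R+\theta_\e))(L_1(z)+L_3(z))$; hence the first integral is $\le C_R\int_Z(L_1+L_2+L_3)|\varphi_\e|1_{\{|\varphi_\e|>\beta/a(\e)\}}\nu(\dif z)$, whose time integral $\to0$ by Lemma~\ref{lem 234}(c), and the second is $\le(\varrho_{G,\e}+a(\e)(R+\theta_\e))\int_Z(L_1+L_3)|\varphi_\e|\nu(\dif z)$, whose time integral is uniformly bounded by Lemma~\ref{lem 234}(b) and hence $\to0$. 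The four noise-related terms are controlled by the Burkholder--Davis--Gundy inequality together with $\|\sigma_\e(s,a(\e)M^{u_\e}+X^0,Law(X^\e))\|^2_{\mathcal L_2}\le C_R(1+\|\sigma(s,0,\delta_0)\|^2_{\mathcal L_2})$ and $\int_0^T\!\!\int_Z(L_1^2+L_2^2+L_3^2)\psi_\e\,\nu(\dif z)\dif s\le\varsigma_m(a^2(\e)+T)$ from Lemma~\ref{lem 234}(a) (both valid on $[0,\tau^\e_R)$): they are of order $\tfrac{\sqrt\e}{a(\e)}C_R$ and $\tfrac{\e}{a^2(\e)}C_R$ respectively and tend to $0$ by \eqref{eq a ep}. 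Collecting everything gives, for $\e$ small,
$$
\mathbb{E}\sup_{r\le t\wedge\tau^\e_R}|V^\e(r)|^2\le\rho_\e(R)+C\int_0^t\big(1+|\phi_\e(s)|^2+|b'_2(s,X^0(s),Law(X^0(s)))|\big)\,\mathbb{E}\sup_{r\le s\wedge\tau^\e_R}|V^\e(r)|^2\,\dif s
$$
with $\rho_\e(R)\to0$ for fixed $R$, and Gronwall's inequality (the kernel being integrable uniformly in $\e$) yields $\mathbb{E}\sup_{t\le T\wedge\tau^\e_R}|V^\e(t)|^2\le\rho_\e(R)e^{C_m}\to0$, which closes the argument.

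The main obstacle is the drift term: one must replace the $a(\e)^{-1}$--rescaled increment of $b$ by the linearization $b'_2(\cdot,X^0,Law(X^0))K^{\widetilde u_\e}$, and the Taylor remainder $b'_2(s,X^0+\theta a(\e)M^{u_\e},Law(X^0))-b'_2(s,X^0,Law(X^0))$ can be handled only thanks to the localization at $\tau^\e_R$, which turns the superlinear growth allowed by (B2) into a harmless $O(a(\e)R^{q'+1})$, combined with the $L^2$--bound of Lemma~\ref{lem 6.6} that makes $P(\tau^\e_R<T)$ uniformly small. A secondary delicate point is the splitting of the jump term at the truncation level $\beta/a(\e)$: the part above it is dealt with by Lemma~\ref{lem 234}(c), while the part below reproduces exactly the $G$--drift of $K^{\widetilde u_\e}$ up to vanishing errors; keeping careful track throughout of which constants depend on $R$ (hence must be frozen before sending $\e\to0$) and which are genuinely vanishing is where the proof is technically heaviest.
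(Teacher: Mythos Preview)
Your proposal is correct and follows essentially the same route as the paper: localize via the stopping time $\tau^\e_R$ using Lemma~\ref{lem 6.6}, apply It\^o's formula to $|M^{u_\e}-K^{\widetilde u_\e}|^2$, handle the drift by Taylor expansion of $b$ in the spatial variable combined with (B2), (B3) and Lemma~\ref{yey0}, treat the $\sigma\phi_\e$ and $G$ pieces via (A1), (A3), (B1) and Lemma~\ref{lem 234}, control the martingale terms by BDG together with Lemma~\ref{lem 234}(a), and conclude by Gronwall followed by the splitting $P(\sup|V^\e|>\varpi)\le P(\tau^\e_R<T)+\varpi^{-2}\mathbb{E}\sup_{t\le\tau^\e_R}|V^\e|^2$.

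One presentational slip: your final displayed inequality
\[
\mathbb{E}\sup_{r\le t\wedge\tau^\e_R}|V^\e(r)|^2\le\rho_\e(R)+C\int_0^t\big(1+|\phi_\e(s)|^2+|b'_2(s,X^0(s),Law(X^0(s)))|\big)\,\mathbb{E}\sup_{r\le s\wedge\tau^\e_R}|V^\e(r)|^2\,\dif s
\]
is ill-posed as written, since the kernel $|\phi_\e(s)|^2$ is random while both sides are deterministic. The cure is to apply Gronwall \emph{pathwise} first (the integral of the kernel is a.s.\ bounded by a deterministic $C_m$ because $\phi_\e\in\mathcal S^m_1$ and $\int_0^T|b'_2|<\infty$ by (B2)), obtaining $\sup_{t}|V^\e(t\wedge\tau^\e_R)|^2\le e^{C_m}\big(\rho_\e(R)+\text{sup of the noise terms}\big)$ a.s., and only then take expectation and invoke BDG. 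This is precisely how the paper orders the two steps: it absorbs only the $b'_2$ contribution via pathwise Gronwall (with deterministic kernel), keeps the remaining pieces $I_2,\dots,I_7$ on the right, and estimates $\mathbb{E}\sup_s|I_i(s)|$ afterwards.
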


\begin{proof}
For each fixed $\e>0$ and $j\in\mN$, define a stopping time
\begin{align*}
\tau_\e^j=\inf\{t\geq0:|M^{u_\e}(t)|\geq j\}\wedge T.
\end{align*}
By  Lemma \ref{lem 6.6}, we have
\begin{eqnarray}\label{eq stop}
P(\tau_\e^j< T)\leq \frac{\mathbb{E}\Big(\sup_{t\in[0,T]}|M^{u_\e}(t)|^2\Big)}{j^2}\leq \frac{C}{j^2},\ \ \forall \e\in(0,\kappa_0]
\end{eqnarray}
where $\kappa_0$ is the same as in Lemma \ref{lem 6.6}.
\vskip 0.3cm

Let $Q^\e(s)=M^{u_\e}(s)-K^{\widetilde{u}_\e}(s)$ for each $s\in[0,T]$. Notice that the corresponding equations $M^{u_\e}$ and $K^{\widetilde{u}_\e}$ satisfied are distribution-independent SDEs. By It\^o's formula, we have
{\small
\begin{align}\label{F-Y}
&|Q^\e(t\wedge\tau_\e^j)|^2\cr
=&
2\int_0^{t\wedge\tau_\e^j}\!\!\!
\<\frac{1}{a(\e)}\Big(b_\e(s,a(\e)M^{u_\e}(s)+X^0(s),Law(X^\e(s)))-b(s,X^0(s),Law(X^0(s)))\Big)\nonumber\\
&
\ \ \ \ \ \ \ \ \ \ \ \ -b'_2(s,X^0(s),Law(X^0(s)))K^{\widetilde{u}_\e}(s), Q^\e(s)\>\dif s\cr
&
+
2\frac{\sqrt{\e}}{a(\e)}\int_0^{t\wedge\tau_\e^j}\!\!\!\<Q^\e(s),
\sigma_\e(s,a(\e)M^{u_\e}(s)+X^0(s),Law(X^\e(s)))\dif  W(s)\>\cr
&
+
\frac{\e}{a^2(\e)}\int_0^{t\wedge\tau_\e^j}\!\!\!\|
\sigma_\e(s,a(\e)M^{u_\e}(s)+X^0(s),Law(X^\e(s)))\|^2_{\mathcal{L}_2}\dif  s\cr
&
+
2\int_0^{t\wedge\tau_\e^j}\!\!\!\!
\<(\sigma_\e(s,a(\e)M^{u_\e}(s)+X^0(s),Law(X^\e(s)))\!-\!\sigma(s,X^0(s),Law(X^0(s))))\phi_\e(s), Q^\e(s)\>\dif s\cr
&
+
\frac{2\e}{a(\e)}\int_0^{t\wedge\tau_\e^j}\!\!\!
\int_Z\<G_\e(s,a(\e)M^{u_\e}(s-)+X^0(s-),Law(X^\e(s)),z),
Q^\e(s-)\>\widetilde{N}^{\e^{-1}\psi_\e}(\dif z,\dif s)\cr
&
+
\frac{\e^2}{a^2(\e)}\int_0^{t\wedge\tau_\e^j}
\!\!\int_Z|G_\e(s,a(\e)M^{u_\e}(s-)+X^0(s-),Law(X^\e(s)),z)|^2
N^{\e^{-1}\psi_\e}(\dif z,\dif s)\cr
&
+
2\int_0^{t\wedge\tau_\e^j}\!\!\!\int_Z\
   \langle G_\e(s,a(\e)M^{u_\e}(s)+X^0(s),Law(X^\e(s)),z)\varphi_\e(s,z)\nonumber\\
&
    \ \ \ \ \ \ \ \ \ \ -
   G(s,X^0(s),Law(X^0(s)),z)\varphi_\e(s,z)1_{\{|\varphi_\e|\leq\beta/a(\e)\}}(s,z),
    Q^\e(s)\rangle\nu(\dif z)\dif s\cr
=&I_1(t)+I_2(t)+I_3(t)+I_4(t)+I_5(t)+I_6(t)+I_7(t).
\end{align}
}

Due to (\ref{eq E M 1}) and the fact that $\tilde{u}_\e\in S^m_1\times B_2(\sqrt{m\kappa_2(1)})\ $, there exists some $\Omega^0\in\mathcal{F}$ with $P(\Omega^0)=1$ such that
\begin{align}\label{eq 3.82}
\kappa:=\sup\limits_{\e\in(0,\kappa_0]}
\sup\limits_{\omega\in\Omega^0,t\in[0,T]}|K^{\widetilde{u}_\e}(t)(\omega)|<\infty.
\end{align}

Recall the constant $\e_2$ appearing in (\ref{eq xianz ep}). Set
$\e_3=\e_2\wedge\kappa_0.$ By the mean value theorem and (B2), for any $\epsilon\in(0,\e_3]$, there exists $\theta_\e(s)\in[0,1]$ such that
{\small
\begin{align*}
&I_{1,1}(t)\\
:=&
\int_0^{t\wedge\tau_\e^j}\<\frac{b(s,a(\e)M^{u_\e}(s)+X^0(s),Law(X^0(s)))-b(s,X^0(s),Law(X^0(s)))}{a(\e)}\\
&\ \ \ \ \ \
-b'_2(s,X^0(s),Law(X^0(s)))M^{u_\e}(s),Q^\e(s)\>\dif s\cr
\leq&
\int_0^{t\wedge\tau_\e^j}\!\!\!\!|b'_2(s,a(\e)M^{u_\e}(s)\theta_\e(s)\!
+\!X^0(s),Law(X^0(s)))\!-\!b'_2(s,X^0(s),Law(X^0(s)))||M^{u_\e}(s)||Q^\e(s)|\dif s\cr
\leq&
La(\e)\int_0^{t\wedge\tau_\e^j}(1+|a(\e)M^{u_\e}(s)\theta_\e(s)+X^0(s)|^{q'}+|X^0(s)|^{q'})
|M^{u_\e}(s)|^2|Q^\e(s)|\dif s\\
\leq&
C_ja(\e),
\end{align*}
}
where
$$
C_j=L\Big(1+|j+\sup_{s\in[0,T]}|X^0(s)||^{q'}+\sup_{s\in[0,T]}|X^0(s)|^{q'}\Big)j^2(j+\kappa)T,
$$
which is independent of $\e$.

In the sequel, $C_j$ will denote generic  constants which are  independent of $\e$.

\vskip 0.2cm
Hence, by Lemma \ref{yey0}, (A1) and the above inequality, for any $\epsilon\in(0,\e_3]$,
{\small
\begin{align}\label{I-1}
&I_1(t)\nonumber\\
=&2I_{1,1}(t)\nonumber\\
&+2\int_0^{t\wedge\tau_\e^j}\!\!\!\!\<\frac{b_\e(s,a(\e)M^{u_\e}(s)+X^0(s),Law(X^\e(s)))
-b(s,a(\e)M^{u_\e}(s)+X^0(s),Law(X^\e(s)))}{a(\e)}, Q^\e(s)\>\dif s\nonumber\\
&+2\int_0^{t\wedge\tau_\e^j}\!\!\!\!\<b'_2(s,X^0(s),Law(X^0(s)))M^{u_\e}(s)
-b'_2(s,X^0(s),Law(X^0(s)))K^{\widetilde{u}_\e}(s),Q^\e(s)\>\dif s\nonumber\\
&+2\int_0^{t\wedge\tau_\e^j}\!\!\!\<\frac{b(s,a(\e)M^{u_\e}(s)+X^0(s),Law(X^\e(s)))
-b(s,a(\e)M^{u_\e}(s)+X^0(s),Law(X^0(s)))}{a(\e)},Q^\e(s)\>\dif s\nonumber\\
\leq&
2\frac{\varrho_{b,\e}}{a(\e)}\int_0^{t\wedge\tau_\e^j}|Q^\e(s)|\dif s
+
\frac{2L}{a(\e)}\int_0^{t\wedge\tau_\e^j}(\mE|X^\e(s)-X^0(s)|^2)^{\frac{1}{2}}|Q^\e(s)|\dif s\nonumber\\
&
+C_ja(\e)+2\int_0^t|b'_2(s\wedge\tau_\e^j,X^0(s\wedge\tau_\e^j),Law(X^0(s\wedge\tau_\e^j)))|
|Q^\e({s\wedge\tau_\e^j})|^2\dif s\nonumber\\
\leq&
C_j\left(\frac{\varrho_{b,\e}}{a(\e)}+\frac{(\e+\varrho_{b,\e}^2+\e\varrho_{\sigma,\e}^2
+\e\varrho_{G,\e}^2)^{\frac{1}{2}}}{a(\e)}+a(\e)\right)\nonumber\\
&+
2\int_0^t|b'_2(s\wedge\tau_\e^j,X^0(s\wedge\tau_\e^j),Law(X^0(s\wedge\tau_\e^j)))|
|Q^\e({s\wedge\tau_\e^j})|^2\dif s.
\end{align}
}
\vskip 0.2cm
Inserting the inequality (\ref{I-1}) into (\ref{F-Y}), and using Gronwall's inequality,
we deduce that, for any $\epsilon\in(0,\e_3]$ and $t\in[0,T]$,
\begin{align}\label{eq F-Y 1}
&\sup_{t\in[0,T]}|Q^\e(t\wedge\tau_\e^j)|^2\nonumber\\
\leq&\exp\Big\{2\int_0^T|b'_2(s,X^0(s),Law(X^0(s)))|\dif s\Big\}\nonumber\\
&\times\left\{C_j\big(\frac{\varrho_{b,\e}}{a(\e)}+\frac{(\e+\varrho_{b,\e}^2+\e\varrho_{\sigma,\e}^2
+\e\varrho_{G,\e}^2)^{\frac{1}{2}}}{a(\e)}+a(\e)\big)
+\sum_{i=2}^7\sup_{s\in[0,T]}|I_i(s)|\right\}.
\end{align}
Set
$$
\Lambda:=\exp\Big\{2\int_0^T|b'_2(s,X^0(s),Law(X^0(s)))|\dif s\Big\}.
$$
By Burkholder-Davis-Gundy's inequality and (\ref{eq Ct}), using similar arguments as in the proofs of  (\ref{I3}) and (\ref{I2}), one can obtain for each $\e\in(0,\e_3]$,
\begin{align}\label{eq I2-I3}
&\Lambda\Big(\mathbb{E}\Big(\sup_{t\in[0,T]}|I_2(t)|\Big)
+\mathbb{E}\Big(\sup_{t\in[0,T]}|I_3(t)|\Big)\Big)\nonumber\\
\leq&
   \frac{1}{10} \mathbb{E}\Big(\sup_{t\in[0,T]}|Q^\e(t\wedge\tau_\e^j)|^2\Big)\nonumber\\
&+
  \frac{C\e}{a^2(\e)}\mathbb{E}\Big(\int_0^{T\wedge\tau_\e^j}\|
                            \sigma_\e(s,a(\e)M^{u_\e}(s)+X^0(s),Law(X^\e(s)))\|^2_{\mathcal{L}_2}\dif  s\Big)\nonumber\\
\leq&
   \frac{1}{10} \mathbb{E}\Big(\sup_{t\in[0,T]}|Q^\e(t\wedge\tau_\e^j)|^2\Big)
   +
   \frac{C\e \varrho_{\sigma,\e}^2}{a^2(\e)}
   \nonumber\\
&+
  \frac{C\e}{a^2(\e)}\mathbb{E}\Big(\int_0^{T\wedge\tau_\e^j}
  |M^{u_\e}(s)|^2+\mathbb{E}(|X^\e(s)-X^0(s)|^2)\dif  s\Big)\nonumber\\
&+
  \frac{C\e}{a^2(\e)}\int_0^{T}\|
                            \sigma(s,X^0(s),Law(X^0(s)))\|^2_{\mathcal{L}_2}\dif  s\Big)\nonumber\\
\leq&
   \frac{1}{10} \mathbb{E}\Big(\sup_{t\in[0,T]}|Q^\e(t\wedge\tau_\e^j)|^2\Big)\nonumber\\
&+
 C_j\frac{\e}{a^2(\e)}(1+\varrho_{\sigma,\e}^2+\e+\varrho_{b,\e}^2
 +\e\varrho_{\sigma,\e}^2+\e\varrho_{G,\e}^2).
\end{align}

By (A1), (\ref{eq 3.82}) and Lemma \ref{yey0}, remembering that $\phi_\e\in \mathcal{S}^m_1$ , we have for any $\epsilon\in(0,\e_3]$,
\begin{align}\label{I-4}
&\Lambda\mathbb{E}\Big(\sup_{t\in[0,T]}|I_4(t)|\Big)\nonumber\\
\leq&
C\varrho_{\sigma,\e}\mathbb{E}\int_0^{T\wedge\tau_\e^j}|\phi_\e(s)||Q^\e(s)|\dif s\nonumber\\
&+
C\mathbb{E}\int_0^{T\wedge\tau_\e^j}(|a(\e)M^{u_\e}(s)|+\mW_2(Law(X^\e(s)),Law(X^0(s))))
|\phi_\e(s)||Q^\e(s)|\dif s\nonumber\\
\leq&C_j(\varrho_{\sigma,\e}+a(\e)+(\e+\varrho_{b,\e}^2
+\e\varrho_{\sigma,\e}^2+\e\varrho_{G,\e}^2)^{\frac{1}{2}})
\mathbb{E}\Big(
\int_0^T|\phi_\e(s)|^2\dif s
\Big)^{\frac{1}{2}}\nonumber\\
\leq&C_j(\varrho_{\sigma,\e}+a(\e)+(\e+\varrho_{b,\e}^2
+\e\varrho_{\sigma,\e}^2+\e\varrho_{G,\e}^2)^{\frac{1}{2}}).
\end{align}

By Burkholder-Davis-Gundy's inequality, (\ref{eq lem 4.2}) and (\ref{eq xianz ep}), using similar arguments as in the proof of (\ref{I6-5}), one has for $\forall \e\in(0,\e_3]$,
\begin{align}\label{eq I5-6}
&\Lambda\Big(\mathbb{E}\Big(\sup_{t\in[0,T]}|I_5(t)|\Big)
+\mathbb{E}\Big(\sup_{t\in[0,T]}|I_6(t)|\Big)\Big)\nonumber\\
\leq&
   \frac{1}{10} \mathbb{E}\Big(\sup_{t\in[0,T]}|Q^\e(t\wedge\tau_\e^j)|^2\Big)\nonumber\\
&+\frac{C\e}{a^2(\e)}\mathbb{E}\Big(\int_0^{T\wedge\tau_\e^j}\!\!\int_Z
   |G_\e(s,a(\e)M^{u_\e}(s)+X^0(s),Law(X^\e(s)),z)|^2\psi_\e(s,z)\nu(\dif z)\dif s\Big)\cr
\leq&
   \frac{1}{10}\mathbb{E}\Big(\sup_{t\in[0,T]}|Q^\e(t\wedge\tau_\e^j)|^2\Big)
   +
   \frac{C\e\varrho_{G,\e}^2}{a^2(\e)}\mathbb{E}\Big(\int_0^{T\wedge\tau_\e^j}
   \!\!\!\!\int_ZL_3^2(z)\psi_\e(s,z)\nu(\dif z)\dif s\Big)\cr
&+
   \frac{C\e}{a^2(\e)}\mathbb{E}\Big(\int_0^{T\wedge\tau_\e^j}
   \!\!\!\!\int_Z|G(s,0,\delta_0,z)|^2\psi_\e(s,z)\nu(\dif z)\dif s\Big)\cr
&+
\frac{C\e}{a^2(\e)}\mathbb{E}\Big(\int_0^{T\wedge\tau_\e^j}\!\!\!\!
\int_Z\!\!\Big(|a(\e)M^{u_\e}(s)+X^0(s)|^2+\mathbb{E}(|X^\e(s)|^2)\Big)
L^2_1(z)\psi_\e(s,z)\nu(\dif z)\dif s\Big)\cr
\leq&
   \frac{1}{10} \mathbb{E}\Big(\sup_{t\in[0,T]}|Q^\e(t\wedge\tau_\e^j)|^2\Big)\nonumber\\
&+
\frac{C_j\e}{a^2(\e)}\sup_{\psi\in S^m_{+,\e}}\Big(\int_0^T\!\!\!\int_Z\!\!\Big(L^2_1(z)+L^2_2(z)+L^2_3(z)\Big)\psi(s,z)\nu(\dif z)\dif s\Big)\cr
\leq&
   \frac{1}{10} \mathbb{E}\Big(\sup_{t\in[0,T]}|Q^\e(t\wedge\tau_\e^j)|^2\Big)
+
\frac{C_j\e}{a^2(\e)}.
\end{align}

Note that
\begin{align*}
&I_7(t)\\
=&
2\int_0^{t\wedge\tau_\e^j}\!\!\!\!\!\!\int_Z\
\langle \big(G_\e(s,a(\e)M^{u_\e}(s)+X^0(s),Law(X^\e(s)),z)\cr
   &\ \ \ \ \ \ \ \ \ \ \ \ \ \ \ \ -
   G(s,a(\e)M^{u_\e}(s)+X^0(s),Law(X^\e(s)),z)\big)\varphi_\e(s,z),
  Q^\e(s)\rangle\nu(\dif z)\dif s\cr
+&
2\int_0^{t\wedge\tau_\e^j}\!\!\!\!\!\!\int_Z\!\!
   \langle\big(G(s,a(\e)M^{u_\e}(s)+X^0(s),Law(X^\e(s)),z)\cr
   &\ \ \ \ \ \ \ \ \ \ \ \ \ \ \ \ -
   G(s,X^0(s),Law(X^0(s)),z)\big)\varphi_\e(s,z), Q^\e(s)\rangle\nu(\dif z)\dif s\cr
+&
2\int_0^{t\wedge\tau_\e^j}\!\!\!\!\!\int_Z\!\!
   \langle
   \big(G(s,X^0(s),Law(X^0(s)),z)\!-\!G(s,0,\delta_0,z)\big)
   \varphi_\e(s,z)1_{\{|\varphi_\e|>\beta/a(\e)\}}(s,z),Q^\e(s)\rangle\nu(\dif z)\dif s\cr
+&
2\int_0^{t\wedge\tau_\e^j}\!\!\!\!\int_Z\!\!
   \langle
   G(s,0,\delta_0,z)\varphi_\e(s,z)1_{\{|\varphi_\e|>\beta/a(\e)\}}(s,z),
Q^\e(s)\rangle\nu(\dif z)\dif s.
\end{align*}
Hence, by (B1), (\ref{eq 3.82}), Remark \ref{rem W2} and Lemma \ref{yey0},
\begin{align}\label{I-7-1}
&\Lambda\mathbb{E}\Big(\sup_{t\in[0,T]}|I_7(t)|\Big)\nonumber\\
\leq&
C\varrho_{G,\e}\mathbb{E}\Big(
\int_0^{T\wedge\tau_\e^j}\!\!\!\int_ZL_3(z)|Q^\e(s)||\varphi_\e(s,z)|\nu(\dif z)\dif s
\Big)\cr
&+
C\mathbb{E}\Big(
\int_0^{T\wedge\tau_\e^j}\!\!\!\int_ZL_1(z)
\left(|a(\e)M^{u_\e}(s)|+\Big(\mathbb{E}(X^\e(s)-X^0(s))^2\Big)^{\frac{1}{2}}\right)
|Q^\e(s)||\varphi_\e(s,z)|\nu(\dif z)\dif s\Big)\cr
&
+
C\mathbb{E}\Big(
\int_0^{t\wedge\tau_\e^j}\!\!\!\int_ZL_1(z)|X^0(s)|
|Q^\e(s)||\varphi_\e(s,z)|1_{\{|\varphi_\e|>\beta/a(\e)\}}(s,z)\nu(\dif z)\dif s
\Big)\cr
&
+
C\mathbb{E}\Big(
\int_0^{t\wedge\tau_\e^j}\!\!\!\int_Z|Q^\e(s)|L_2(z)
|\varphi_\e(s,z)|1_{\{|\varphi_\e|>\beta/a(\e)\}}(s,z)\nu(\dif z)\dif s
\Big)\cr
\leq&
C_j(\varrho_{G,\e}+a(\e)+(\e+\varrho_{b,\e}^2+\e\varrho_{\sigma,\e}^2
+\e\varrho_{G,\e}^2)^{\frac{1}{2}})\sup_{\varphi\in S^m_{\e}}\int_0^{T}\!\!\!\int_Z(L_1(z)+L_3(z))
|\varphi(s,z)|\nu(\dif z)\dif s\cr
&
+
C_j\sup_{\varphi\in S^m_{\e}}\int_0^{T}\!\!\!\int_Z\Big(L_1(z)+L_2(z)\Big)
|\varphi(s,z)|1_{\{|\varphi(s,z)|>\frac{\beta}{a(\e)}\}}(s,z)\nu(\dif z)\dif s\cr
\leq&
C_j(\varrho_{G,\e}+a(\e)+(\e+\varrho_{b,\e}^2+\e\varrho_{\sigma,\e}^2
+\e\varrho_{G,\e}^2)^{\frac{1}{2}})\cr
&+
C_j\sup_{\varphi\in S^m_{\e}}\int_0^{T}\!\!\!\int_Z\Big(L_1(z)+L_2(z)\Big)
|\varphi(s,z)|1_{\{|\varphi(s,z)|>\frac{\beta}{a(\e)}\}}(s,z)\nu(\dif z)\dif s.
\end{align}

By combining (\ref{eq xianz ep}) and (\ref{F-Y})-(\ref{I-7-1}) together, we obtain for any $\epsilon\in(0,\e_3]$,
\begin{align}\label{eq F-Y 4}
&\frac{8}{10}\mathbb{E}\Big(\sup_{t\in[0,T]}|Q^\e_{t\wedge\tau_\e^j}|^2\Big)\nonumber\\
\leq&
C_j\Big\{\varrho_{G,\e}+\varrho_{\sigma,\e}+\frac{\varrho_{b,\e}}{a(\e)}
+\frac{(\e+\varrho_{b,\e}^2+\e\varrho_{\sigma,\e}^2
+\e\varrho_{G,\e}^2)^{\frac{1}{2}}}{a(\e)}
+\frac{\e}{a^2(\e)}\\
&+a(\e)+(\e+\varrho_{b,\e}^2
+\e\varrho_{\sigma,\e}^2+\e\varrho_{G,\e}^2)^{\frac{1}{2}}\nonumber\\
&+\sup_{\varphi\in S^m_{\e}}\int_0^{T}\!\!\!\int_Z\big(L_1(z)+L_2(z)\big)
|\varphi(s,z)|1_{\{|\varphi(s,z)|>\frac{\beta}{a(\e)}\}}(s,z)\nu(\dif z)\dif s\Big\}.\nonumber
\end{align}

In view of (B3), (\ref{eq a ep}) and (\ref{eq lem 4.7}) it follows that
\begin{align}\label{eq F-Y 3}
\lim_{\e\rightarrow 0}\mathbb{E}\Big(\sup_{t\in[0,T]}|M^{u_\e}(t\wedge\tau_\e^j)-K^{\widetilde{u}_\e}(t\wedge\tau_\e^j)|^2\Big)
=\lim_{\e\rightarrow 0}\mathbb{E}\Big(\sup_{t\in[0,T]}|Q^\e(t\wedge\tau_\e^j)|^2\Big)=
0.
\end{align}
\vskip 0.3cm

Now for any $\varpi>0$, $\epsilon\in(0,\e_3]$ and $j\in\mathbb{N}$,  we have
\begin{align*}
&P\Big(\sup\limits_{t\in[0,T]}|M^{u_\e}(t)-K^{\widetilde{u}_\e}(t)|\geq \varpi\Big)\\
\leq&
  P\left(
     \Big(\sup\limits_{t\in[0,T]}|M^{u_\e}(t\wedge\tau_\e^j)-
       K^{\widetilde{u}_\e}(t\wedge\tau_\e^j)|\geq \varpi\Big)
      \bigcap
       \Big(
            \tau_\e^j\geq T
        \Big)
   \right)
  +
   P(\tau_\e^j< T)\cr
\leq&\frac{1}{\varpi^2}\mE\Big(\sup\limits_{t\in[0,T]}
|M^{u_\e}(t\wedge\tau_\e^j)-K^{\widetilde{u}_\e}(t\wedge\tau_\e^j)|^2\Big)
+
\frac{C}{j^2}.
\end{align*}
By letting $\varepsilon\rightarrow 0$ first and then $j\rightarrow \infty$,  we get
\begin{align*}
\lim\limits_{\e\rightarrow0}
P\left(\sup\limits_{t\in[0,T]}|M^{u_\e}(t)-K^{\widetilde{u}_\e}(t)|\geq \varpi\right)=0,
\end{align*}
which is the desired result.
\end{proof}

\end{document}